\newtheorem{thm}{Theorem}[section]
\newtheorem{cor}[thm]{Corollary}
\newtheorem{lemma}[thm]{Lemma}
\newtheorem{lem}[thm]{Lemma}
\newtheorem{prop}[thm]{Proposition}
\numberwithin{equation}{section}
\newcommand{\cA}{\mathcal{A}}
\newcommand{\cG}{\mathcal{G}}
\newcommand{\cI}{\mathcal{I}}
\newcommand{\cB}{\mathcal{B}}
\newcommand{\cS}{\mathcal{S}}
\newcommand{\bZ}{\mathbb{Z}}
\newcommand{\bN}{\mathbb{N}}
\newcommand{\bR}{\mathbb{R}}
\newcommand{\bC}{\mathbb{C}}
\newcommand{\bF}{\mathbb{F}}
\newcommand{\rar}{\rightarrow}
\newcommand\up[1]{^{(#1)}}
\newcommand\BB[1]{\mathbb{#1}}
\newcommand\CAL[1]{\mathcal{#1}}
\newcommand\TR[1]{\mathop{\mathrm{tr}}{#1}}
\newcommand\TRQS[2]{{{\mathrm{tr}}_{#1}}{#2}}
\newcommand{\DSYM}{d_{\mathrm{sym}}}
\newcommand{\SYM}{\mathrm{Sym}}
\newcommand{\ip}[2]{\left\langle#1,#2\right\rangle}
\newcommand{\norm}[1]{\left\Vert#1\right\Vert}
\newcommand{\abs}[1]{\left\vert#1\right\vert}
\newcommand{\absip}[2]{\left\vert\langle#1,#2\rangle\right\vert}
\newcommand{\lspan}{\operatorname{span}}
\newcommand{\beq}{\begin{equation}}
\newcommand{\eeq}{\end{equation}}
\providecommand{\tr}{\mathop{\rm tr} \nolimits}
\providecommand{\spark}{\mathop{\rm spark} \nolimits}
\providecommand{\argmin}{\mathop{\rm arg\, min}}
\providecommand{\TP}{\mathop{\rm TP} \nolimits}
\providecommand{\diag}{\mathop{\rm diag} \nolimits}
\providecommand{\Sym}{\mathop{\rm Sym} \nolimits}
\newcommand{\ejk}[1]{{{#1}}}
\newcommand{\bgb}[1]{{{#1}}}
\title[Optimal arrangements of classical \& quantum states]%
{Optimal arrangements of classical and quantum states\\ with limited purity}%
\author{B.\ G.\ Bodmann and E.\ J.\ King}
\begin{document}
\maketitle

    \begin{abstract}
We consider sets of trace-normalized non-negative operators in Hilbert-Schmidt balls that maximize their mutual Hilbert-Schmidt distance; these are optimal arrangements in the sets of purity-limited classical or quantum states on a finite-dimensional Hilbert space. Classical states are understood to  be represented by diagonal matrices, with the diagonal entries forming a probability vector. We also introduce the concept of spectrahedron arrangements which provides a unified framework for classical and quantum arrangements and the flexibility to define new types of optimal packings. Continuing a prior work, we combine combinatorial structures and line packings associated with frames to arrive at optimal arrangements of higher-rank quantum states.
One new construction that is presented involves generating an optimal arrangement we call a Gabor-Steiner equiangular tight frame as the orbit of a projective representation of the Weyl-Heisenberg group over any finite abelian group.  The minimal sets of linearly dependent vectors, the so-called binder, of the Gabor-Steiner equiangular tight frames are then characterized; under certain conditions these form combinatorial block designs and in one case generate a new class of block designs.  The projections onto the span of minimal linearly dependent sets in the Gabor-Steiner equiangular tight frame are then used to generate further optimal spectrahedron arrangements.
\end{abstract}

\section{Introduction}

The present paper investigates how packings of quantum states relate to their
classical counterpart. A classical packing problem is, for example, the selection of
a number of vertices in the Boolean cube such that the minimum number of edges between 
any pair of selected vertices is maximized \cite{BBFKKW}. The mutual distance has practical relevance when the 
points in the initial
selection are distorted, for example through noise in a communication system. 
Sets of points maximizing their mutual distance are also known as maximum-distance separable 
codes  \cite{BBFKKW}.
A similar problem is the selection of a number of points on a sphere such that
their respective minimum distance is maximized. Points realizing this distance
are called spherical codes \cite{BargMusin07,BachocVallentin09}.

When classical information is stored or transmitted in the form of a signaling set of quantum states \cite{Holevo98}, then 
there is an intrinsic source of uncertainty that is not related to an external  source
of noise coming from an environment. 
States are modeled by trace-normalized positive semidefinite operators on a finite-dimensional Hilbert space. 
A set of quantum states can only be 
distinguished reliably when they are orthogonal with respect to the Hilbert-Schmidt inner product. When the number of signaling states
grows beyond the dimension of the Hilbert space,
they may not be identified without allowing 
the possibility of an error \cite{Peres1988,scott2006tight}.  In that situation, one may wish to minimize
the probability of confusing one state with another in the signaling set, which amounts to minimizing the pairwise Hilbert-Schmidt inner
products. When the choice of states is unrestricted, this is equivalent to maximizing the mutual distance given by the Hilbert-Schmidt norm.
Although there is a conceptual issue with this as a distance between states \cite{Ozawa00} because a quantum channel may increase the distance between specific pairs of states,
we adopt it here because of its intuitive geometric, Euclidean character. A more refined analysis has been made based on hypothesis testing and 
smoothed conditional entropies, see \cite{DST14} and references therein.

When quantum states are used for information storage \bgb{or transmission}, they are produced by a device, which is modeled by a quantum channel. In recent
years, the maximum output purity has been discussed to characterize the performance 
of a channel \cite{ZanardiLidar04,AFKW04,King18}. If pure states cannot be realized by a channel, then one may ask what
the best packing is one can realize with a given number of quantum states
that have limited purity. This is precisely the question addressed hereafter.
In more concrete mathematical terms, we characterize
sets of trace-normalized non-negative operators in a Hilbert-Schmidt ball of a given radius
that maximize their mutual Hilbert-Schmidt distance.
The radius of the ball bounds the purity of a state. Classical states are understood to 
be represented by diagonal matrices, with the diagonal entries forming a probability vector.
We note that limiting the purity of a probability vector may be less common than for quantum states,
but it is instructive to compare packings of classical and quantum states on the same statistical footing. To unify the treatment, we introduce the concept of spectrahedron arrangements.
As special cases, spectrahedra include the set of trace-normalized positive semidefinite operators or the subset of diagonal matrices whose diagonal entries form probability vectors.

Earlier results on subspace packings~\cite{GrassPack,BodmannHaas18} can be regarded as non-convex optimization problems arising from restricting 
the larger search space of quantum states with a purity limit to multiples of orthogonal projections, a non-convex subset.
The results included here show cases in which optimal packings in the larger search space consist of extreme points, elements of minimal rank
that are multiples of orthogonal projections. Hence, we can deduce optimal subspace packings from our construction of quantum state arrangements.
This is also true when the states are restricted to quantum states obtained from specific group orbits, in which case the corresponding 
subspace packing
is optimal when only subspaces in the orbit under the group operation are included in the optimization.
The group actions we consider are the unitary group acting on quantum states, the permutation group
acting on classical states, or a projective unitary representation relating to the Heisenberg-Weyl group acting on a subset of quantum states.
\bgb{The optimal subspace packings resulting from our construction also have convenient properties when they are considered
as designs \cite{Zauner1999}, for example as tight or even equi-chordal tight fusion frames \cite{PKC08,BE13,FJMW17}.} 

The special case of 
optimal
arrangements of rank-one projections may be viewed as collections of unit vectors whose maximal inner product is as small as possible.  Examples of such optimal arrangements include equiangular tight frames in harmonic analysis and mutually unbiased bases and symmetric informationally complete positive operator valued measures in quantum information theory (see, for example,~\cite{Waldron18}) \bgb{or other arrangements that augment certain equiangular tight frames with an orthonormal basis \cite{BodmannHaas16}}.  A key idea of this paper is to leverage combinatorial designs to create new optimal arrangements from optimal arrangements of rank one projections, generalizing an approach in~\cite{BodmannHaas18}. In some cases, these combinatorial designs arise from the structure of embedded simplices, which are minimal linearly dependent sets in an equiangular tight frame.  This structure is called a binder and was introduced in~\cite{FJKM17}.  In order to go beyond arrangements based on mutually unbiased bases, we construct an infinite class of group covariant frames which  we call Gabor-Steiner equiangular tight frames which are also exactly (not just unitarily equivalent to those) created using the Steiner equiangular tight frame construction~\cite{FMT12}, thus have vector support corresponding to a combinatorial block design.
These frames are interesting in their own right.
 In certain cases, they are equivalent to known indirect constructions which leverage combinatorial designs or group actions but not both \cite{BoEl10a,BoEl10b,IJM17,FJMPW17}.  We also provide the first complete characterization of the binders of an infinite class of equiangular tight frames.

The present  paper is organized as follows:

Section~\ref{sec:prelim} contains background information about classical and quantum state arrangements.  Classical and quantum states are both examples of spectrahedra, and in Definition~\ref{def:specarr} we introduce the new concept of spectrahedron arrangements.  After some statements generalizing known packing results to the spectrahedron arrangement case (e.g., Theorem~\ref{thm:rankin}), we recall facts from combinatorial design theory, in particular balanced incomplete block designs. Among the results that we discuss in the classical setting are balanced incomplete block designs
as the support set of distance maximizers. This was realized in a prior paper in a special case in  which probability vectors had a support that was half the number of outcomes/vertices~\cite{BodmannHaas16}.  Here, in Corollary~\ref{cor:affinedesigninnerproduct} we deduce that a family of designs on   $q$  vertices, where $q$ is a prime power, 
$n=q(q^m-1)/(q-1)$ blocks of size $k=q^{m-1}$, intersecting in at most
$q^{m-1}$ vertices gives rise to $n$ probability vectors that saturate a bound by Rankin for packings  in a simplex while purity is limited by $1/k$. 


Mutually unbiased bases are introduced in Section~\ref{sec:MUB}. We use these
optimal packings of $q(q+1)$ rank one projections in $\bC^q$, where $q$ is a prime power
to increase the packing density relative to the classical setting. In Theorem~\ref{thm:MUBpack}, we obtain  $n=(q^m+1) \frac{q(q^m-1)}{q-1}$  quantum states with purity $q^{1-m}$ and the Hilbert-Schmidt distance between any two quantum states is bounded in the same way as in the packing of (classical) probability vectors.

In Section~\ref{sec:gabor}, a construction of an infinite class of equiangular tight frames which are generated as the orbit of a projective unitary representation over the Heisenberg-Weyl group over any finite abelian group of odd order is presented.  It is shown in Theorem~\ref{thm:Steiner} that the support set of the vectors corresponds to a balanced incomplete block design, making these equiangular tight frames the first class of group covariant frames which are also Steiner equiangular tight frames.  We thus call the generated frames Gabor-Steiner equiangular tight frames.

The binders of the Gabor-Steiner equiangular tight frames are characterized in Section~\ref{sec:BIBD} and \bgb{later} used to create spectrahedron arrangements. In particular, it is shown in Theorems~\ref{thm:AG}, ~\ref{thm:bibdsets}, and~\ref{thm:AG2} that if the underlying group used to generate the Gabor-Steiner equiangular tight frame is $\bZ_p$ or $\bZ_p \times \bZ_p$ for $p$ an odd prime, then the binders are balanced incomplete block designs.  In the former case, the designs are affine geometries but in the latter case, the designs seem to be a new class (Theorem~\ref{thm:bibddecomp}). 

\bgb{In Section~\ref{sec:SAfromB}, we leverage the binder structure from the preceding section} to create two different types of optimal arrangements. For example, one result of Theorem~\ref{prop:ff1} is that we can use a subset of the simplices in a binder to create for all odd integers $m$ a set of quantum states of purity $(m-1)^{-1}$ in a $m(m-1)/2$-dimensional complex Hilbert space that have a constant distance from each other; that is, they are equichordal.  By considering the projections onto the entire binders of the Gabor-Steiner equiangular tight frames corresponding to $\bZ_p$ or $\bZ_p \times \bZ_p$, we construct other spectrahedron arrangements (Corollaries~\ref{cor:ff2} and~\ref{cor:ff3}) which in the former case are optimal.

Finally, in Theorem~\ref{thm:gabMUB} in Section~\ref{sec:BinderMUB} we relate
maximal sets of mutually unbiased bases and a class of non-maximal equiangular tight frames.

\section{Preliminaries}\label{sec:prelim}

Let $\BB F$ be the field of real or complex numbers, $\BB R $ or $\BB C$. We equip the $d$-dimensional vector space $\BB{F}^d$ with the canonical
inner product to make it a Hilbert space. The standard orthonormal basis $\{e_i\}_{i=1}^d$ is the set of vectors with entries given by
Kronecker's $\delta$-symbol, $(e_i)_j = \delta_{i,j}$. The matrix units in $\BB{F}^{d \times d}$ are $d\times d$ matrices $\{E_{i,j}\}_{i,j=1}^n$ whose entries
are $(E_{i,j})_{l,m} = \delta_{i,l} \delta_{j,m}$ also written as $E_{i,j} = e_i \otimes e_j^*$.

\begin{defn}\label{def:states}
The space of \textit{quantum states} on $\BB{F}^d$ is the space of trace-normalized positive semidefinite matrices, $\{W \in \BB{F}^{d\times d}:
\TR{W}=1, W \succeq 0\}$. When speaking of \textit{classical states}, we mean the subset of diagonal quantum states, 
$\mathcal D = \{D \in \BB{F}^{d\times d}: \tr{D} = 1, D \succeq 0, 
D = \sum_{i=1}^d E_{i,i} D E_{i,i}\}$. 
For a classical or quantum state $W$, its \textit{purity} is the quantity $\TR{W^2}$. A state
given by a rank one projection $P$ is called \textit{pure}.
A set of states $\{W_j\}_{j=1}^n$ is called \textit{coherent} if its average equals the \textit{maximally mixed state}
$$
    \frac{1}{n} \sum_{j=1}^n W_j = \frac{1}{d} I \, .
$$
Given a group $G$ acting on $\BB{F}^{d}$ by a representation $\rho: G \to \BB{F}^{d\times d}$, we say that a subset of states $\mathcal W$ is \textit{$G$-invariant} 
if for each $W \in \mathcal W$ and $g \in G$, $\rho(g) W \rho(g^{-1}) \in \mathcal W$.
\end{defn}

If we consider the set of all quantum states, then this is invariant under conjugating with unitaries. The set of diagonal matrices is invariant under conjugation with permutation
matrices, and so are the classical states. In both cases these groups act irreducibly on $\mathbb{F}^d$. By versions of Schur's lemma,
if a set of states $\mathcal W = \{W_j\}_{j=1}^n$ is invariant under the orthogonal or unitary matrices, then
it is coherent. This property allows one to express the value of the trace of any symmetric or Hermitian matrix in terms 
of the average of the Hilbert-Schmidt inner products with the quantum states. Zauner
viewed this property as that of a (quantum) 1-design, in analogy with combinatorial design theory.
He also studied higher-degree design properties. A quantum $t$-design with respect to $\rho$ is present when $\sum_{j=1}^n \otimes_{j=1}^t W_j$
is invariant under conjugation with $\otimes_{j=1}^t \rho(g)$ \cite{Zauner1999,Zauner2011}. 

Instead of averaging properties, in this paper we focus on packing of quantum states,
in close analogy with codes in  the combinatorics literature. Although there is at first no direct connection between
the definition of codes and the averaging properties of designs, there is an interplay between the two properties,
especially when codes of maximal size or designs of minimal size are concerned~\cite{Levenshtein1992}.   A code is, in brief, a set of states that maximizes the minimal distance between any two elements. We choose the chordal distance as the relevant metric.

\begin{defn} For a pair of quantum states $W$ and $W'$, we define the \textit{chordal distance} $d_c(W,W')=(\TR{(W-W')^2})^{1/2}/\sqrt 2$.
An \textit{arrangement of quantum states} is a subset $\mathcal{W} = \{W_j\}_{j=1}^n$ of quantum states. We say that this set is an \textit{optimal 
packing of purity $\gamma$} if $\TR{W_j^2} \le \gamma$ for each $j \in \{1, 2, \dots, n\}$ and among all choices of $n$ such states, $\mathcal W$
maximizes the minimal distance occurring between any pair $W,W' \in \mathcal{W}$. 
\end{defn}

The chordal distance is, up to a normalization factor, a natural metric induced by the Hilbert-Schmidt norm when
states are embedded in the real Hilbert space $\SYM_d(\mathbb{F})$ of symmetric/Hermitian $d\times d$ matrices over $\BB{F}$.
For convenience, we abbreviate the dimension of this Hilbert space
as
 $$\DSYM = \left\{ \begin{array}{cc} d(d+1)/2, & \BB{F}= \BB{R} \\  d^2 , & \BB{F} =\BB{C} \end{array} \right. .$$

By the definition of chordal distance, we have for two quantum states $W, W'$ with purity at most $\gamma$ 
the bound
$$
   d_c(W,W')^2 = (\TR{W^2} - 2 \TR{W W'} + \TR{(W')^2} )/2 \le  \gamma -  \TR{W W'} \, . 
$$
Hence, if $\mathcal W$ saturates a lower bound for $\max_{W,W' \in \mathcal{W},W \ne W'} \TR{W W'}$
while $\TR{W^2}=\gamma$ for each $W \in \mathcal{W}$, then an optimal packing is achieved. In particular,
if $\gamma=1$, then a set of rank-one projections $\mathcal{P}=\{P_j\}$ minimizing $\max_{P, P' \in \mathcal{P} }
\TR{P P'}$ is optimal. The problem of finding optimal line packings, the subspaces associated with the rank-one projections,  has a long history
in real or complex Hilbert spaces \cite{Tammes30,ConwaySloane98}, see also \cite{Waldron18}.

In this paper, we wish to address the packing problem for states with a purity limit $\gamma \in [1/d,1]$.  Hence, we allow the purity limit to range between that of the maximally mixed state and a pure state. In addition, we choose groups other than the unitaries
and the permutations and corresponding convex subsets of quantum states that are invariant under these groups to pose and study packing problems.  The most general type of packing problem
covered here is to find states within a given spectrahedron, a type of set appearing in statistics, graph theory \cite{GoWi1995}, and quantum information theory \cite{Weis2011}.

\begin{defn}\label{def:spec}
A \textit{spectrahedron}  $\mathcal S$ in $\SYM_d(\mathbb{F})$ is the intersection of 
a real, affine-linear space of symmetric/Hermitian $d\times d$ matrices with the convex cone of positive 
semidefinite matrices. If $\mathcal S$ contains all positive semidefinite matrices with unit trace,
it is called a \textit{spectraplex} \cite{GaMa2012}. If $\mathcal S$ is a spectrahedron containing a positive multiple of the identity 
in its interior then we call it \textit{monic} \cite{EHKM2018}.
When referring to the dimension of a spectrahedron, we mean the (real) dimension
of the affine subspace in $\SYM_d(\mathbb{F})$.
\end{defn}

\begin{ex}
The trace-normalized elliptope $\mathcal{E}_3$ consists of all the real $3\times 3$ matrices
$$
   X = \left( \begin{array}{ccc} 1/3 & x & y\\ x & 1/3 & z\\ y & z & 1/3 \end{array} \right)            
$$
such that all of its principal minors are non-negative. Larger size matrices 
of this form appear in statistics as covariance matrices and also have significance for
the construction of graph cuts \cite{GoWi1995}. We note that this spectrahedron is invariant under
conjugation with $3\times 3$ permutation matrices.
\end{ex}
\begin{ex}
The Bloch ball in quantum information theory is the set of $2\times 2$ matrices
$$
  W = \frac{1}{2}\left( \begin{array}{cc} 1+z & x+iy\\ x-iy & 1-z  \end{array} \right)  
$$
with real parameters $x, y$, and $z$ satisfying $x^2+y^2+z^2 \le 1$. It is precisely the spectraplex in $\SYM_2(\mathbb{C})$
and models quantum states of a qubit. This set of states is invariant under conjugation
with $2\times 2$ unitaries. More generally, the spectraplex in $\SYM_d{(\mathbb{C})}$ as defined above is the convex set of quantum states
on the Hilbert space $\mathbb{C}^d$.
\end{ex}

We remark that after choosing a basis $\{A_j\}_{j=1}^D$ for the corresponding subspace of dimension $D$ obtained from all differences of pairs of vectors in $\mathcal S$,
a spectrahedron is necessarily of the form 
$$
  {\mathcal S} = \Bigl\{A(x) = A_0+\sum_{j=1}^D x_j A_j, x \in \mathbb{R}^D, A(x) \succeq 0  \Bigr\} \, .
$$
If $\mathcal S$ is monic, we can choose $A_0= \lambda I$, $\lambda> 0$, and $\TR(A_j)=0$ for $j \in \{1, 2, \dots, n\}$.

%
%

Spectrahedra are the feasibility sets of semidefinite programs \cite{GaMa2012}. Here, we wish to study the problem of
packings in spectrahedra.

\begin{defn}\label{def:specarr} A subset of $n$ elements in a spectrahedron $\mathcal S \subset \SYM_d(\mathbb{F})$ is called an {\it $(n,d)$-spectrahedron arrangement}.
When the spectrahedron is a subset of the spectraplex, and $n$ elements are chosen in such a way that the set of states $\mathcal W = \{W_j\}_{j=1}^n$
maximizes $\min_{j,l \le n, j \neq l} d_c(W_j ,W_l)$
among all such arrangements of purity $\tr{W_j^2} \le \gamma$ in $\mathcal S$, then we say $\mathcal W$ is an {\it optimal spectrahedron arrangement} of purity $\gamma$.
\end{defn}

%
%
%

With the help of the Euclidean metric induced by the Hilbert-Schmidt norm, a sphere packing bound given by 
Rankin~\cite{Ran55} 
can 
be reformulated in terms of inner products for elements in a spectrahedron, generalizing an approach in~\cite{GrassPack}. 
Rankin shows that $n$ points $\{v_j\}_{j=1}^n$ on the unit sphere in $\mathbb{R}^D$
provide an optimal packing if the maximal inner product saturates the bound
$\max_{j \ne l} \langle v_j, v_l\rangle \ge -1/(n-1)$, which requires $n \le D+1$.
This occurs precisely when the vectors form an equatorial simplex, meaning
the inner products are all equal to $-1/(n-1)$ and the vectors sum to zero. For $n>D+1$,
the lower bound improves to  $\max_{j \ne l} \langle v_j, v_l\rangle\ge 0$.

\begin{thm}\label{thm:rankin}
If $\{W_j\}_{j=1}^n$ are elements of a spectrahedron of dimension $D$ in $\SYM_d(\mathbb{F})$ with trace $\TR(W_j) = 1$ and each $W_j$ has purity $\TR(W_j^2) = \gamma$,
then 
\begin{equation}\label{eqn:Rankin1}
    \max\limits_{j,l \le n, j \neq l} \TR( W_j  W_l )  \geq \frac{n-\gamma d}{(n-1)d}
\end{equation}
and if equality is achieved then $n \le D+1$. In this case, the inner product $\TR( W_j  W_l )$
equals the lower bound for each $j \ne l$ and $\sum_{j=1}^n W_j = (n/d) I$.
If  $n >  D+1$, then
\begin{equation}\label{eqn:Rankin2}
\max\limits_{j,l \le n, j \neq l} \TR(W_j W_l) \geq \frac{1}{d}
\end{equation}
and if equality is achieved then $n \le 2 D $. 
\end{thm}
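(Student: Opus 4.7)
The plan is to reduce both inequalities to Rankin's classical sphere-packing bounds by exploiting the affine structure of the $W_j$. Let $A\subset\SYM_d(\BB{F})$ be the affine hull of $\{W_j\}_{j=1}^n$. Since $\mathrm{tr}(W_j)=1$, we have $A\subseteq H:=\{X\in\SYM_d(\BB{F}):\mathrm{tr}(X)=1\}$. Let $C$ be the Hilbert-Schmidt orthogonal projection of $0$ onto $A$, and set $L=A-C$, a linear subspace of dimension at most $D$. Then $C\perp L$, so by Pythagoras $\|W_j-C\|^2=\gamma-\|C\|^2=:r^2$, and the points $W_j-C$ all lie on the sphere of radius $r$ in $L$. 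The matrix $I/d$ is the unique point of $H$ closest to $0$, with $\|I/d\|^2=1/d$, so $\|C\|^2\ge 1/d$ with equality if and only if $I/d\in A$. Expanding $W_j=C+(W_j-C)$ and using $C\perp(W_l-C)$ gives
\[
\mathrm{tr}(W_jW_l)=\|C\|^2+\mathrm{tr}((W_j-C)(W_l-C)).
\]

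The first bound then follows from expanding $0\le\|\sum_{j=1}^n(W_j-C)\|^2$ in $L$: one obtains the Rankin simplex inequality $\max_{j\ne l}\mathrm{tr}((W_j-C)(W_l-C))\ge -r^2/(n-1)$, whence
\[
\max_{j\ne l}\mathrm{tr}(W_jW_l)\ge\|C\|^2-\frac{r^2}{n-1}=\frac{n\|C\|^2-\gamma}{n-1}\ge\frac{n/d-\gamma}{n-1}=\frac{n-\gamma d}{(n-1)d}.
\]
Equality throughout forces $\|C\|^2=1/d$ (so $C=I/d\in A$) and saturates Rankin's simplex inequality. The latter in turn forces $\sum_{j=1}^n(W_j-C)=0$, equivalently $\sum_j W_j=(n/d)I$, a common value of $\mathrm{tr}(W_jW_l)$ for all $j\ne l$, and the simplex configuration in $L$, which requires $n\le\dim L+1\le D+1$.

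For the second bound, take $n>D+1$. Rankin's classical theorem further asserts that $n$ unit vectors in $\BB{R}^D$ cannot have strictly negative pairwise inner products once $n>D+1$, and can have pairwise non-positive inner products only when $n\le 2D$ (with the cross polytope realizing equality). Applied to the unit vectors $(W_j-C)/r\in L\cong\BB{R}^D$, this yields $\max_{j\ne l}\mathrm{tr}((W_j-C)(W_l-C))\ge 0$, hence $\max_{j\ne l}\mathrm{tr}(W_jW_l)\ge\|C\|^2\ge 1/d$. Equality forces $\|C\|^2=1/d$, so $C=I/d$, and saturates the cross-polytope bound in $L$, which requires $n\le 2D$.

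The chief subtlety is that the affine hull of the $W_j$ need not contain the maximally mixed state $I/d$, so the Euclidean Rankin bounds must be applied relative to the projection point $C$ rather than $I/d$ directly; the uniform estimate $\|C\|^2\ge 1/d$, coming from the fact that $A$ lies in the trace-one hyperplane, is precisely what converts the Euclidean estimates for the $W_j-C$ into the stated inequalities for $\mathrm{tr}(W_jW_l)$.
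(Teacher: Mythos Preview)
Your proof is correct and follows the same core idea as the paper's---center the $W_j$ and invoke Rankin's classical sphere bounds---but with a genuine refinement. The paper simply sets $Q_j = W_j - I/d$, notes $\|Q_j\|^2 = \gamma - 1/d$, and applies Rankin in the linear span of the $Q_j$. Your choice to center instead at $C$, the orthogonal projection of $0$ onto the affine hull $A$ of the $W_j$, is more careful: it guarantees that the centered vectors $W_j - C$ lie in a \emph{linear} subspace $L$ of dimension at most $D$, so the dimension hypotheses in Rankin's theorem (needed for the constraint $n\le D+1$ in the simplex case and for the orthoplex bound when $n>D+1$) match the statement of the theorem directly. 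The paper's version tacitly assumes that the span of the $Q_j$ has dimension at most $D$, which is automatic once equality forces $I/d\in A$, but is not obvious a priori when $I/d\notin A$; your inequality $\|C\|^2\ge 1/d$ is precisely what converts the Rankin estimates in $L$ into the stated bounds and closes this gap. The final paragraph of your write-up identifies exactly this subtlety, and it is well taken.
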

\begin{proof}
The proof is obtained from Rankin's bound by mapping $\{W_j\}_{j=1}^n$ to trace-zero matrices $\{Q_j\}_{j=1}^n$,
$Q_j = W_j - \frac{1}{d}I$. By orthogonality, the squared Hilbert-Schmidt norm of each $Q_j$ is $\TR(Q_j^2)
= \gamma - \frac{1}{d}$.
Normalizing each $Q_j$ and then applying Rankin's  bounds in the Euclidean space spanned by these zero-trace matrices
gives the claimed inequalities and characterization of cases of equality. In particular, the zero-summing property
of optimal zero-trace matrices implies that $\sum_j W_j = (n/d) I$, so if the first inequality is saturated, then the spectrahedron is monic.
\end{proof}
We call \eqref{eqn:Rankin1} the \emph{first Rankin  bound} and \eqref{eqn:Rankin2} the \emph{second Rankin  bound}  for spectrahedra.

We can convert between inner products and distances in Hilbert-Schmidt norm to derive a packing bound.
In that case, we can allow the purity of the states to be bounded above instead of fixing it.

\begin{cor}
If $\{W_j\}_{j=1}^n$ is a set of states contained in a  spectrahedron of dimension $D$  in $\Sym_d(\mathbb{F})$ and
the purity of each state is bounded by $ \TR{W_j^2} \le \gamma$,
then $\min_{j \ne l} \TR{(W_j - W_l)^2 } \le  \frac{(2\gamma d-2)n}{(n-1)d} $. If equality is achieved, then $n \le D+1$,
the distance is a constant between any pair, and the states are coherent. If $n>D+1$, then
$
  \min_{j \ne l} \TR{(W_j - W_l)^2 } \le 2 \gamma - \frac{2}{d} \, .
$
\end{cor}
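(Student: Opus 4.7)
The plan is to reduce to trace-zero matrices and combine a double-counting identity with Theorem \ref{thm:rankin} and a classical Rankin-type linear independence argument. First I would set $Q_j = W_j - \frac{1}{d} I$, so that $\TR{(Q_j)} = 0$, $\TR{(Q_j^2)} = \TR{(W_j^2)} - \frac{1}{d} \le \gamma - \frac{1}{d}$, and $\TR{((W_j - W_l)^2)} = \TR{((Q_j - Q_l)^2)}$. These $Q_j$ all lie in the $D$-dimensional affine subspace $\mathcal{S} - \frac{1}{d} I$ of the trace-zero hyperplane in $\SYM_d(\mathbb{F})$.

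For the first bound, the identity
\begin{equation}
\sum_{j \ne l} \TR{((Q_j - Q_l)^2)} = 2 n \sum_j \TR{(Q_j^2)} - 2\, \TR{\left(\sum_j Q_j\right)^2},
\end{equation}
together with $\TR{(Q_j^2)} \le \gamma - 1/d$ and non-negativity of the second term on the right, bounds the left-hand side by $2 n^2 (\gamma - 1/d)$. Dividing by the $n(n-1)$ ordered pairs and using $\min \le$ average produces the first claimed inequality. Equality throughout the chain forces (i) $\TR{(Q_j^2)} = \gamma - 1/d$ for every $j$, so each $W_j$ has purity exactly $\gamma$; (ii) $\sum_j Q_j = 0$, so $\sum_j W_j = (n/d) I$, i.e., the states are coherent; and (iii) all pairwise squared distances coincide. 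Having reduced to the setting where all purities equal $\gamma$, I would then invoke the equality statement of the first Rankin bound in Theorem \ref{thm:rankin} to conclude $n \le D+1$.

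For the second bound, with $n > D + 1$, the aim is to exhibit a pair satisfying $\TR{(Q_j Q_l)} \ge 0$, equivalently $\TR{(W_j W_l)} \ge 1/d$; the stated bound then follows at once from $\TR{((W_j - W_l)^2)} = \TR{(W_j^2)} + \TR{(W_l^2)} - 2 \TR{(W_j W_l)} \le 2\gamma - 2/d$. Supposing for contradiction that every $\TR{(Q_j Q_l)} < 0$, I would pick any reference point $p \in \mathcal{S} - \frac{1}{d} I$, let $p^\perp$ denote its component orthogonal to the $D$-dimensional tangent linear subspace $V_0$ of this affine subspace, and write each $Q_j = p^\perp + v_j$ with $v_j \in V_0$. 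Orthogonality then yields $\TR{(Q_j Q_l)} = \TR{((p^\perp)^2)} + \TR{(v_j v_l)}$, forcing $\TR{(v_j v_l)} < -\TR{((p^\perp)^2)} \le 0$ for all $j \ne l$, and the classical fact that any $n$ vectors in a $D$-dimensional real inner-product space with pairwise strictly negative inner products satisfy $n \le D + 1$ contradicts $n > D + 1$.

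The subtlety worth flagging is this last step: because we only assume $\TR{(W_j^2)} \le \gamma$ rather than equality, the $Q_j$ need not share a common Hilbert-Schmidt norm, so one cannot directly normalize them and invoke Theorem \ref{thm:rankin} as in its own proof. Working with the un-normalized $Q_j$ on an affine (rather than linear) subspace is what makes the short decomposition via $p^\perp$ and $V_0$ needed.
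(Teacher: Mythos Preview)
Your argument is correct and, in spirit, parallel to the paper's: both pass to the trace-zero matrices $Q_j = W_j - \tfrac{1}{d}I$ and invoke Rankin-type bounds. For the first inequality, the paper simply cites Rankin's bound ``as in the proof of Theorem~\ref{thm:rankin},'' whereas you write out the averaging identity explicitly; these are the same argument, and your equality analysis matches the paper's characterization.

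Where your treatment adds something is in the second bound. The paper's proof just asserts that some $\TR(Q_j Q_l)\ge 0$ when $n>D+1$, appealing to Rankin's second bound. But as you observe, since purity is only bounded above the $Q_j$ need not share a common norm, and since the spectrahedron need not contain $\tfrac{1}{d}I$, the translated affine space $\mathcal{S}-\tfrac{1}{d}I$ need not pass through the origin, so the linear span of the $Q_j$ may have dimension $D+1$ rather than $D$. Your decomposition $Q_j = p^\perp + v_j$ with $v_j$ in the $D$-dimensional tangent space $V_0$, together with the classical fact that at most $D+1$ vectors in a $D$-dimensional space can have pairwise strictly negative inner products, disposes of both subtleties cleanly. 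This is a genuine refinement over the paper's one-line appeal to Rankin: in the monic case (where $p^\perp=0$) the two arguments coincide, but in general yours closes a small gap that the paper leaves implicit.
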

\begin{proof}
This consequence is verified using the identity $\tr[(W_j - W_l)^2] = \tr[(Q_j - Q_l)^2]$
and then by applying the bounds as in the proof of Theorem~\ref{thm:rankin}, $\tr[ Q_j Q_l ] \ge -(\gamma - \frac 1 d)/(n-1)$ when $n \le D+1$ or
$ \tr[ Q_j Q_l ] \ge 0$ when $ n >D+1$.
\end{proof}

We note that as the purity limit decreases, so does the first upper bound.  This can be expected from the fact that
the Hilbert-Schmidt ball available for the packing shrinks as $\gamma$ decreases.
In the constructions of optimal packings described below, we specialize to the choice $\gamma = 1/k$, with $k \in \mathbb N$. 
The optimal states emerge
as scaled rank-$k$ orthogonal projections. This is the minimal rank that can be achieved for such a purity limit. It can also be verified
that such scaled projections are extreme points in the set of purity-limited quantum states, because a non-trivial convex combination of projections has a higher rank than each of its summands unless their ranges coincide, according to Weyl's inequalities~\cite{HornJohnson}.  
The same result holds when we consider scaled projections among the classical states. More generally, we consider 
packing problems in spectrahedra
that are invariant under group operations. The quantum states are invariant under conjugating with unitaries, the classical states under
the action of the permutation group, and a third result is obtained from the action of the Heisenberg-Weyl group.

The cases of optimal packings we derive are then also cases of optimal subspace packings 
\bgb{within an appropriate set of subspaces.
Identifying classical states with the probability vectors formed by their diagonal entries, we can reformulate the packing problem
as distance maximization in a convex set whose dimension is $D=d-1$. The optimal purity-limited packings we construct are 
formed by probability vectors that are constant on a support set of fixed size~$k$. 
When the support of each probability vector is identified with the corresponding coordinate subspace,
then this yields an optimal packing among $k$-dimensional coordinate subspaces.
In the case of purity-limited quantum states, the optimizers saturate
the bounds appearing already in the work by Conway, Hardin and Sloane~\cite{GrassPack}. 
The dimension of the convex set in which the states reside is then maximal, $D=\DSYM-1$.}

If equality holds in either of the bounds in Theorem~\ref{thm:rankin}, then an optimal packing is achieved. If $n \le D+1$, the operators $\{W_j - \frac{1}{d} I\}_{j=1}^n$ obtained from the optimal set
$\{W_j\}_{j=1}^n$ form a simplex in a subspace of the zero-trace Hermitians. 
Since the Hilbert-Schmidt inner product between any two such states is then a constant,
 they are called equiangular.

\begin{defn}A subset of $n$ elements in the spectraplex of trace-normalized positive semidefinite matrices in $\SYM_d(\mathbb{F})$ 
with purity bound $\gamma$ is 
called an $(n,d,\gamma)$-spectraplex arrangement.
Such an arrangement $\mathcal W$ is called {\it equiangular} or {\it equichordal} if 
each element $W$ has purity $\TR(W^2) = \gamma$ and
there exists $c \ge 0$ such that 
for each  $W, W' \in \mathcal W$ with $W \ne W'$, $\TR(W W') = c$. 
\end{defn}


In the special case of purity $\gamma=1$,
equality in the first lower bound in Theorem~\ref{thm:rankin} is equivalent to the existence of an equiangular tight frame.
There are many examples of Rankin-bound achieving spectraplex-arrangements
given by rank-one projections. In this case, unit vectors are often chosen as representatives of the projections.

\begin{defn}\label{def:ETF}
Let $\mathbb{F} = \mathbb{R}$ or $\mathbb C$. A {\it frame} for $\mathbb{F}^d$ is a spanning sequence $\Phi = \{\varphi_j\}_{j=1}^n$. If the frame vectors have a constant norm $\|\varphi_j\|=\nu$ for each $j \in \{1, 2, \dots, n\}$, if the orthogonal projections $\{(1/\nu^2)\varphi_j \otimes \varphi_j^*\}_{j=1}^n$ form an equiangular spectraplex arrangement, and if $\sum_{j=1}^n \varphi_j \otimes \varphi_j^*$ is a constant multiple of the identity, then $\Phi$ is called and {\it equiangular tight frame},
often abbreviated as {\it ETF}.
\end{defn}

We will abuse notation slightly by also allowing $\Phi$ to denote the matrix with $\{\varphi_j\}_{j=1}^n$ as the ordered set of column vectors.  Further, we note that an arrangement of rank-one projections saturates the first Rankin spectrahedron bound  \eqref{eqn:Rankin1} if and only if it is associated with an equiangular tight frame (see, for example~\cite{StH03}).
For sufficiently small $n$, it is known that if such frames  exist, then the rank-one orthogonal projections whose range is the span of each frame vector
provide such projections. These frames can only exist up to $n=\DSYM$ (private correspondence with Gerzon cited in \cite{LS1973}). In the complex case, the existence of such extremal cases is an open question posed by Zauner.

\begin{conj}[\cite{Zauner1999,Zauner2011}]\label{conj:Zau}
For each $d\in \mathbb N$, an equiangular tight frame
of $n=d^2$ vectors can be obtained from the orbit of a vector under the irreducible action of the Heisenberg-Weyl group
on $\mathbb{C}^d$.
\end{conj}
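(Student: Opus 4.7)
The plan is to reduce the conjecture to finding a single fiducial vector $\varphi \in \bC^d$ whose orbit under the projective action of the Heisenberg-Weyl group generates the desired ETF. Let $S$ and $M$ in $\bC^{d\times d}$ denote the cyclic shift and modulation operators, $Se_j = e_{j+1 \bmod d}$ and $Me_j = \omega^j e_j$ with $\omega = e^{2\pi i/d}$, and form the displacement operators $D_{p,q} = \tau^{pq} S^p M^q$ for $(p,q) \in \bZ_d \times \bZ_d$, where $\tau = -e^{i\pi/d}$. These satisfy $D_{p,q} D_{r,s} = \tau^{2(ps-qr)} D_{r,s} D_{p,q}$ and so define an irreducible projective representation of $\bZ_d \times \bZ_d$. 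Saturating the first Rankin bound of Theorem~\ref{thm:rankin} with $n=d^2$ and $\gamma=1$ then reduces to producing a unit vector $\varphi$ for which $|\langle D_{p,q}\varphi,\varphi\rangle|^2 = 1/(d+1)$ holds for every $(p,q)\neq(0,0)$.

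The next step exploits group covariance: the inner product $\langle D_{p,q}\varphi, D_{r,s}\varphi\rangle$ depends on $(p-r,q-s)$ only up to a unimodular phase, so the $\binom{d^2}{2}$ equiangularity conditions collapse to a system of $d^2-1$ real algebraic equations in roughly $2d-2$ real unknowns once the norm of $\varphi$ and its global phase are quotiented out. A natural further ansatz is to require $\varphi$ to be an eigenvector of the \emph{Zauner unitary}, the distinguished order-three element in the normalizer of the Heisenberg-Weyl group inside $\mathrm{U}(d)$; this cuts the search down to an eigenspace of dimension approximately $d/3$ and is the setting in which numerical fiducials in hundreds of dimensions, and exact algebraic fiducials in many specific $d$, have been produced.

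The principal obstacle is that the statement is a genuine open problem, not a lemma. The reduced polynomial system is highly nonlinear and overdetermined in a delicate way, and the only fiducials known in closed form exhibit deep number-theoretic structure, with coordinates lying in abelian extensions of certain real quadratic fields attached to $d$. No argument is known that forces existence without exhibiting a solution, and no single construction is known to cover all $d$ uniformly. Accordingly, the natural role of the statement in a paper like the present one is to motivate and constrain the Heisenberg-Weyl constructions developed in later sections; rather than attempt a proof, one cites the conjecture and, where needed, verifies it by hand in the small dimensions in which the Gabor-Steiner construction will be instantiated.
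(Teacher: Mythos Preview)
Your assessment is correct: the statement is labeled as a conjecture in the paper precisely because it is Zauner's open conjecture, and the paper does not attempt a proof. The paper's treatment matches your final paragraph exactly---immediately after stating the conjecture it notes that ``there is a growing body of evidence supporting Zauner's conjecture \dots\ but as of yet, it has not even been proven that there are infinitely many $d$ yielding an equiangular tight frame of $d^2$ vectors in $\bC^d$,'' and then uses the conjecture only to motivate the Heisenberg--Weyl (Gabor) constructions of Section~\ref{sec:gabor}.
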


There is a growing body of evidence supporting Zauner's conjecture (see, e.g., \cite{ScGr10,GrSc17}) but as of yet, it has not even been proven that there are infinitely many $d$ yielding an equiangular tight frame of $d^2$ vectors in $\bC^d$, regardless of group covariance.  For a general overview of the state of the problem, as well as the connections between such maximal equiangular tight frames and algebraic number theory, Lie and Jordan algebras, $2$-designs, finite groups, stochastic matrices, and quantum information, see \cite{FHS17,AFZ15} and the references therein.

Equality in the second bound in Theorem~\ref{thm:rankin} can only hold if $n\le 2 \DSYM-2$.
A maximal set $\{W_j\}_{j=1}^n$  of size $n=2\DSYM-2$ achieving the second lower bound
is related to a set of operators $\{W_j - \frac{1}{d} I\}_{j=1}^n$ that form an orthoplex in the space of zero-trace Hermitians.  This
explains why the \emph{second Rankin bound}  \eqref{eqn:Rankin2} is also called the orthoplex bound.

\begin{defn}
Let $\mathcal S$ be a spectrahedron of dimension $D$ in $M_d(\mathbb{F})$.
An arrangement $\{W_j\}_{j=1}^n$ of states in $\mathcal S$ is called {\it orthoplex-bound achieving} if 
$n>D+1$ and 
$$
   \max_{j,l \le n, j \ne l} \TR{(W_j W_l)} = \frac{1}{d} \, .
$$
\end{defn}

  There are also examples that saturate \eqref{eqn:Rankin2}, like
the projections associated with maximal sets of mutually unbiased bases.
\begin{defn} Let $\mathbb{F} = \mathbb{R}$ or $\mathbb C$.
A family of vectors $\Phi=\{\eta^{(j,i)}\}$ in $\mathbb{F}^d$ indexed by $i \in \BB K = \{1, 2, \dots d\}$ and 
$ j \in \BB J=\{1, 2, \dots r\} $
is said to form $r$ \textit{mutually unbiased bases} if 
for all $j,j' \in \BB J$ and $k,k' \in \BB K$
the magnitude of the inner product between $\eta\up{j,i}$ and $\eta\up{j',i'}$ is given by
$$
   |\langle \eta\up{j,i} , \eta\up{j',i'} \rangle | =  \delta_{i,i'} \delta_{j,j'} + \frac{1}{\sqrt d} (1-\delta_{j,j'}) \, ,                              
$$
where Kronecker's $\delta$ symbol is one when its indices are equal and zero otherwise.
If $\mathbb F = \mathbb R$ and $r=d/2+1$ or $\mathbb F = \mathbb C$ and $r=d+1$ then
we say that $\Phi$ is a {\it maximal set of mutually unbiased bases}.
\end{defn}



A maximal set of mutually unbiased bases saturates the orthoplex bound and contains 
$n=(d/2+1)d$ vectors if $\mathbb F = \mathbb R$ and $n=d(d+1)$ if $\mathbb{F}=\mathbb{C}$.
In both cases, $n>\DSYM$, so the corresponding (pure) states form an optimal packing
in the spectraplex of $\SYM_d({\mathbb F})$.

For the following, we introduce the notion of affine unbiased quantum state arrangements.
These arrangements generalize mutually unbiased bases to the purity-limited case.

\begin{defn}
An {\it affine unbiased quantum state arrangement} of purity $\gamma$ is a collection of states $\mathcal A = \{W_j\}_{j \in J}$ on $\mathbb{F}^d$ such that each
state $W_j$ satisfies  $\TR{W_j^2} = \gamma$ and
$\mathcal A$  can be partitioned into subsets $\{\mathcal{A}_j\}_{j=1}^\rho$ whose states are mutually orthogonal
and sum to a multiple of the identity, and there is $\mu>0$ such that if two states belong to different subsets, $W \in \mathcal{A}_j$, $W' \in \mathcal{A}_{j'}$, $j \ne j'$, then $\TR{W W' } = \mu$.
\end{defn}

Affine unbiased quantum state arrangements can be constructed with diagonal projection matrices
associated with affine block designs, as we see below.

\begin{lemma}
If each subset $\mathcal{A}_j$ in the partition of an affine unbiased quantum state arrangement $\mathcal A$ 
of purity $\gamma$ has size $\sigma$, then
$\gamma = \sigma/d$. Moreover, if the partition has $\rho$ subsets, then  $\mu = 1/d$.
\end{lemma}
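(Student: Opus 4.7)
The plan is to exploit the two defining structural properties of $\mathcal A$ (mutual orthogonality within each $\mathcal{A}_j$ and each subset summing to a multiple of the identity) by computing Hilbert--Schmidt inner products against the identity in two different ways. Everything reduces to trace identities.

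First, for each $j \in \{1, \dots, \rho\}$, the hypothesis gives $\sum_{W \in \mathcal{A}_j} W = c_j I$ for some $c_j \geq 0$. Taking the trace and using $\TR(W) = 1$ together with $\abs{\mathcal{A}_j} = \sigma$ pins down $c_j = \sigma/d$ (in particular, independent of $j$). Now fix some $W \in \mathcal{A}_j$ and compute
$$
\TR\!\Bigl(W \sum_{W' \in \mathcal{A}_j} W'\Bigr) = \TR\!\Bigl(W \cdot \tfrac{\sigma}{d} I\Bigr) = \tfrac{\sigma}{d}.
$$
On the other hand, by expanding the sum and using intra-subset orthogonality, the left-hand side equals $\TR(W^2) + 0 = \gamma$. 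This yields $\gamma = \sigma/d$.

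For the second claim, sum the subset identities over all $j$ to obtain $\sum_{W' \in \mathcal A} W' = (\rho \sigma/d)\, I$. Repeating the same two-way evaluation, for any $W \in \mathcal{A}_j$,
$$
\TR\!\Bigl(W \sum_{W' \in \mathcal{A}} W'\Bigr) = \tfrac{\rho\sigma}{d}.
$$
Expanding the sum, the $W' = W$ term contributes $\gamma = \sigma/d$, the remaining $\sigma - 1$ terms within $\mathcal{A}_j$ contribute $0$ by orthogonality, and each of the $(\rho-1)\sigma$ terms from the other subsets contributes $\mu$. Equating gives $\rho\sigma/d = \sigma/d + (\rho-1)\sigma\, \mu$, whence $\mu = 1/d$ (assuming $\rho \geq 2$, which is the only case where $\mu$ is defined).

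There is no real obstacle here; the only point that requires a small check is that $c_j$ comes out the same across subsets, which is forced by the trace normalization. The argument is essentially a ``double counting'' via the trace pairing with the resolution of identity provided by each $\mathcal{A}_j$.
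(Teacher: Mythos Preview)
Your proof is correct and follows essentially the same approach as the paper: determine the multiple of the identity by taking a trace, then compute Hilbert--Schmidt pairings against this resolution of the identity in two ways. The only cosmetic differences are that the paper computes $\TR\bigl[(\sum_{W\in\mathcal{A}_j} W)^2\bigr]$ rather than pairing with a single $W$ to obtain $\gamma$, and for $\mu$ it pairs $W$ against a single other subset $\mathcal{A}_{j'}$ (yielding $\mu\sigma = \sigma/d$ directly) rather than against all of $\mathcal{A}$; neither change affects the substance of the argument.
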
 
\begin{proof}
If the subset $\mathcal{A}_j$ contains $\sigma$ states summing to a multiple of the identity, then 
$$
   \sum_{W \in \mathcal{A}_j} W = \kappa I 
$$
with $\kappa=\sigma/d$, obtained from taking the trace on both sides. From the orthogonality
and the normalization we can then conclude 
$$
   \sum_{W \in \mathcal{A}_j} \TR{W^2} = \sigma \gamma = \kappa^2 \TR{I} = \frac{\sigma^2}{d} \, ,
$$ 
which implies $\gamma=\sigma/d=\kappa$.
Next, summing $\TR{ W W' } = \mu$ over all $W' \in \mathcal{A}_{j'}$ with $W \in \mathcal{A}_j$ fixed and $j \ne j'$, 
gives $\mu \sigma = \kappa = \sigma/d$.
\end{proof}

\begin{prop}
If an affine unbiased quantum state arrangement $\mathcal A = \{W_j\}_{j=1}^n$ contains $n$ states in $\mathbb{F}^d$
and is partitioned into sets of mutually orthogonal states of size $\sigma$
and $n > \DSYM$, then $\mathcal A$ forms an optimal $(n,d)$-spectraplex arrangement.
\end{prop}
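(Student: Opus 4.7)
The plan is to apply the second Rankin bound \eqref{eqn:Rankin2} of Theorem~\ref{thm:rankin} to the spectraplex of trace-normalized positive semidefinite matrices in $\SYM_d(\mathbb{F})$, which has affine dimension $D = \DSYM - 1$. Since $n > \DSYM = D+1$, this yields the lower bound $\max_{j \ne l} \TR(W_j W_l) \ge 1/d$ for every $(n,d)$-spectraplex arrangement of fixed purity, and, after conversion to distances as in the corollary to Theorem~\ref{thm:rankin}, the chordal-distance upper bound $\min_{j \ne l} d_c(W_j, W_l)^2 \le \gamma - 1/d$ for any $(n,d)$-spectraplex arrangement of purity at most $\gamma = \sigma/d$.

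The next step is to show that $\mathcal{A}$ realizes this bound with equality. By definition each $W_j$ has purity exactly $\gamma$. If $W, W'$ lie in the same block $\mathcal{A}_j$, then $\TR(W W') = 0$ by mutual orthogonality, so $d_c(W,W')^2 = \gamma$. If $W, W'$ lie in different blocks, then by the preceding lemma $\TR(W W') = \mu = 1/d$, so $d_c(W,W')^2 = \gamma - 1/d$. Provided at least two blocks are present in the partition, the minimum squared chordal distance in $\mathcal{A}$ is therefore exactly $\gamma - 1/d$, matching the Rankin upper bound.

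The one subtlety is to rule out the degenerate case $\rho = 1$. The states of a single block $\mathcal{A}_j$ are mutually orthogonal, trace-one positive semidefinite matrices summing to $(\sigma/d) I$, so their ranges are pairwise orthogonal subspaces of $\mathbb{F}^d$, forcing $\sigma \le d$. Combined with $d \le \DSYM$, a one-block partition would give $n = \sigma \le \DSYM$, contradicting the hypothesis $n > \DSYM$. Hence $\rho \ge 2$, equality is attained, and $\mathcal{A}$ is an optimal $(n,d)$-spectraplex arrangement. The main (minor) obstacle is precisely this dimensional bookkeeping ruling out $\rho = 1$; once it is in hand, the conclusion is a one-line application of the preceding lemma together with Theorem~\ref{thm:rankin}.
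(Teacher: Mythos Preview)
Your proof is correct and follows essentially the same route as the paper: compute the pairwise Hilbert--Schmidt inner products ($0$ within a block, $1/d$ across blocks by the preceding lemma) and observe that this saturates the second Rankin bound for the spectraplex, which applies since $n > \DSYM = D+1$. The paper's version is terser --- it does not bother to convert to chordal distances or to rule out the one-block case $\rho=1$ --- but your extra bookkeeping is harmless and arguably makes the argument cleaner.
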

\begin{proof}
If two states $W_j$ and $W_{j'}$ belong to the same subset $\mathcal{A}_j$ in the partition of $\mathcal A$ and $j \ne j'$,
then $\TR{W_j W_{j'}} = 0$. If they belong to different subsets, then $\TR{W_j W_{j'}}= 1/d$, so
$
  \max_{j \ne j'} \TR{ W_j W_{j'} } = 1/d \, .
$
The orthoplex bound is saturated
and by the condition on the size $n$, the states form an optimal packing in the spectraplex.
\end{proof}

Choosing a spectrahedron embedded in an affine subspace of dimension $D < \DSYM-1$ implies that the second Rankin spectrahedron bound
already applies for sets of smaller size $n$.
For example, when restricting to the spectrahedron of diagonal matrices in $\Sym_d(\mathbb{F})$, then $D=d-1$ and \eqref{eqn:Rankin2} is relevant for $d < n \le 2d-2$. The same conclusion holds for the set
of positive semidefinite, trace-normalized circulant matrices with $D=(d-1)/2$ if $d$ is odd and $\mathbb F = \mathbb R$, $D=d/2$ if $d$ is even and $\mathbb F = \mathbb R$, and $D=d-1$ if $\mathbb F = \mathbb C$.

\begin{cor}\label{cor:diagonalsimplex}
Let $\mathcal{D}$ be the convex set of all classical states, diagonal $d\times d$ matrices with non-negative entries and trace $1$. For states $\{W_j\}_{j=1}^n$ in $\mathcal D$ with purity $\TR(W_j^2) = \gamma$, we have
$ \max_{j \ne l} \TR W_j W_l \ge (n-\gamma d)/((n-1)d)$.  If $n>d$, then $\max_{j \ne l}\tr W_j W_l  \ge 1/d$.
\end{cor}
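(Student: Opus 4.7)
The plan is to recognize $\mathcal{D}$ as a spectrahedron in $\Sym_d(\mathbb{F})$ of an explicit dimension and then invoke Theorem~\ref{thm:rankin} directly with that dimension.

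First, I would verify that $\mathcal{D}$ is a spectrahedron in the sense of Definition~\ref{def:spec}. The real affine space of diagonal matrices with unit trace is parameterized by the $d$ diagonal entries subject to a single linear constraint $\sum_{i=1}^d (W)_{ii}=1$, so it has real dimension $d-1$. Intersecting this affine subspace with the positive semidefinite cone yields exactly $\mathcal{D}$, which is therefore a spectrahedron of dimension $D=d-1$. (Concretely one can take $A_0 = \frac{1}{d} I$ and basis $A_j = E_{j,j} - E_{j+1,j+1}$ for $j=1,\dots, d-1$.)

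Second, I would apply Theorem~\ref{thm:rankin} to the arrangement $\{W_j\}_{j=1}^n \subset \mathcal{D}$ with this value of $D$. Every $W_j$ has trace one and purity $\gamma$ by hypothesis, so the first Rankin bound \eqref{eqn:Rankin1} immediately gives
\[
   \max_{j \ne l} \TR(W_j W_l) \;\ge\; \frac{n-\gamma d}{(n-1)d}.
\]
The condition $n > d$ is exactly the condition $n > D+1$ in Theorem~\ref{thm:rankin}, which activates the second Rankin bound \eqref{eqn:Rankin2} and yields
\[
  \max_{j \ne l} \TR(W_j W_l) \;\ge\; \frac{1}{d}.
\]

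There is no real obstacle here: the whole content is the identification $D=d-1$ for the diagonal spectrahedron and the fact that no additional hypothesis on $\mathcal{D}$ beyond being a spectrahedron is needed for Theorem~\ref{thm:rankin}. So the corollary follows immediately upon specializing the general spectrahedron bound.
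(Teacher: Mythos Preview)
Your proposal is correct and matches the paper's approach exactly: the paper notes just before the corollary that restricting to diagonal matrices gives a spectrahedron of dimension $D=d-1$, and the corollary is then an immediate specialization of Theorem~\ref{thm:rankin}.
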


In the following, we will construct such optimal arrangements. The key idea to realize them is to first consider
unbiased arrangements of diagonal projections. These can be obtained with combinatorial design theory (see, e.g., \cite{Stinson2004,Handbook}).

\begin{defn} A {\it balanced incomplete block design} (BIBD) $\mathcal B$ is a collection of subsets of $\{1,2, \dots, v\}$, also called blocks, such that
each block $\beta \in \mathcal B$ has the same size $k$ and any pair of distinct elements in $\{1,2, \dots, v\}$ is contained in precisely $\lambda$ blocks.
We also call $\mathcal B$ a $(v,k,\lambda)$-BIBD. Such a collection of subsets $\mathcal B$ is called an {\it affine block design} if it can be partitioned into
{\it parallel classes},
subsets $\{\mathcal{B}_j\}_{j=1}^\rho$ such that the blocks in each $\mathcal{B}_j$ partition $\{1,2, \dots, v\}$, and there is $\mu \in \mathbb N$ such that 
if $\beta \in \mathcal{B}_j$ and $\beta' \in \mathcal{B}_l$ with $j \ne l$,
then $|\beta \cap \beta'| = \mu$.
If $\cB$ can be partitioned into two nonempty collections $\cB_1$ and $\cB_2$ such that $\cB_1$ is a $(v,k,\lambda_1)$-BIBD and $\cB_2$ is a $(v,k,\lambda_2)$-BIBD, then we call $\cB$ {\it decomposable}. If no blocks are repeated, then $\cB$ is {\it simple}.  Finally, if there exists an $m \in \bN$ such that a $(v,k,\lambda/m)$-BIBD exists, then a $(v,k,\lambda)$-BIBD is an {\it $m$-multiple} or {\it quasimultiple}.
%
\end{defn}

We summarize well-known relationships between the parameters of an affine block design.
To this end, we identify each block $\beta$ with the corresponding $v \times v$ diagonal projection matrix $D_\beta$, with $(D_\beta)_{j,j} = 1 $ if $j \in \beta$ and 
all other entries vanishing. In terms of these matrices, a $(v,k,\lambda)$-design is associated with a set of diagonal $v\times v$ projection matrices $\{D_\beta\}_{\beta \in \mathcal B}$
with $ \TR(D_\beta) = k$ and if $j \ne l$, then $\sum_{\beta \in \mathcal B} (D_\beta)_{j,j} (D_\beta)_{l,l} = \lambda $. 
In this matrix notation, the relationships between the parameters \cite{Stinson2004} are obtained in a convenient way.

\begin{prop}\label{prop:pcsize}
If a $(v,k,\lambda)$-BIBD $\mathcal B$ is an affine block design for which the blocks between different parallel classes
intersect in $\nu$ elements
and each parallel class contains $\sigma$ blocks, then $k = \nu \sigma$, $v = \nu \sigma^2$
and the number of parallel classes is $\rho = \lambda \frac{\sigma^2 \nu-1}{\sigma\nu-1}$.
\end{prop}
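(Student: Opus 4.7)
The plan is to translate the combinatorial data directly into the diagonal-matrix formalism that was set up just before the proposition, and extract each relation from a trace computation.

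First I would record two basic matrix identities. Since each parallel class $\mathcal{B}_j$ partitions $\{1,2,\dots,v\}$, the $\sigma$ diagonal projections in $\mathcal{B}_j$ are mutually orthogonal and sum to the identity, so $\sum_{\beta \in \mathcal{B}_j} D_\beta = I$. Taking the trace gives $v = \sigma k$. Next, fix a block $\beta \in \mathcal{B}_j$ and a different parallel class $\mathcal{B}_l$. On the one hand,
\[
\TR\Bigl(D_\beta \sum_{\beta' \in \mathcal{B}_l} D_{\beta'}\Bigr) = \TR(D_\beta \cdot I) = k,
\]
and on the other hand each cross term satisfies $\TR(D_\beta D_{\beta'}) = |\beta \cap \beta'| = \nu$, so the sum equals $\sigma\nu$. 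Equating, $k = \sigma\nu$, and substituting into $v = \sigma k$ yields $v = \nu \sigma^2$.

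For the count of parallel classes, the key observation is that every point $i \in \{1,\dots,v\}$ belongs to exactly one block in each parallel class (again because parallel classes partition the point set), so the replication number $r$ of the underlying BIBD equals $\rho$. In matrix form, $\sum_{\beta \in \mathcal{B}} D_\beta = \rho I$. Now I would invoke (or rederive via a one-line double count in the matrix formalism) the standard BIBD identity $r(k-1) = \lambda(v-1)$: fix $i$ and sum $(D_\beta)_{ii}(D_\beta)_{ll}$ over all $\beta \in \mathcal{B}$ and all $l \neq i$ — the column-sum contribution gives $r(k-1)$, while the pairwise-incidence count gives $\lambda(v-1)$. Substituting $k = \nu\sigma$ and $v = \nu\sigma^2$,
\[
\rho = \frac{\lambda(v-1)}{k-1} = \lambda\,\frac{\nu\sigma^2 - 1}{\nu\sigma - 1}.
\]

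There is no real obstacle here; the proposition is a bookkeeping exercise once the matrix dictionary is in place. The only mild care needed is to note that the second identity $k = \sigma\nu$ relies on having at least two parallel classes (so that a distinct $\mathcal{B}_l$ exists), which is implicit in the hypothesis that the intersection number $\nu$ is well-defined.
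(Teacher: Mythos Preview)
Your proof is correct and follows essentially the same approach as the paper: both use trace computations on the diagonal projection matrices $D_\beta$ to extract the three relations. The only cosmetic difference is that the paper routes the first two identities through the preceding lemma on affine unbiased quantum state arrangements (which encodes exactly the computations $\sigma k = v$ and $\sigma\nu = k$ you wrote out directly), whereas you unpack those trace calculations in place; the derivation of $\rho$ via $r(k-1)=\lambda(v-1)$ is identical in both.
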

\begin{proof}
If $\mathcal B$ is an affine design, then the trace-normalized diagonal $v \times v$ matrices $\{(1/k)D_\beta\}_{\beta \in \mathcal B}$ associated with the blocks form
an affine unbiased quantum state arrangement with purity $\gamma=1/k$. Thus, if $D_\beta$ and $D_{\beta'}$ belong to different parallel classes,
we get $\mu = \tr[D_\beta D_{\beta'}]/k^2=\nu/k^2 = 1/v$, $\nu = k^2/v$ and 
$\sigma=\gamma v$, hence
$v=k \sigma = \nu \sigma^2$
from the lemma.

If one element of $\{1, 2, \dots v\}$ is fixed, then each block containing this element allows to pair it with $k-1$ others from the same block.
Counting the number of blocks $r$ containing this fixed
element then gives $r(k-1) = \lambda(v-1)$, so
\begin{equation}
   \sum_{\beta \in \mathcal{B}} D_\beta = \frac{\lambda (v-1)}{k-1} I \, .
\end{equation}
Inserting the expressions for $v$ and $k$ then gives the claimed identity for $\rho$. 
\end{proof}

Using these relationships allows us to re-express the size of the intersection between two blocks and the parameter $\lambda$  in terms of $k$ and $v$.

\begin{cor}\label{cor:affint}
An affine $(v,k,\lambda)$-BIBD has a total of  $n = \frac{v(v-1)}{k(k-1)}\lambda $ blocks and any two blocks $\beta$ and $\beta'$
intersect in a set of size
$$
   | \beta \cap \beta'  | \in \{k, k^2/v, 0\} \, .
$$
\end{cor}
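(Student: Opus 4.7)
The plan is to derive both statements by combining standard BIBD double counting with the structural relations for parallel classes supplied by Proposition~\ref{prop:pcsize}. Neither step should require new machinery; the corollary is essentially a rearrangement of the identities already established.

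For the count of blocks $n$, I would use the textbook double counting on incidences of unordered pairs of points in blocks. Any block contains $\binom{k}{2}$ pairs, and any one of the $\binom{v}{2}$ pairs of distinct points lies in exactly $\lambda$ blocks by the BIBD axiom, so $n\binom{k}{2} = \lambda\binom{v}{2}$, which rearranges to $n = \lambda v(v-1)/(k(k-1))$. This step does not actually invoke the affine hypothesis. Alternatively, one can derive the same formula from Proposition~\ref{prop:pcsize} by noting $n = \rho\sigma$ and substituting $\sigma^2\nu = v$, $\sigma\nu = k$, and $\sigma = v/k$ into $\rho = \lambda(\sigma^2\nu-1)/(\sigma\nu-1)$.

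For the trichotomy of intersection sizes, I would argue by cases according to how $\beta$ and $\beta'$ sit relative to the parallel class partition. If $\beta = \beta'$, then trivially $|\beta \cap \beta'| = k$. If $\beta$ and $\beta'$ are distinct blocks of the same parallel class, the partition property forces $|\beta \cap \beta'| = 0$. If $\beta$ and $\beta'$ lie in different parallel classes, the affine property gives $|\beta \cap \beta'| = \nu$, and Proposition~\ref{prop:pcsize} together with $\sigma = v/k$ converts $\nu = k/\sigma$ into $\nu = k^2/v$. I do not foresee any real obstacle; the only care needed is to correctly extract $\nu = k^2/v$ from the pair of relations $k = \nu\sigma$ and $v = \nu\sigma^2$ so that the third value in the intersection triple matches the claimed form.
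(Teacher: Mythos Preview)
Your proposal is correct and matches the paper's approach: the paper presents this corollary without an explicit proof, simply noting that the relationships established in Proposition~\ref{prop:pcsize} allow one to re-express the intersection size and the block count in terms of $k$ and $v$. Your double-counting argument for $n$ and your case analysis for $|\beta\cap\beta'|$ (using $\nu = k/\sigma$ and $\sigma = v/k$ to get $\nu = k^2/v$) are exactly the elementary manipulations the paper has in mind.
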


Standard examples of such affine block designs are based on affine geometries.
We will notice that the number of blocks is not large enough to put the associated diagonal projection matrices
in the regime where the orthoplex bound is optimal.

\begin{defn}\label{defn:affgeo}
Consider the Galois field $\BB F = GF(q)$, with $q$ a prime power, and $m \ge 2$. The affine geometry $AG(m,q)$
is the set of affine subspaces of ${\BB F}^m$.
\end{defn}

\begin{ex}[(Stinson~\cite{Stinson2004}, Theorem 5.18; \cite{Handbook}, Proposition VII.2.44)] \label{ex:affgeo2}
Let $q$ be a prime power, $m\ge 2$ and $\BB{F}^m$ the vector space of dimension $m$
over the Galois field $\BB F = GF(q)$. Let $\mathcal A$ be the set of 
hyperplanes in $\BB{F}^m$, i.e., the
affine subspaces of dimension $m-1$, then each $\beta \in \mathcal A$ has size
$q^{m-1}$ and enumerating the $v=q^m$ elements in the vector space gives that $\mathcal A$ 
 is an affine  $(q^m,q^{m-1},\lambda)$-BIBD with 
$ \lambda = (q^{m-1}-1)/(q-1)$ containing $\rho = (q^m-1)/(q-1)$ parallel classes
of $\sigma = q$ blocks each.
\end{ex}

In this case, the number of blocks and the size of the intersection are given in terms of the prime power $q$.

\begin{cor}When $m\ge 2$ and $q$ is a prime power,
the affine  $(q^m,q^{m-1},\lambda)$-BIBD
consists of $n=q(q^m-1)/(q-1)$ blocks for which any pair $\beta$ and $\beta'$ satisfy
$$
 | \beta \cap \beta'  | \in \{q^{m-1}, q^{m-2}, 0\} .
$$
In particular, if $\beta \ne \beta'$, then $| \beta \cap \beta'  | \le q^{m-2}$.
\end{cor}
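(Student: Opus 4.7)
The plan is to obtain this corollary as an immediate specialization of Corollary~\ref{cor:affint} and Proposition~\ref{prop:pcsize} to the parameters listed in Example~\ref{ex:affgeo2}. Those general results already give, for any affine $(v,k,\lambda)$-BIBD, both the total block count $n = v(v-1)\lambda/(k(k-1))$ and the trichotomy $|\beta \cap \beta'| \in \{k, k^2/v, 0\}$, so no further combinatorial work is required; the task reduces to plugging in $v = q^m$, $k = q^{m-1}$, and $\lambda = (q^{m-1}-1)/(q-1)$.

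First, I would compute the number of blocks. Substituting the parameters into Corollary~\ref{cor:affint},
\begin{equation*}
   n = \frac{v(v-1)}{k(k-1)}\lambda = \frac{q^m(q^m-1)}{q^{m-1}(q^{m-1}-1)} \cdot \frac{q^{m-1}-1}{q-1} = \frac{q(q^m-1)}{q-1},
\end{equation*}
as claimed. (One may also cross-check this against Proposition~\ref{prop:pcsize}: with $\sigma = q$ blocks per parallel class and $\rho = (q^m-1)/(q-1)$ parallel classes, $n = \rho\sigma$ gives the same value.)

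Next, for the intersection sizes, Corollary~\ref{cor:affint} tells us that any two blocks $\beta,\beta'$ of an affine BIBD satisfy $|\beta \cap \beta'| \in \{k, k^2/v, 0\}$. Substituting,
\begin{equation*}
   \{k,\, k^2/v,\, 0\} = \{q^{m-1},\, q^{2(m-1)}/q^m,\, 0\} = \{q^{m-1},\, q^{m-2},\, 0\},
\end{equation*}
which is the desired three-value set. Finally, if $\beta \ne \beta'$, then since each block has exactly $q^{m-1}$ elements, the value $|\beta \cap \beta'| = q^{m-1}$ would force $\beta = \beta'$; hence the only possibilities left are $q^{m-2}$ and $0$, giving $|\beta \cap \beta'| \le q^{m-2}$.

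Since every step is a direct substitution into results already established earlier in the paper, there is essentially no obstacle; the only mild point of care is observing that $|\beta \cap \beta'| = k$ forces equality of the two blocks (because blocks have uniform size $k$), which is what reduces the three cases to two in the strict inequality.
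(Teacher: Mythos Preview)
Your proof is correct and is precisely the intended argument: the paper states this result as an unproven corollary immediately after Corollary~\ref{cor:affint} and Example~\ref{ex:affgeo2}, expecting exactly the direct substitution $v=q^m$, $k=q^{m-1}$, $\lambda=(q^{m-1}-1)/(q-1)$ that you carry out. Your observation that $|\beta\cap\beta'|=k$ forces $\beta=\beta'$ (since blocks have uniform size $k$) is the right way to extract the final inequality, and is sound here because the hyperplanes of $AG(m,q)$ are distinct as sets.
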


The trace-normalized diagonal matrices belonging to an affine block design satisfy the orthoplex bound.
For $q\ge 2$,  $\frac{q}{q-1} (q^m-1) > q^m$, so according to Corollary~\ref{cor:diagonalsimplex}, they form an optimal packing in the simplex
of classical states. 

\begin{cor}\label{cor:affinedesigninnerproduct}
When $m\ge 2$ and $q$ is a prime power, an affine  $(d,k,\lambda)$-BIBD $\mathcal A$ having parameters $d=q^m$ and $k=q^{m-1}$
provides us with $n=q(q^m-1)/(q-1)$ associated
classical states  $\{(1/k)D_\beta\}_{\beta \in \mathcal A}$ 
that are orthoplex-bound achieving in the simplex of non-negative, trace-normalized diagonal matrices.
\end{cor}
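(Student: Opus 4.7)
The plan is to substitute the intersection information from the preceding corollary (which characterizes $|\beta \cap \beta'|$ for the affine $(q^m,q^{m-1},\lambda)$-BIBD) directly into the setup of Corollary~\ref{cor:diagonalsimplex}, and then verify that the second Rankin lower bound is actually attained rather than merely an upper envelope. Each block $\beta \in \mathcal A$ yields a diagonal projection $D_\beta$ of rank $k = q^{m-1}$, so $W_\beta := (1/k) D_\beta$ is a trace-one, non-negative diagonal matrix, i.e.\ a classical state, with purity $\tr[W_\beta^2] = 1/k$. For distinct $\beta,\beta'$ one has
\[
   \tr[W_\beta W_{\beta'}] \;=\; \frac{|\beta \cap \beta'|}{k^{2}}.
\]

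By the preceding corollary, for $\beta \neq \beta'$ the intersection $|\beta \cap \beta'|$ lies in $\{q^{m-2},0\}$, so the above inner product lies in $\{q^{-m},0\}$ after using $k^2 = q^{2m-2}$ and $d = q^m$. This shows $\max_{\beta \neq \beta'}\tr[W_\beta W_{\beta'}] \leq 1/d$. To see the upper value is attained, I would appeal to Proposition~\ref{prop:pcsize}: blocks from different parallel classes of an affine design intersect in exactly $\nu = k^2/d = q^{m-2}$ points, and the number of parallel classes is $\rho = (q^m-1)/(q-1) \geq 2$ whenever $m \geq 2$ and $q \geq 2$, so at least one such pair $(\beta,\beta')$ exists. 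Consequently $\max_{\beta \neq \beta'}\tr[W_\beta W_{\beta'}] = 1/d$.

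Finally, the definition of orthoplex-bound achieving (applied to the simplex of classical states, which has $D = d-1$) requires $n > d$. A direct computation gives
\[
   n - d \;=\; \frac{q(q^{m}-1)}{q-1} - q^{m} \;=\; \frac{q^{m}-q}{q-1} \;>\; 0
\]
for $m \geq 2$ and $q \geq 2$. Combined with Corollary~\ref{cor:diagonalsimplex}, this places the family in the regime in which the orthoplex bound is the operative lower bound, and the previous paragraph shows it is saturated. The argument is essentially a direct translation of the intersection dichotomy for affine geometries into Hilbert--Schmidt inner products, so there is no genuine obstacle; the only point requiring care is confirming that equality (not merely an inequality) holds, which is handled by the parallel-class observation.
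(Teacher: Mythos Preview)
Your proof is correct and follows essentially the same approach as the paper, which simply observes that $\frac{q}{q-1}(q^m-1) > q^m$ to place the arrangement in the orthoplex regime of Corollary~\ref{cor:diagonalsimplex} and appeals to the intersection sizes from the preceding corollary. Your version is more explicit in verifying that the value $1/d$ is actually attained (via blocks from distinct parallel classes) rather than merely an upper bound, which is a detail the paper leaves implicit.
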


The choice of $\mathcal A$ that achieves the orthoplex bound yields a block design whose packing properties
can be reformulated in the terminology of binary codes. If each $\beta \in \mathcal A$ is associated with
a binary sequence whose support is $\beta$, then these sequences form a constant-weight code
of weight $k=q^{m-1}$ that maximizes the Hamming distance, see \cite[Inequality (14)]{Johnson72} or \cite[Table 8.4]{Levenshtein1992}.
This implication has been pointed out in an earlier work \cite{BodmannHaas18} for the special case when $q$ is a power of $4$ in the real case or a power of $2$ in the complex
case.


\section{Mutually unbiased bases and affine designs}\label{sec:MUB}
We now show how the orthoplex bound can be achieved by combining optimal line packings with 
affine block designs. The first construction builds on maximal sets of mutually unbiased bases. 

Zauner~\cite{Zauner1999,Zauner2011} describes a construction of mutually unbiased bases that is closest
to the structure of affine block designs. 

\begin{defn}\label{def:trace} Let $q= p^s$ with $p$ a prime, $s \in \mathbb N$. Consider $\BB{F} = GF(q)$ as a vector space over $\BB{Z}_p = GF(p)$, and let
for $a \in \BB F$, the trace be defined by
$$
  \TRQS{q/p}{a} = a + a^p + a^{p^2} + \cdots + a^{p^{s-1}} \, .
  $$
 Two bases $\{a_j\}_{j=1}^s$ and $ \{b_j\}_{j=1}^s$ are called dual if
 $$
   \TRQS{q/p}{a_j b_l} = \delta_{j, l} \, .
 $$
\end{defn} 

\begin{defn}
Let $m \geq 2$ be an integer and $\zeta_m \in \bC$ a primitive $m$th root of unity. We denote the $m \times m$ identity matrix by $I_m$. Furthermore, we define the \emph{(cyclic) translation} $T_m$,  \emph{modulation} $M_m$, and \emph{discrete Fourier transform (DFT)} $F_m$ operators as
\[
T_m = \left( \begin{array}{ccccc} 0 & 0  & \hdots & 0 & 1 \\ 1 & 0  & \hdots & 0& 0 \\ 0 & 1 & \hdots & 0& 0 \\  \vdots  & \vdots & \ddots & \vdots & \vdots \\ 0 & 0  & \hdots& 1 & 0 \end{array}\right), \quad M_m =\left( \begin{array}{cccc} 1 & 0 & \hdots & 0 \\ 0 & \zeta_m & \hdots & 0 \\ \vdots & \vdots & \ddots & \vdots \\ 0 & 0 & \hdots & \zeta_m^{m-1} \end{array}\right),  \quad \textrm{and} \quad F_m = \left( \zeta_m^{ij} \right)_{i,j=0}^{m-1}.
\]
Further, if $m=(m_0, m_1, \hdots, m_s)$ is a vector of integers $\geq 2$, the group of translations over $\bigoplus_{\ell=0}^s \bZ_{m_\ell}$ is
\[
\left\{T_m^{(k)} = T_{m_0,\hdots, m_s}^{(k_0,\hdots,k_s)}:=  \bigotimes_{\ell=0}^s T_{m_\ell}^{k_\ell}: k=(k_0, \hdots k_s) \in \bigoplus_{\ell=0}^s \bZ_{m_\ell}\right\},
\]
where $\otimes$ is the Kronecker product.  Similarly, the group of modulations is
\[
\left\{M_m^{(\kappa)}=M_{m_0,\hdots, m_s}^{(\kappa_0,\hdots,\kappa_s)}:= \bigotimes_{\ell=0}^s M_{m_\ell}^{\kappa_\ell}: \kappa=(\kappa_0, \hdots \kappa_s) \in \bigoplus_{\ell=0}^s \bZ_{m_\ell}\right\}.
\]
The Fourier transform is $F_m=F_{m_0,\hdots,m_s}=\otimes_{\ell=0}^s F_{m_\ell}$.  We further define the \emph{quantum displacement operators} as $D_m^{(k, \kappa)} = (-\tau_m)^{-k \cdot \kappa} M_m^{(\kappa)} T_m^{(k)}$, where 
$\tau_m=\prod_\ell \tau_{m_\ell}$ and
each $\tau_{m_\ell}$ is a primitive $2m_\ell$th root of unity such that $\tau_{m_\ell}^2 = \zeta_{m_\ell}$. 

\end{defn}

We will always order elements of $\oplus_{\ell = 0}^s \bZ_{m_\ell}$ lexicographically. We also denote the all-ones-vector by $\mathbbm{1}$, with the dimension being clear from context.

\begin{defn}
For two dual bases $\{a_l\}_{l=1}^s$ and $ \{b_l\}_{l=1}^s$ of $\BB F=GF(q)$ and $c,d \in \BB F$
with $c=\sum_{j=1}^s c_j a_j $ and $d=\sum_{j=1}^s d_j b_j$, we define the Weyl displacement operator
as $D_{m}^{(k,\kappa)}$ as above, with $m=(p, p, \dots, p)$, acting 
on the Hilbert space $({\BB C}^{p})^{\otimes s}$.
\end{defn}

\begin{ex}\label{ex:weyl}
Let $\{\mathcal V_j\}_{j=0}^q$ be the collection of one-dimensional subspaces of $\BB{F}^2$,
then it can be verified that for each $j$, the set $\{D_m^{(c,d)}: (c,d) \in \mathcal V_j\}$ is pairwise commuting
and contains the identity $D_m^{(0,0)}$. By simultaneous diagonalization, for each $j$ there are linear combinations
of these operators that give $q$ mutually orthogonal rank-one projection operators $\{P_l\up j\}_{l=0}^{q-1}$.
Using the orthogonality among the Weyl operators, $\TR{(D^{(c,d)}(D^{(c',d')})^*))}=q \delta_{c,c'} \delta_{d,d'}$,
if two rank-one orthogonal projections $P$ and $P'$  are associated to two different subspaces $\mathcal{V}_j$
and $\mathcal{V}_{j'}$  of $\BB{F}^2$, $j \ne j'$, then
$$
  \TR{(P - \frac 1 q I)(P'- \frac 1 q I)} = 0 \, ,
$$
and hence 
$$
  \TR{PP'} = \frac 1 q \, .
$$
This means that the $q+1$ one-dimensional subspaces of $\BB{F}^2$ provide
mutually unbiased bases containing a total of $q(q+1)$ vectors in $\BB{C}^q$.
\end{ex}

\begin{defn}
For the real case, define $X = \left(\begin{array}{cc} 0 & 1 \\ 1 & 0 \end{array}\right)$,  
$Z= \left(\begin{array}{cc} 1 & 0 \\ 0 & -1 \end{array}\right)$, 
$T\up 0 = I \otimes I$, $T\up 1=X \otimes I$, $T\up 2 = I \otimes X$, $T\up 3=X \otimes X$;
$M\up 0 = I \otimes I$,
$M\up 1 = Z\otimes I$, $M\up 2 = I \otimes Z$, and $M\up 3= Z \otimes Z$.
Let $q=4^s$ and $\BB{F}= GF(q)$.
As before, we define displacement operators $\widetilde{D}_m^{(c,d)}$ in terms of tensor products of $T\up j$ and $M\up j$, $j \in \{0, 1, 2, 3\}$. 
\end{defn}

\begin{ex} Let $q=2^{2s}$ and $\BB{F}= GF(q)$.
Calderbank et al.~\cite{CalderbankCameronKantorSeidel1997}
find $q/2+1$ one-dimensional subspaces $\{\mathcal V_j\}_{j=0}^{q/2}$ of $\BB{F}^2$ for which
 $\{\widetilde{D}_m^{(c,d)}: (c,d) \in \mathcal V_j\}$ consists of  pairwise commuting symmetric matrices.
 Again by simultaneous diagonalization and Hilbert-Schmidt orthogonality,
 each subspace $\mathcal{V}_j$ produces $q$ mutually orthogonal rank-one projection operators $\{P_l\up j\}_{l=0}^{q-1}$
 and if
 two rank-one orthogonal projections $P$ and $P'$  are associated to two different subspaces $\mathcal{V}_j$
and $\mathcal{V}_{j'}$  of $\BB{F}^2$, $j \ne j'$, then
$
  \TR{PP'} = \frac 1 q \, .
$
Consequently, we obtain mutually unbiased bases with a total of 
 $q(q/2+1)$ vectors in $\BB{R}^q$.
\end{ex}

Next, we group the one-dimensional subspaces obtained with the mutually unbiased bases 
with the help of affine block designs in order to produce optimal quantum state arrangements.

\begin{defn}
Given a subset of an orthonormal basis $\mathcal{B}=\{b_i\}_{i=1}^d$ indexed by $\beta \subset \{1, 2, \dots, d\}$, the orthogonal projection onto the span of $\{b_i\}_{i \in \beta}$ is  called {\em coordinate projection $P_\beta$.}
\end{defn}


\begin{prop}\label{prop_MUBs_give_MUFFs}
If $\CAL{B} = \{b_j\}_{j=1}^d$ and ${\CAL{B}'}=\{{b'}_j\}_{j=1}^{d}$ are a pair of mutually unbiased bases for $\BB{F}^d$ and $\beta, \beta' \subset \{1, 2, \dots, d\}$ with $k=|\beta|=|\beta'|$, then
$
\TR(P_{\beta} {P'_{\beta'}}) = \frac{k^2}{d},$
where
$P_{\beta}$ is the $\beta$-coordinate projection with respect to $\CAL{B}$ and  ${P'}_{\beta'}$ is $\beta'$-coordinate projection with respect to ${\CAL{B}'}.$  
\end{prop}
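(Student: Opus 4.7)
The plan is to expand the trace using the definition of the coordinate projections and then apply the mutual-unbiasedness condition directly. Writing $P_\beta = \sum_{i \in \beta} b_i b_i^*$ and $P'_{\beta'} = \sum_{j \in \beta'} b'_j (b'_j)^*$, linearity of the trace gives
\[
   \TR(P_\beta P'_{\beta'}) \;=\; \sum_{i \in \beta}\sum_{j \in \beta'} \TR\!\bigl( b_i b_i^* \, b'_j (b'_j)^* \bigr).
\]
The cyclicity of the trace reduces each summand to the rank-one form $\TR( b_i b_i^* b'_j (b'_j)^* ) = \langle b_i, b'_j\rangle \langle b'_j, b_i\rangle = |\langle b_i, b'_j\rangle|^2$, so the computation boils down to summing squared magnitudes of inner products between vectors drawn from the two bases.

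At this point I would invoke the mutual unbiasedness hypothesis. By the definition of MUBs recalled earlier in the paper, for two vectors belonging to distinct MUBs one has $|\langle b_i, b'_j\rangle| = 1/\sqrt{d}$, hence $|\langle b_i, b'_j\rangle|^2 = 1/d$ for every $i,j$ (no exceptional diagonal case appears since $\CAL{B}$ and $\CAL{B}'$ are different bases). Substituting this constant value into the double sum yields
\[
   \TR(P_\beta P'_{\beta'}) \;=\; \sum_{i \in \beta}\sum_{j \in \beta'} \frac{1}{d} \;=\; \frac{|\beta|\,|\beta'|}{d} \;=\; \frac{k^2}{d},
\]
which is exactly the claimed identity.

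There is essentially no obstacle here: the entire proof is a one-line unfolding of the coordinate projection in the orthonormal basis, combined with the defining constant inner-product magnitude of MUBs. The only thing one should take care of is the implicit assumption that $\CAL{B}$ and $\CAL{B}'$ are genuinely \emph{distinct} MUBs (so that the Kronecker term in the MUB definition vanishes); the statement as written presumes this, and no further subtlety arises.
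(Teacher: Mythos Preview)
Your proof is correct and follows essentially the same approach as the paper's: expand the coordinate projections as sums of rank-one projections, reduce each trace term to $|\langle b_i, b'_j\rangle|^2$, and apply the MUB condition to get the constant $1/d$ summed over $k^2$ terms. The paper's version is simply the one-line display of this computation, while you have spelled out the intermediate steps (cyclicity of trace, the vanishing of the diagonal Kronecker term) more explicitly.
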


\begin{proof}
We have
$$
\TR(P_{\beta} {P'}_{\beta'}) 
= 
\sum\limits_{j \in \beta, j' \in \beta'} \left|\left\langle b_j, {b'}_{j'} \right\rangle \right|^2= \frac{k^2}{d}
.
$$
\end{proof}

Combining the preceding proposition with Corollary~\ref{cor:affinedesigninnerproduct} yields the following bound
for the Hilbert-Schmidt inner products between coordinate projections.

\begin{cor} \label{cor:opb}
Let $m\ge 2$ and $q$ be a prime power, and $\mathcal A$ be  
an affine  $(d,k,\lambda)$-BIBD with $d=q^m$, $k = q^{m-1}$ and $\lambda = (q^{m-1}-1)/(q-1)$. Let $\{\mathcal{B}_{j}\}_{j=1}^r$ be a set of mutually unbiased bases
and for $\beta\in \mathcal A$, $P\up j_\beta$ the $\beta$-coordinate projection corresponding to basis $ \mathcal{B}_j$, then
for any pair $(j,\beta)\ne (j',\beta')$, we have
$$
    \TR(P\up j_\beta P\up{j'}_{\beta'} ) \le k^2/d \, .
$$
\end{cor}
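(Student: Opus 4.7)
The plan is to split into two cases according to whether the coordinate projections are taken with respect to the same basis or different bases in the mutually unbiased family.

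First I would handle the case $j = j'$. Since $(j,\beta) \ne (j',\beta')$, this forces $\beta \ne \beta'$. Both $P^{(j)}_\beta$ and $P^{(j)}_{\beta'}$ are coordinate projections with respect to the same orthonormal basis $\mathcal{B}_j$, so they are simultaneously diagonal in that basis and $\TR(P^{(j)}_\beta P^{(j)}_{\beta'}) = |\beta \cap \beta'|$. Now I would invoke Corollary~\ref{cor:affint}, which tells us that in an affine $(d,k,\lambda)$-BIBD any two distinct blocks satisfy $|\beta \cap \beta'| \in \{k^2/d, 0\}$ (the value $k$ can only occur when $\beta = \beta'$, which is excluded). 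Hence $\TR(P^{(j)}_\beta P^{(j)}_{\beta'}) \le k^2/d$.

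Next I would handle the case $j \ne j'$. Here $\mathcal{B}_j$ and $\mathcal{B}_{j'}$ are distinct bases in the mutually unbiased family, and $|\beta| = |\beta'| = k$ since both are blocks of the same BIBD. Proposition~\ref{prop_MUBs_give_MUFFs} applies verbatim and yields $\TR(P^{(j)}_\beta P^{(j')}_{\beta'}) = k^2/d$, which of course satisfies the claimed inequality with equality.

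Combining both cases gives the stated bound for every pair $(j,\beta) \ne (j',\beta')$. The argument is essentially bookkeeping: the BIBD controls intersections within a single basis, while the MUB property controls inner products across bases. I do not anticipate any real obstacle, since both ingredients have already been established in the preceding material; the only subtle point is to ensure that in the same-basis case one correctly excludes the diagonal $\beta = \beta'$ so that Corollary~\ref{cor:affint} delivers an intersection of size at most $k^2/d$ rather than $k$.
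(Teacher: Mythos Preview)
Your proof is correct and follows essentially the same approach as the paper, which simply notes that the corollary follows by combining Proposition~\ref{prop_MUBs_give_MUFFs} with Corollary~\ref{cor:affinedesigninnerproduct}. Your case split and the ingredients you invoke (the affine BIBD intersection sizes for $j=j'$ and the MUB inner-product identity for $j\ne j'$) are exactly what the paper has in mind; the only cosmetic difference is that you cite Corollary~\ref{cor:affint} directly for the intersection sizes rather than the repackaged statement in Corollary~\ref{cor:affinedesigninnerproduct}.
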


Counting the number of blocks shows that the orthoplex bound is saturated, so the projections give an optimal quantum state arrangement.
This holds in the real as in the complex case when the dimension is appropriately restricted.

\begin{thm}\label{thm:MUBpack}
Let $\BB{F} = \BB{R}$, $m\ge 2$ and $q$ be a power of $4$. 
Let $\mathcal A$ be  
an affine  $(d,k,\lambda)$-BIBD with $d=q^m$, $k = q^{m-1}$ and $\lambda = (q^{m-1}-1)/(q-1)$ and $\{\mathcal{B}_{j}\}_{j=1}^r$
be a set of $r=d/2+1$ mutually unbiased bases 
and for $\beta\in \mathcal A$, $P\up j_\beta$ the $\beta$-coordinate projection corresponding to basis $ \mathcal{B}_j$, then
$\mathcal W = \{(1/k)P_{\beta}\up j : j \in \{1, 2, \dots, d/2+1\}, \beta \in \mathcal A\}$ is an optimal arrangement
of $n=\frac{(d-1)d(d/2+1)}{d-k}$ quantum states with purity $\gamma=q^{1-m}$.
\end{thm}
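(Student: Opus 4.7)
The plan is to show that $\mathcal W$ saturates the second Rankin bound \eqref{eqn:Rankin2} in the real spectraplex of $\SYM_d(\mathbb{R})$ while simultaneously having $n$ strictly greater than $\DSYM = d(d+1)/2$, so that Theorem~\ref{thm:rankin} (via its corollary translating inner products to chordal distances) certifies $\mathcal W$ as an optimal purity-limited arrangement.

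First I would verify the numerical invariants. Each $P_\beta^{(j)}$ is a rank-$k$ orthogonal projection (with $k = q^{m-1}$), so $(1/k)P_\beta^{(j)}$ has unit trace and purity $\TR\bigl([(1/k)P_\beta^{(j)}]^2\bigr) = (1/k^2)\TR(P_\beta^{(j)}) = 1/k = q^{1-m} = \gamma$. Using $q - 1 = (d-k)/k$ and $\lambda = (k-1)/(q-1)$, Corollary~\ref{cor:affint} gives $|\mathcal A| = d(d-1)\lambda/(k(k-1)) = d(d-1)/(d-k)$ blocks; multiplying by the $r = d/2+1$ real mutually unbiased bases produced by Calderbank et al.\ yields exactly $n = (d-1)d(d/2+1)/(d-k)$ elements.

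Next I would bound the pairwise Hilbert-Schmidt inner products. Take two distinct elements $W = (1/k)P_\beta^{(j)}$ and $W' = (1/k)P_{\beta'}^{(j')}$ in $\mathcal W$. If $j = j'$, both projections are coordinate projections with respect to the same orthonormal basis, so $\TR(P_\beta^{(j)} P_{\beta'}^{(j)}) = |\beta \cap \beta'|$; since $\mathcal A$ is affine and $\beta \ne \beta'$, Corollary~\ref{cor:affint} forces $|\beta \cap \beta'| \in \{k^2/d, 0\}$. If $j \ne j'$, Proposition~\ref{prop_MUBs_give_MUFFs} yields $\TR(P_\beta^{(j)} P_{\beta'}^{(j')}) = k^2/d$ exactly. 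In both cases $\TR(W W') \le 1/d$, with equality attained on any pair drawn from distinct bases, so $\max_{W \ne W'} \TR(W W') = 1/d$.

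Finally I would invoke the size condition. A short algebraic manipulation gives $n = d(d-1)(d+2)/(2(d-k)) > d(d+1)/2 = \DSYM$, so the second Rankin bound \eqref{eqn:Rankin2} is the relevant lower bound on $\max_{W \ne W'} \TR(W W')$ within the real spectraplex (whose dimension is $D = \DSYM - 1$). Since $\mathcal W$ saturates this bound, it is an optimal arrangement of purity $\gamma$. The only conceptual subtlety, rather than a genuine obstacle, is to recognise that the affine design was engineered to match the uniform MUB overlap: the exceptional intersection size $k^2/d$ allowed by affine designs coincides exactly with the cross-basis trace $k^2/d$ of Proposition~\ref{prop_MUBs_give_MUFFs}, so no pair exceeds the orthoplex value $1/d$, and the MUB-by-block grouping expands the packing to the regime $n > \DSYM$ where this bound is tight.
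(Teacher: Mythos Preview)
Your argument is correct and follows essentially the same strategy as the paper's proof: bound the pairwise Hilbert-Schmidt inner products by $1/d$ (the paper packages this as Corollary~\ref{cor:opb}, whereas you explicitly separate the same-basis and different-basis cases via Corollary~\ref{cor:affint} and Proposition~\ref{prop_MUBs_give_MUFFs}), then verify $n > \DSYM$ so that the orthoplex bound certifies optimality. Your write-up is simply more detailed, in particular in verifying the purity and the block count, but there is no substantive difference in method.
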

\begin{proof}
If $\mathbb F =\mathbb R$, $d=q^m$ with $q=4^i$ and $m\ge 2$, and $r=d/2+1$,  then we have
$n$ subspaces whose associated projections observe the bound in Corollary~\ref{cor:opb}
with $n=r \rho\sigma = (q^m/2+1) \frac{q(q^m-1)}{q-1} = \frac{(d/2+1) d (d-1)}{d-k} )\ge (d/2+1)d>d(d+1)/2$,
so the resulting states are orthoplex-bound achieving.
\end{proof}

\begin{thm}
Let $\BB{F} = \BB{C}$, $m\ge 2$ and $q$ be a prime power. 
Let $\mathcal A$ be  
an affine  $(d,k,\lambda)$-BIBD with $d=q^m$, $k = q^{m-1}$ and $\lambda = (q^{m-1}-1)/(q-1)$ and $\{\mathcal{B}_{j}\}_{j=1}^r$
be a set of $r=d+1$ mutually unbiased bases 
and for $\beta\in \mathcal A$, $P\up j_\beta$ the $\beta$-coordinate projection corresponding to basis $ \mathcal{B}_j$, then
$\mathcal P = \{(1/k)P_{\beta}\up j : j \in \{1, 2, \dots, d+1\}, \beta \in \mathcal A\}$ is an optimal arrangement
of $n=\frac{(d-1)d(d+1)}{d-k}$ quantum states with purity $\gamma=q^{1-m}$.
\end{thm}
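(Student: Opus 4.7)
The plan is to mirror the proof of Theorem~\ref{thm:MUBpack} with the adjustments appropriate to $\mathbb{F}=\mathbb{C}$, where a maximal set of mutually unbiased bases has size $r=d+1$ (rather than $d/2+1$) and the relevant Hilbert space $\SYM_d(\mathbb{C})$ has dimension $\DSYM = d^2$. The proof will have three ingredients: a purity computation, a uniform upper bound on the pairwise Hilbert–Schmidt inner products, and a counting argument that places $n$ above the threshold where the second Rankin bound governs optimality.

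First I would observe that each $P_\beta^{(j)}$ is a rank-$k$ orthogonal projection, so $(1/k) P_\beta^{(j)}$ has unit trace and purity $\TR[((1/k)P_\beta^{(j)})^2] = 1/k = q^{1-m} = \gamma$, confirming the claimed purity. Next, for any two distinct pairs $(j,\beta) \ne (j',\beta')$, I would split into two cases. If $j = j'$, then $P_\beta^{(j)}$ and $P_{\beta'}^{(j)}$ are coordinate projections with respect to the same basis, so $\TR(P_\beta^{(j)} P_{\beta'}^{(j)}) = |\beta \cap \beta'|$, which by Corollary~\ref{cor:affint} is at most $k^2/d$ since $\beta \ne \beta'$. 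If $j \ne j'$, then Proposition~\ref{prop_MUBs_give_MUFFs} gives exactly $\TR(P_\beta^{(j)} P_{\beta'}^{(j')}) = k^2/d$. After dividing by $k^2$, both cases yield $\TR[(1/k)P_\beta^{(j)}\,(1/k)P_{\beta'}^{(j')}] \le 1/d$, so the arrangement attains the second Rankin bound in \eqref{eqn:Rankin2}.

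For the counting, the affine $(q^m,q^{m-1},\lambda)$-BIBD has $\rho\sigma = q(q^m-1)/(q-1) = (d-1)d/(d-k)$ blocks (using $d-k = q^{m-1}(q-1)$), and combining with the $r = d+1$ mutually unbiased bases produces
\[
n = (d+1)\cdot \frac{(d-1)d}{d-k} = \frac{(d-1)d(d+1)}{d-k}.
\]
To invoke the orthoplex regime, I must check $n > \DSYM = d^2$; equivalently $d(d^2-1) > d^2(d-k)$, i.e., $dk > 1$, which is trivially true. Hence $n$ exceeds $d^2$, and the states form an optimal $(n,d)$-spectraplex arrangement by the second Rankin bound in Theorem~\ref{thm:rankin}.

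There is no real obstacle here — every ingredient is already assembled in the preceding sections. The only care required is to keep the correct normalizations straight (the $1/k^2$ coming from $(1/k)P_\beta^{(j)}$) and to verify that the threshold $n>d^2$ for complex dimension is met, in contrast to the real threshold $n>d(d+1)/2$ used in Theorem~\ref{thm:MUBpack}.
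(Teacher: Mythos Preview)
Your proof is correct and follows essentially the same approach as the paper's own proof, which simply computes $n = r\rho\sigma = (q^m+1)\frac{q(q^m-1)}{q-1} = \frac{(d-1)d(d+1)}{d-k} \ge (d+1)d > d^2$ and invokes Corollary~\ref{cor:opb} for the inner-product bound. Your version is more explicit in verifying the purity and in separating the same-basis and different-basis cases, but the underlying argument is identical.
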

\begin{proof}
If $\mathbb F =\mathbb C$ and $r=d+1$, with $d=q^m$, $m\ge 2$, then we have
$n$ subspaces with
$n=r \rho\sigma = (q^m+1) \frac{q(q^m-1)}{q-1} = \frac{(d-1)d(d+1)}{d-k} \ge (d+1)d$,
so the resulting states are orthoplex-bound achieving.
\end{proof}

\section{Gabor-Steiner equiangular tight frames}\label{sec:gabor}

The preceding section combined optimal line packings, associated with maximal sets of mutually unbiased bases, 
with combinatorial designs in order
to construct quantum states that form optimal arrangements. Hereafter, we show that a similar type of result can be obtained starting
from equiangular line packings. The line packings used for this purpose are given by orbits under the action of the Heisenberg-Weyl group.
To prepare this result, we first fix some notation.

For a ring with unity $R$, $\operatorname{Mat}_n(R)$ denotes the $n \times n$ matrices with entries in $R$, $\SYM_n(R)$ the symmetric matrices with entries in $R$. 

\begin{defn}
Let $G=\bigoplus_{\ell=0}^s \bZ_{m_\ell}$ be an arbitrary finite abelian group. 
The \emph{(finite) Weyl-Heisenberg group over $\bigoplus_{\ell=0}^s \bZ_{m_\ell}$} is the semidirect product $G \rtimes (G \times G)$ equipped with the operation
\[
(\lambda, k, \kappa) \cdot (\tilde{\lambda}, \tilde{k}, \tilde{\kappa}) = (\lambda + \tilde{\lambda} + \tilde{k}\kappa - k \tilde{\kappa}, k+\tilde{k}, \kappa + \tilde{\kappa}),
\]
where $k \tilde{\kappa} \in G$ is the sum of the component-wise product of the elements in $G$. Set $\abs{m}=\prod_{\ell=0}^s m_\ell$, and for each $\ell \in \{ 0, \hdots, s\}$, fix a primitive $2m_\ell$th root of unity $\tau_{m_\ell}$ where $\zeta_{m_\ell} = \tau_{m_\ell}^2$ is the primitive $m_\ell$th root of unity used to define the modulation operator $M_{m_\ell}$. A standard unitary representation of the finite Weyl-Heisenberg group is
\[
(\lambda, k, \kappa) \mapsto \left(\prod_{\ell=0}^s (-\tau_{m_\ell})^{\lambda_\ell}\right) D_m^{(k,\kappa)} = \left(\prod_{\ell=0}^s (-\tau_{m_\ell})^{\lambda_\ell - k \kappa}\right)M_m^{(\kappa)}T_m^{(k)}  \in U(\abs{m})\, ,
\]
where $U(\abs{m})$ is the set of all $\abs{m}\times\abs{m}$ unitaries.

We define $\sigma: G \times G \rightarrow U(\abs{m})$ by $\sigma(k,\kappa)= M_{m}^{(\kappa)} T^{(k)}_{m}$.  If $\abs{m}$ is odd, we further define the mapping $\pi: G \times G \rightarrow U(\abs{m}(\abs{m}-1)/2)$ as
\[
\pi(k, \kappa) = I_{(\abs{m}-1)/2} \otimes \left( M_{m}^{(\kappa)}T^{(k)}_{m}\right).
\] 
\end{defn}
The mapping $\pi$ may be viewed as a lifting of $\sigma$. We note that $\pi$ is dependent upon the primitive $m_\ell$th roots of unity used to define the $M_m^{(\kappa)}$.  It is easy to see that $\sigma$ and $\pi$ are faithful projective representations of $G \times G$, that is, they are injective and the product of two image points is, up to constant, the image of the product. A frame formed as the orbit of a vector under $\sigma(G \times G)$ is called a \emph{(finite) Gabor frame}. (We note that \emph{Gabor-Weyl-Heisenberg frame} would be a more appropriate name, but for simplicity we will use the name which is standard in frame theory.) We may now rephrase Zauner's conjecture (Conjecture~\ref{conj:Zau}): For all $m \in \bN$ there exists a Gabor equiangular tight frame of $m^2$ vectors in $\bC^m$.

Such an equiangular tight frame is maximal with respect to the number of vectors by Theorem~\ref{thm:rankin} and is also called a \emph{symmetric informationally complete positive operator-valued measure} (\emph{SIC}).  Up to equivalence, each known SIC is a Gabor frame over $\bZ_m$ for some $m \in \bN$ or over $\bZ_2 \oplus \bZ_2 \oplus \bZ_2$ \cite{Hog98,FHS17}. It is known that if $p$ is a prime, then any SIC in $\bC^p$ generated by a (projective) unitary representation is equivalent to one formed as the orbit of the Weyl-Heisenberg group \cite{Zhu10}.  
In addition, SICs obtained from the orbit of a vector in $\mathbb{C}^{d}$ under the irreducible representation of the Heisenberg-Weyl group 
form a maximal orbit in the sense that the group action leads to an orbit of $d^2$ states, which is equal to the size of
the projective equivalence classes of the group representation, when identifying unitaries that differ by a unimodular factor.

The linear dependencies of vectors in equiangular tight frames are of particular interest in compressed sensing and other sparsity-based methods, since a linear dependence of $v$ vectors in an equiangular tight frame $\Phi$ corresponds to a non-zero vector lying in the kernel of $\Phi$ with at most $v$ non-zero entries.  Furthermore, detecting the non-trival linear dependencies of group-generated equiangular tight frames yields information about additional symmetries possessed by the equiangular tight frame.  The linear dependences of frame vectors are encoded in their associated matroid structure, and one may leverage results about matroids to (re)prove results in frame theory \cite{King15,CahMS}. There is also the surprising result that vectors that are optimally geometrically spread apart like those which compose an equiangular tight frame often have non-trivial linear dependences \cite{King15,FJKM17}. We have the following relationship between geometric and algebraic spread of vectors.

\begin{thm}[\cite{DE03,BDE09}]\label{thm:spark}
Let $\Phi = \{ \varphi_j\}_{j = 1}^n$ be a collection of vectors in $\bF^d$ which are not all pairwise orthogonal. Define the {\it spark} to be the size of the smallest linearly dependent subset of $\Phi$.  Then
\beq\label{eqn:spark}
\spark \Phi \geq 1 + 1/\max_{j,l\leq n, j \neq l} \absip{\varphi_j}{\varphi_l}.
\eeq
\end{thm}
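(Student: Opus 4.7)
The plan is to apply Gershgorin's circle theorem to the Gram matrix of a minimally linearly dependent subset, which is the standard mutual-coherence argument. I will proceed under the (implicit) assumption that the vectors are unit norm, as is usual in this context; otherwise one normalizes and the statement is rescaled accordingly. Note that the hypothesis that the vectors are not all pairwise orthogonal guarantees that $\mu := \max_{j,l \leq n, j \neq l} |\langle \varphi_j, \varphi_l \rangle| > 0$, so the right-hand side of \eqref{eqn:spark} is well defined.

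Let $s = \spark \Phi$ and, by definition of the spark, choose a linearly dependent set $\{\varphi_{j_1}, \varphi_{j_2}, \dots, \varphi_{j_s}\} \subset \Phi$ of minimal cardinality. Form the $s \times s$ Gram matrix $G$ with entries $G_{i,l} = \langle \varphi_{j_l}, \varphi_{j_i} \rangle$. Linear dependence of the chosen vectors means that $G$ is singular, so $0$ is an eigenvalue of $G$.

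Since the vectors are unit norm, the diagonal entries of $G$ are all equal to $1$, while the off-diagonal entries are bounded in modulus by $\mu$. Applying Gershgorin's circle theorem to the eigenvalue $0$, there exists an index $i \in \{1, 2, \dots, s\}$ such that
\[
1 = |0 - G_{i,i}| \leq \sum_{l \neq i} |G_{i,l}| \leq (s-1)\mu.
\]
Rearranging yields $s \geq 1 + 1/\mu$, which is \eqref{eqn:spark}.

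The only real subtlety is the normalization convention: if the $\varphi_j$ fail to be unit vectors, the same argument works after replacing each $\varphi_j$ by $\varphi_j/\|\varphi_j\|$, which rescales $\mu$; and the condition that the vectors are not all pairwise orthogonal is precisely what is needed to ensure a minimal linearly dependent subset of size at least two exists whose Gram matrix has a nontrivial off-diagonal, making the Gershgorin estimate nonvacuous.
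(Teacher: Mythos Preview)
Your argument is correct and is the standard mutual-coherence proof via Gershgorin's theorem, which is essentially the approach in the cited references \cite{DE03,BDE09}; the paper itself does not supply a proof of this statement but simply attributes it to those works. One small remark: your closing comment slightly overstates the role of the non-orthogonality hypothesis---it is needed only to guarantee $\mu>0$ so that the right-hand side is finite, not to ensure the Gershgorin row has a nontrivial off-diagonal (the inequality $1\le (s-1)\mu$ would otherwise yield a contradiction automatically).
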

Spark is known in matroid theory as \emph{girth}.  If $\spark (\Phi) = d+1$, then $\Phi$ has no non-trivial linear dependence relationships between the vectors and is called \emph{full spark}.  In matroid terminology, a full spark frame corresponds to the uniform matroid of the correct parameters. Calculating the spark of a matrix is in general NP Hard \cite{TiPf14}, although there are special cases which can be solved using exact (mixed-)integer programming models and linear programming heuristics \cite{Til18}.  In what follows, we will calculate the spark of an infinite class of frames and for an infinite subclass of these frames, determine all of the subsets of $\spark \Phi$ vectors which are linearly dependent. Most work about linear dependencies in Gabor frames focuses on full spark conditions in general Gabor frames (not necessarily SICs) \cite{LPW05,KPR08,Mal15,Mal17}.  However, \cite{ABDF17,DBBA2013} (inspired in part by \cite{Hugh07}) deal with finding special subsets of vectors in certain SICs which form equiangular tight frames for their span. In particular, during the talk \cite{Hugh07}, it is pointed out that when one chooses an appropriate $\psi \in \bC^3$, one gets a SIC with linear dependencies corresponding to the Hesse configuration, which is $AG(2,3)$ (see Example~\ref{ex:hesse}). We will see in Theorem~\ref{thm:AG} that for all odd primes $p$, there is a $\psi$ such that the orbit of $\psi$ under $\pi(\bZ_p \times \bZ_p)$ is an equiangular tight frame $\Phi$ for which the minimal linearly dependent sets (i.e., those of size $\spark(\Phi)$) correspond to $AG(2,p)$.

\begin{defn}[\cite{GKK01,ChWa16,FJKM17}] \label{defn:simpl}
Let $\Phi = \{ \varphi_j \}_{j \in J}$, $\Psi = \{ \psi_j\}_{j \in J}$ be two sequences of vectors. If there exists a unitary matrix $U$, a permutation matrix $P$, and a unimodular diagonal matrix $D$ such that $\Psi = U \Phi P D$, then we say $\Phi$ and $\Psi$ are {\it switching equivalent}.  Let $\beta \subseteq J$.  If $\Phi_\beta = \{ \varphi_i\}_{j \in \beta}$ is switching equivalent to an equiangular tight frame with span of dimension $\abs{\beta}-1$ which has only real, negative inner products, then we call $\Phi_{\beta}$ a {\it simplex}.  The {\it (simplex) binder} $\cB =\cB(\Phi)$ is the set of simplices of an equiangular tight frame, or equivalently, their corresponding index sets, namely
\[
\cB = \cB(\Phi) = \left\{ \beta : \textrm{$\Phi_\beta \subset \Phi$ is a simplex} \right\}.
\]
\end{defn}
When $\Phi$ and $\Psi$ are switching equivalent they represent -- up to permutation, change of basis, and choice of spanning vector in each line -- the same collection of lines.  We note that some authors do not allow permutations in their definition of switching equivalence.

\begin{ex}[\cite{Hugh07,DBBA2013}] \label{ex:hesse}
We consider the linear dependencies of the orbits of two different vectors, namely $\psi=(\begin{array}{ccc} 1& 0& 1\end{array})^\top$ and $\tilde{\psi} =(\begin{array}{ccc}1 &0& -1\end{array})^\top$ under $\pi(\bZ_3 \times \bZ_3) =\sigma(\bZ_3 \times \bZ_3)$.  Fix $\zeta$ to be a primitive $3$rd root of unity. We note that by Definition~\ref{def:ETF}
\begin{align*}
\Phi_1 &= \left(\begin{array}{ccc|ccc|ccc} \! M_3^0 T_3^0 \psi  \! &  \! M_3^1 T_3^0 \psi  \! & \!  M_3^2 T_3^0 \psi \!  &  \! M_3^0 T_3^1 \psi  \! &  \!  M_3^1 T_3^1 \psi  \! & \!   M_3^2 T_3^1 \psi \!  &  \!  M_3^0 T_3^2 \psi \!  &  \! M_3^1 T_3^2 \psi \!  & \!  M_3^2 T_3^2 \psi \! \end{array}\right)\\
&= \left( \begin{array}{ccc|ccc|ccc} 1 & 1 & 1 & 1 & 1 & 1 & 0 & 0 & 0\\ 0 & 0 & 0 & 1 & \zeta & \zeta^2 &  1 & \zeta & \zeta^2 \\  1 & \zeta^2 & \zeta & 0 & 0 & 0 &  1 & \zeta^2 & \zeta\end{array} \right) \enskip \textrm{and}\\
\Phi_2 &= \left(\begin{array}{ccc|ccc|ccc}  \! M_3^0 T_3^0 \tilde{\psi}  \! &  \! M_3^1 T_3^0 \tilde{\psi} \!  &  \! M_3^2 T_3^0\tilde{\psi}  \! & \!  M_3^0 T_3^1 \tilde{\psi} \! &  \!  M_3^1 T_3^1 \tilde{\psi} \!  &   \! M_3^2 T_3^1 \tilde{\psi} \!  &  \!  M_3^0 T_3^2\tilde{\psi} \!  &  \! M_3^1 T_3^2\tilde{\psi} \!  & \!  M_3^2 T_3^2\tilde{\psi} \! \end{array}\right)\\
&= \left( \begin{array}{ccc|ccc|ccc} 1 & 1 & 1 & -1 & -1 & -1 & 0 & 0 & 0\\ 0 & 0 & 0 & 1 & \zeta & \zeta^2 &  -1 & -\zeta & -\zeta^2 \\  -1 & -\zeta^2 & -\zeta & 0 & 0 & 0 &  1 & \zeta^2 & \zeta\end{array} \right) 
\end{align*}
are both equiangular tight frames since the length of each vector is $\sqrt{2}$ and for $\varphi, \tilde{\varphi} \in \Phi_i$ with $\varphi \neq \tilde{\varphi}$ and $i \in \{1,2\}$,
\[
\tr\left(\left( (1/(\sqrt{2})^2\varphi) \otimes \varphi^\ast\right)\left(  \tilde{\varphi}^\ast   \otimes (1/(\sqrt{2})^2\tilde{\varphi})\right)\right) = \frac{1}{4} \absip{\varphi}{\tilde{\varphi}}^2 = \frac{1}{4} =\frac{9-3}{(9-1)3}.
\]
Since both $\Phi_1$ and $\Phi_2$ are equiangular tight frames of $9$ vectors spanning a $3$ dimensional space, they are SICs.  By performing an exhaustive search \cite{FJKM17}, one can see that the incidence matrices of the binders of $\Phi_1$ and $\Phi_2$ are 
\[
\cB(\Phi_1)\! \sim \!\left( \begin{array}{ccccccccc}1 & 1 & 1 & 0 & 0 & 0 & 0 & 0 & 0  \\  0 & 0 & 0  &1 & 1 & 1 & 0 & 0 & 0  \\0 & 0 & 0  & 0 & 0 & 0 & 1 & 1 & 1 
\end{array} \right)  \, \textrm{and} \,\,\,
\cB(\Phi_2) \!\sim \!\left( \begin{array}{ccccccccc}1 & 1 & 1 & 0 & 0 & 0 & 0 & 0 & 0  \\  0 & 0 & 0  &1 & 1 & 1 & 0 & 0 & 0  \\0 & 0 & 0  & 0 & 0 & 0 & 1 & 1 & 1 \\\hline 1 & 0 & 0 &  1 & 0 & 0 & 1 & 0 & 0 \\ 0 &1 & 0 & 0 &  1 & 0 & 0 & 1 & 0  \\ 0 & 0 &1 & 0 & 0 &  1 & 0 & 0 & 1   \\ \hline
1 & 0 & 0 & 0 & 1 & 0 & 0 & 0 & 1\\ 0 & 1 & 0 & 0 & 0 & 1 &  1 & 0 & 0\\ 0 & 0 & 1 & 1 &  0 & 0 & 0 & 1  & 0\\ \hline
1 & 0 & 0 & 0 & 0 & 1 & 0 & 1 & 0\\ 0 & 1 & 0 & 1 & 0 & 0 & 0 & 0 & 1\\ 0 & 0 & 1 & 0 &1  & 0 & 1 & 0 & 0
\end{array} \right).
\]
Each vector in $\Phi_1$ or $\Phi_2$ is parameterized by a unique $(k,\kappa) \in \bZ_3 \times \bZ_3$ corresponding to $M_3^\kappa T_3^k$, which we may view as points in the finite field plane $(GF(3))^2$.  The embedded simplices in $\Phi_1$ correspond precisely to the three vertical affine lines $\{(k, \kappa):\kappa \in GF(3)\}$. All other subsets of $3$ vectors in $\Phi_1$ are linearly independent. There are $12$ simplices in $\Phi_2$. These correspond to the vertical affine lines in $(GF(3))^2$ plus the affine lines with slopes $\{0, 1, 2\}$.  Following Definition~\ref{defn:affgeo}, this is precisely $AG(2,3)$. Up to equivalence, all of the SICs in $\bC^3$ belong to a parameterized family \cite{DBBA2013}.  With the exception of a finite set of points in the parameterization which have the same linear dependencies as $\Phi_2$, these have the same linear dependencies as $\Phi_1$.  $AG(2,3)$ is also known as the \emph{Hesse configuration} and thus $\Phi_2$ is also referred to as the \emph{Hesse SIC}.
\end{ex}

Another equiangular tight frame construction which we will be concerned with appears in \cite{FMT12} and consists of a tensor-like product of the adjacency matrix of a certain class of balanced incomplete block designs with particularly nice simplices. We first comment on a simple construction of simplices in arbitrary dimension. For any $m = (m_0, \hdots, m_s)$, a vector of integers $\geq 2$, the inner product of any two columns of $F_m$ --  which has all unimodular entries -- is zero.  Hence, if we set $\Phi$ to be any $(m-1) \times m$ submatrix of $F_m$ with the rows possibly multiplied by unimodulars, then the columns have norm $\sqrt{m-1}$ and the distinct columns have inner product $-1$, where $(-1)^2/(\sqrt{m-1})^4 = (m-(m-1))/((m-1)(m-1))$ (the first Rankin bound, see Theorem~\ref{thm:rankin}) and thus form a simplex by Definition~\ref{defn:simpl}.

\begin{thm}[\cite{FMT12}] \label{thm:SteinerETF}
Every $(v,k,1)$-BIBD generates an equiangular tight frame $\Phi$ called a \emph{Steiner equiangular tight frame} consisting of $n=v(k+v-2)/(k-1)$ vectors in $d = v(v-1)/[k(k-1)]$-dimensional space via a particular construction. Specifically, a special case of the construction is as follows:
\begin{enumerate}
\item Let $A^\top$ be the $\frac{v(v-1)}{k(k-1)} \times v$ transpose of the \ejk{incidence} matrix of a $(v,k,1)$-BIBD.
\item Fix $m=(m_0,m_1, \hdots, m_s)$ with $\abs{m} = \frac{k+v-2}{k-1}$.  For each $j \in \{1, \hdots, v\}$, define $\tilde{F}_j$ to be a $\left( \frac{k+v-2}{k-1}-1\right) \times \frac{k+v-2}{k-1}$ matrix which results from removing one row from $F_m$ and multiplying the rows with unimodulars. 
\item For each $j \in \{1, \hdots, v\}$, let $\Phi_j$ be the $\frac{v(v-1)}{k(k-1)} \times  \frac{k+v-2}{k-1}$ matrix obtained from the $j$th column of $A^\top$ by replacing each of the one-valued entries with a distinct row of $\tilde{F}_j$ and every zero-valued entry with a row of zeros.
\item Concatenate the $\Phi_j$'s horizontally to form $\Phi = \left( \Phi_1 \hdots \Phi_v\right)$.
\end{enumerate}
\end{thm}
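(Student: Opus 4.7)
The plan is to verify directly that the matrix $\Phi$ produced by the construction satisfies the four defining properties of an equiangular tight frame: correct dimensions, constant column norm, constant modulus of pairwise inner products, and tightness (equivalently, saturation of \eqref{eqn:Rankin1} for rank-one projections).

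First, I would unpack the arithmetic of the $(v,k,1)$-BIBD underlying $A^\top$. Such a design has $b = v(v-1)/[k(k-1)]$ blocks, so $A^\top$ is $b \times v$ and $\Phi$ has $d = b$ rows. Every point lies in exactly $r = (v-1)/(k-1)$ blocks, so each column of $A^\top$ has exactly $r$ ones. With $\abs{m} = (k+v-2)/(k-1) = r+1$ the matrix $\tilde F_j$ is of size $r \times \abs{m}$ and fits precisely into the support pattern of column $j$ of $A^\top$. Each $\Phi_j$ therefore contributes $\abs{m}$ columns, giving $n = v\abs{m}$ in total, matching the claim; and each column of $\Phi$ has exactly $r$ nonzero entries of modulus $1$, hence squared norm $r$. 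For equiangularity I would split into two cases: two distinct columns within the same $\Phi_j$ share row-support, and their inner product equals that of the corresponding columns of $\tilde F_j$, which (since unimodular row scalings preserve column-inner-products and full columns of $F_m$ are orthogonal) equals the negative of the contribution of the deleted row, a unimodular quantity. For columns from distinct $\Phi_{j_1}$ and $\Phi_{j_2}$, the row-supports are the block-sets containing $j_1$ and $j_2$; by the $\lambda = 1$ property these intersect in exactly one row, so the inner product reduces to a single product of two unimodulars, again of modulus $1$.

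It remains to check that, after normalization to unit vectors, the common inner product modulus $1/r$ equals the Welch bound $\sqrt{(n-d)/[d(n-1)]}$. This reduces to the elementary identity $v(k+v-2) - (k-1) = (v-1)(v+k-1)$, which after substitution yields $(n-d)/[d(n-1)] = (k-1)^2/(v-1)^2 = 1/r^2$, so \eqref{eqn:Rankin1} is saturated and $\Phi$ is an equiangular tight frame. The one point of real substance is the cross-block calculation: the crucial combinatorial input is precisely $\lambda = 1$, which forces any two cross-block row-supports to intersect in exactly one position, so that a single modulus $1/r$ captures all off-diagonal inner products. Everything else is routine counting coupled with the algebraic identity just displayed.
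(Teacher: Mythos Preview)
Your proof is correct and follows essentially the same route as the paper: the two-case split for equiangularity (within a $\Phi_j$ versus across distinct $\Phi_{j_1},\Phi_{j_2}$, the latter using $\lambda=1$ to force a single shared support row) is exactly the paper's argument. The only minor difference is that the paper verifies tightness by directly computing the frame operator $\sum_\kappa f_\kappa\otimes f_\kappa^\ast = \abs{m}\,I$ on each block and then summing the resulting diagonal projections using the BIBD replication count, whereas you instead invoke the equivalence (stated just after Definition~\ref{def:ETF}) between saturating \eqref{eqn:Rankin1} and being an ETF; both are standard and equally short.
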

\begin{proof}
The basic idea of the proof is as follows.  We note that $F_m$ is, up to scaling by $1/\sqrt{\abs{m}}$, a unitary matrix with all entries of modulus $1$.  Thus, if the columns of $\tilde{F}_j$ are denoted by $\{f_\kappa\}_{\kappa \in \bigoplus_{\ell = 0}^s \bZ_{m_\ell}}$, $\absip{f_\kappa}{f_{\widehat{\kappa}}}=1$ for all $\kappa \neq \widehat{\kappa}$.  Furthermore, the rows of $\tilde{F}_j$ have norm $\sqrt{\abs{m}}$ and are orthogonal, thus $\sum_{\kappa \in \bigoplus_{\ell = 0}^s \bZ_{m_\ell}} f_\kappa \otimes f_\kappa^\ast$ is $\abs{m}$ times the identity. The columns of the corresponding $\Phi_j$ also have unimodular inner product and the outer products of the columns sum to $\abs{m}$ times the diagonal projection onto the $j$th block of the balanced incomplete block design.  Since $A$ is an \ejk{incidence} matrix of a balanced incomplete block design, any two vectors from $\Phi_j$ and $\Phi_{\hat{j}}$ with $j \neq \hat{j}$ only share one common entry in support and thus have a unimodular inner product.  Further, due to a similar argument as in the proof of Proposition~\ref{prop:pcsize}, one can see that the sum of the outer products of all of the vectors in $\Phi$ is $\abs{m} (v-1)/(k-1)$ times the identity.  Hence by Definition~\ref{def:ETF}, $\Phi$ is a an equiangular tight frame.
\end{proof}
A  $(v,k,1)$-BIBD is also called a \emph{$(2,k,v)$-Steiner system}, hence the name of the construction.

\begin{ex}
Both $\Phi_1$ and $\Phi_2$ in Example~\ref{ex:hesse} can be generated as Steiner equiangular tight frames.  Consider the $(3,2,1)$-BIBD of all subsets of size $2$ of a set of size $3$.  One \ejk{incidence} matrix $A$ of this balanced incomplete block design is
\[
A = \left( \begin{array}{ccc} 1 & 0 & 1 \\ 1 & 1 & 0 \\ 0 & 1 & 1 \end{array} \right) \enskip \Rightarrow \enskip A^\top =  \left( \begin{array}{ccc} 1 & 1&  0  \\ 0 & 1 & 1  \\ 1& 0 & 1  \end{array} \right). 
\]
We fix a primitive $3$rd root of unity $\zeta$, label the rows of the corresponding $3 \times 3$ Fourier matrix $F_3$ as $\{ w_0, w_1, w_2\}$:
\[
F_3 = \left( \begin{array}{c} w_0 \\ w_1 \\ w_2 \end{array} \right) = \left(\begin{array}{ccc} 1 & 1& 1 \\ 1 & \zeta & \zeta^2 \\ 1 & \zeta^2 & \zeta \end{array} \right),
\]
and define $z = (\begin{array}{ccc} 0 & 0 & 0\end{array})$. Then 
\[
\Phi_1 = \left( \begin{array}{ccc} w_0 & w_0 & z \\ z & w_1 & w_1 \\ w_2 & z & w_2\end{array}\right) \enskip \textrm{and} \enskip \Phi_2 = \left( \begin{array}{ccc} w_0 & -w_0 & z \\ z & w_1 & -w_1 \\ -w_2 & z & w_2\end{array}\right), 
\]
as desired. \ejk{Instead of thinking about signing the rows of $F_3$, as in the construction of Steiner equiangular tight frames, one may consider this construction to arise in part from signing the elements of the incidence matrix in order to induce linear dependencies.  Following Example 3.2 in~\cite{FJMPW17}, we see that while $A$ is full rank, 
\[
\tilde{A} = \left( \begin{array}{ccc} 1 & 0 & -1 \\ -1 & 1 & 0 \\ 0 & -1 & 1 \end{array} \right) 
\]
is not.  In~\cite{FJMPW17}, the authors phase incidence matrices (without additionally expanding them with rows of a unitary matrix) to construct equiangular tight frames; however, their approach is different since the constructed vectors are only frames for their spans.} 
\end{ex}


We now generalize this example and construct an infinite family of equiangular tight frames which have both Gabor symmetry, i.e. are Weyl-Heisenberg invariant, and can be generated as Steiner equiangular tight frames.

\begin{defn}\label{defn:psi}
Let $m=(m_0, \hdots, m_s)$ be a vector of odd integers $\geq 3$ and $\abs{m} = \prod_{\ell=0}^s m_\ell$. Define $\psi$ to be the block vector defined by
\[
\psi =  \phi_{(0, 0, \hdots, 0)} \oplus \phi_{(0, 0, \hdots, 1)} \oplus  \hdots \oplus \phi_{((m_0-1)/2, (m_1-1)/2, \hdots, (m_s-3)/2)}  \in \bR^{\abs{m}(\abs{m}-1)/2},
\]
where the \ejk{$\phi_i \in \bR^{\abs{m}}$} are ordered lexicographically and for 
\[
i \in \mathcal{I}=\left\{ (0,0, \hdots, 0) , \hdots, ((m_0-1)/2, (m_1-1)/2, \hdots, (m_s-3)/2)\right\},
\]
\[
\left( \phi_{i}\right)_{j}=\left\{ \begin{array}{lr} 1; & j=i\\ -1; &j= m-i-\mathbbm{1}\\ 0; & \textrm{o.w.}\end{array}\right.
\]
We further define $\cG(m)$ to be the orbit of $\psi$ under $\pi(\bigoplus_{\ell=0}^s \bZ_{m_\ell} \times \bigoplus_{\ell=0}^s \bZ_{m_\ell})$.   We call such a $\cG(m)$ a {\it Gabor-Steiner equiangular tight frame}; the full name will be justified below. \footnote{Matlab code to construct $\cG(m_1)$ or $\cG(m_1,m_2)$ for odd integers $m_1, m_2 \geq 3$ may be found at \url{http://www.math.uni-bremen.de/cda/software.html}.}\end{defn}
\begin{ex}
$\Phi_2$ in Example~\ref{ex:hesse} is $\cG(3)$.
\end{ex}
\begin{ex}
Fix a primitive $10$th root of unity $\tau$.  Then $\zeta=\tau^2$ is a primitive $5$th root of unity and for all $k \in \bZ$, $\tau^{2k} = \zeta^k$ and $\tau^{2k+1}= -\zeta^{k-2}$.  Thus we may represent the Gabor-Steiner equiangular tight frame  $\cG(5)$ as
\[
\left(\begin{array}{ccccc|ccccc|ccccc|ccccc|ccccc} 0 & 0 & 0 & 0 & 0 & 5 & 5 & 5 & 5 & 5 & \ast & \ast & \ast & \ast & \ast & \ast & \ast & \ast & \ast & \ast & \ast & \ast & \ast & \ast & \ast  \\
\ast & \ast & \ast & \ast & \ast & 0 & 2 & 4 & 6 & 8 & 5 & 7 & 9 & 1 & 3 &\ast & \ast & \ast & \ast & \ast & \ast & \ast & \ast & \ast & \ast  \\ 
\ast & \ast & \ast & \ast & \ast & \ast & \ast & \ast & \ast & \ast & 0 & 4 & 8 & 2 & 6 & 5 & 9 & 3 & 7 & 1 & \ast & \ast & \ast & \ast & \ast  \\
\ast & \ast & \ast & \ast & \ast & \ast & \ast & \ast & \ast & \ast & \ast & \ast & \ast & \ast & \ast &0 & 6 & 2 & 8 & 4 & 5 & 1 & 7 & 3 & 9   \\
5 & 3 & 1 & 9 & 7 & \ast & \ast & \ast & \ast & \ast & \ast & \ast & \ast & \ast & \ast & \ast & \ast & \ast & \ast & \ast & 0 & 8 & 6 & 4 & 2  \\
\hline
\ast & \ast & \ast & \ast & \ast & \ast & \ast & \ast & \ast & \ast &  5 & 5 & 5 & 5 & 5 & \ast & \ast & \ast & \ast & \ast & 0 & 0 & 0 & 0 & 0\\
0 & 2 & 4 & 6 & 8 & \ast & \ast & \ast & \ast & \ast & \ast & \ast & \ast & \ast & \ast &  5 & 7 & 9 & 1 & 3 & \ast & \ast & \ast & \ast & \ast\\
\ast & \ast & \ast & \ast & \ast & 0 & 4 & 8 & 2 & 6 & \ast & \ast & \ast & \ast & \ast & \ast & \ast & \ast & \ast & \ast &5 & 9 & 3 & 7 & 1 \\
5 & 1 & 7 & 3 & 9 & \ast & \ast & \ast & \ast & \ast &  0& 6 & 2 & 8 & 4 & \ast & \ast & \ast & \ast & \ast & \ast & \ast & \ast & \ast & \ast\\
\ast & \ast & \ast & \ast & \ast &5 & 3 & 1 & 9 & 7 & \ast & \ast & \ast & \ast & \ast & 0 & 8 & 6 & 4 & 2 & \ast & \ast & \ast & \ast & \ast
\end{array} \right).
\]
where $\ast$ represents  $0$ and a number $k$ represents $\tau^k$.
\end{ex}
\ejk{
\begin{ex} Let $m = (3,3)$, then 
\[
\cI = \{ (0,0), (0,1), (0,2), (1,0) \},
\]
and
{\small
\[
\phi_{(0,0)} = \left( \begin{array}{c} 1 \\ 0 \\ 0 \\ 0 \\ 0 \\ 0 \\ 0 \\ 0 \\ -1 \end{array} \right), \enskip 
\phi_{(0,1)} = \left( \begin{array}{c} 0 \\ 1 \\ 0 \\ 0 \\ 0 \\ 0 \\ 0 \\ -1 \\ 0 \end{array} \right), \enskip 
\phi_{(0,2)} = \left( \begin{array}{c} 0 \\ 0 \\ 1 \\ 0 \\ 0 \\ 0 \\ -1 \\ 0 \\ 0 \end{array} \right), \enskip \textrm{and} \enskip
\phi_{(1,0)} = \left( \begin{array}{c} 0 \\ 0 \\ 0 \\ 1 \\ 0 \\ -1 \\ 0 \\ 0 \\ 0 \end{array} \right).
\]
}
In particular, the $\psi$ for $\cG(3,3)$ is the same as the $\psi$ for $\cG(9)$ but with different parameterization.
\end{ex}}

\begin{thm}\label{thm:Steiner}
Let $m=(m_0, \hdots, m_s)$ be a vector of odd integers $\geq 3$. Then for each $k \in \bigoplus_{\ell=0}^s \bZ_{m_\ell}$,
\[
\beta_k = \left\{ \pi(k, \kappa) \psi : \kappa \in \bigoplus_{\ell=0}^s \bZ_{m_\ell} \right\}
\]
is a simplex, and further $\cG(m)$ is a Steiner equiangular tight frame.
\end{thm}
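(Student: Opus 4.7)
The plan is to verify both claims by direct computation, exploiting the block-diagonal form $\pi(k,\kappa) = I_{(\abs{m}-1)/2} \otimes M^{(\kappa)}_m T^{(k)}_m$ and the explicit $\pm 1$ support pattern of $\psi$. Throughout, write $V := \bigoplus_{\ell=0}^{s}\mathbb{Z}_{m_\ell}$ and, for $r,j\in V$, let $\chi_r(j) := \prod_{\ell} \zeta_{m_\ell}^{r_\ell j_\ell}$ denote the usual character pairing.

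For the simplex claim, I would compute the pairwise inner products in $\beta_k$ directly. Since the representation is block-diagonal,
$$\pi(k,\kappa)^{*}\pi(k,\kappa') \;=\; I_{(\abs{m}-1)/2}\,\otimes\,(T^{(k)}_m)^{*}\, M^{(\kappa'-\kappa)}_m\, T^{(k)}_m,$$
whose tensor block is the diagonal operator with entry $\chi_{\kappa'-\kappa}(j+k)$ at position $j\in V$. Because $\phi_i$ has precisely two nonzero entries, at positions $i$ and $m-i-\mathbbm{1}$ with values $+1$ and $-1$, the $i$th direct summand contributes $\chi_{\kappa'-\kappa}(i+k) + \chi_{\kappa'-\kappa}(m-i-\mathbbm{1}+k)$. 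As $i$ ranges over $\mathcal{I}$, these two summands jointly index every element of $V$ except the involution-fixed midpoint $(m-\mathbbm{1})/2$, a single point because each $m_\ell$ is odd; character orthogonality then collapses the full character sum over $V$ to zero and one obtains
$$\langle \pi(k,\kappa)\psi,\,\pi(k,\kappa')\psi\rangle \;=\; -\chi_{\kappa'-\kappa}\!\bigl((m-\mathbbm{1})/2+k\bigr)\qquad(\kappa\ne\kappa'),$$
a unimodular complex number. Combined with $\|\pi(k,\kappa)\psi\|^2 = \|\psi\|^2 = \abs{m}-1$, a suitable $\kappa$-dependent unimodular rescaling of each column makes every pairwise inner product equal to $-1$, exhibiting $\beta_k$ as switching-equivalent to the regular simplex of $\abs{m}$ equiangular vectors in an $(\abs{m}-1)$-dimensional space, as Definition~\ref{defn:simpl} requires.

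For the Steiner claim, I would read off the combinatorial structure from the support pattern. The ambient space is $\mathbb{C}^{\mathcal{I}\times V}$, and because $T^{(k)}_m\phi_i$ is supported on $\{i+k,\, m-i-\mathbbm{1}+k\}$ and $M^{(\kappa)}_m$ is diagonal, the nonzero rows of $\pi(k,\kappa)\psi$ are exactly those $(i,j)$ with $k\in\{j-i,\,j+i+\mathbbm{1}\}$. The key observation is that the map $\mathcal{I}\times V \to \binom{V}{2}$ given by $(i,j)\mapsto\{j-i,\,j+i+\mathbbm{1}\}$ is a bijection: both sets have cardinality $\abs{m}(\abs{m}-1)/2$, and any nontrivial coincidence of images would force $i'=m-i-\mathbbm{1}$, contradicting that $i,i'\in\mathcal{I}$ both lie strictly before the midpoint in lex order. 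Under this bijection the row-support of $\beta_k$ is precisely the set of pair-blocks containing the point $k$ in the $(\abs{m},2,1)$-BIBD on $V$, matching the support pattern produced by Theorem~\ref{thm:SteinerETF} for this BIBD. A further direct calculation shows the nonzero entry of $\pi(k,\kappa)\psi$ at row $(i,j)$ equals $\pm\chi_\kappa(j)$ (positive when $j=i+k$, negative when $j=m-i-\mathbbm{1}+k$)—that is, $\pm$ the $(j,\kappa)$-entry of $F_m$. As $(i,j)$ runs over the support of $\beta_k$, $j$ exhausts $V\setminus\{(m-\mathbbm{1})/2+k\}$, so the nonzero block of $\Phi_k$ is $F_m$ with the $((m-\mathbbm{1})/2+k)$-th row removed and some rows sign-flipped—exactly an allowed $\tilde F_k$ in the construction of Theorem~\ref{thm:SteinerETF}.

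The main obstacle is bookkeeping rather than any single hard step: one must set up the row-to-pair bijection cleanly and track the Weyl-Heisenberg phases and index shifts modulo $m$ carefully throughout. The oddness hypothesis on each $m_\ell$ enters in several crucial places—to ensure $V$ has a unique involution-fixed midpoint, to make $\mathcal{I}$ correspond to precisely one half of $V\setminus\{(m-\mathbbm{1})/2\}$ so the $\phi_i$ are well defined, and to guarantee the character sum collapses to a single omitted term.
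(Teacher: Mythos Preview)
Your proof is correct and follows essentially the same route as the paper: identify the support pattern of the translated $\abs{\psi}$ with the incidence structure of the $(\abs{m},2,1)$-BIBD of all pairs, and identify the nonzero entries (coming from modulations) with rows of $F_m$ minus the single row indexed by $(m-\mathbbm{1})/2+k$, thereby matching the Steiner construction of Theorem~\ref{thm:SteinerETF}. The one minor difference is that you verify the simplex claim for $\beta_k$ separately via a direct inner-product computation, whereas the paper obtains it implicitly as a byproduct of the Steiner identification (each $\Phi_j$ in the Steiner construction is a simplex by design); your computation is essentially the $k=\tilde{k}$ case of Lemma~\ref{lem:ip}, which the paper proves later.
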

Thus it is justified to call $\cG(m)$ a Gabor-Steiner equiangular tight frame.
\begin{proof}
We first claim that the matrix with columns
\beq\label{eqn:shifts}
\left\{I_{(\abs{m}-1)/2} \otimes T_{m}^{(k)}\abs{\psi} : k \in \bigoplus_{\ell=0}^s \bZ_{m_\ell}\right\},
\eeq
where $\abs{\psi}$ yields the component-wise absolute values, is an incidence matrix of a $(\abs{m},2,1)$-BIBD. By construction, the difference in $\bigoplus_{\ell=0}^s \bZ_{m_\ell}$ of the support locations in each $\phi_i$ is different, namely, $\pm (2i+\mathbbm{1})$  Thus the rows of \eqref{eqn:shifts} are the indicator functions of each pair in $\bigoplus_{\ell=0}^s \bZ_{m_\ell}$ of difference $\pm (2i+\mathbbm{1})$ for 
\[
(i_0,\hdots,i_s) \in \left\{ (0,0, \hdots, 0) , \hdots, ((m_0-1)/2, (m_1-2)/2, \hdots, (m_s-3)/2)\right\},
\]
representing all possible differences between paired elements in $\bigoplus_{\ell=0}^s \bZ_\ell$. Hence \eqref{eqn:shifts} is the indicator matrix of an $(\abs{m},2,1)$-BIBD.

We now want to prove that the non-zero rows of the matrix with columns
\[
\left\{I_{(\abs{m}-1)/2} \otimes M_{m}^{(\kappa)}\abs{\psi} : \kappa \in \bigoplus_{\ell=0}^s \bZ_{m_\ell}\right\}
\]
are $\abs{m}-1$ distinct rows of $F_{m}$, namely all but the row $(m-\mathbbm{1})/2$. However, this follows immediately from the support set of $\psi$.  Thus, for a fixed $k \in \bigoplus_{\ell=0}^s \bZ_{m_\ell}$,  the non-zero rows of the matrix with columns
\[
\left\{I_{(\abs{m}-1)/2} \otimes (M_{m}^{(\kappa)}T_{m}^{(k)})\abs{\psi} : \kappa \in \bigoplus_{\ell=0}^s \bZ_{m_\ell}\right\}
\]
are the rows of $F_{m}$ except for the row $k+(m-\mathbbm{1})/2$.

We have thus proven the claim.
\end{proof}

The Gabor-Steiner equiangular tight frames are -- to the best of our knowledge -- the very first class of equiangular tight frames discovered which are simultaneously Steiner equiangular tight frames and are generated as the orbit of a group action (Theorem~\ref{thm:Steiner}). However, there is a class of equiangular tight frames which are not Gabor equiangular tight frames but may be viewed as being generated by a group action and are equivalent to certain Steiner equiangular tight frames.  Namely, let $G = \oplus_{\ell=0}^s \bZ_{m_\ell}$ be an arbitrary  finite abelian group.  Then the Fourier matrix $F_{m} = F_{m_0, \hdots, m_s}$ is the character table of $G$. If $D$ is a difference set~\cite{Handbook} $G$, then the submatrix $\Phi$ of $F_{m}$ consisting of the rows corresponding to $D$ and the columns corresponding to all of $\widehat{G}$ generates an equiangular tight frame \cite{DiFe07,StH03,XZG05,Ding06,GoRo09}. Inspired by \cite{BoPa15}, one may write this as a group action by letting $\psi \in \bC^{\abs{m}}$ be the vector that is $1$ on $D$ and $0$ on $G \backslash D$ and letting 
\[
 \left\{ M_m^{(\kappa)}: \kappa = (\kappa_0, \hdots, \kappa_s)\in \bigoplus_{\ell=0}^s \bZ_{m_\ell} \right\} \cong G
\]
act on $\psi$.  This results in a collection of vectors in $\bC^{\abs{m}}$ which look like the vectors in $\Phi$ padded with $\abs{m} - \abs{D}$ zero rows.  Thus the vectors form an equiangular tight frame for their span \cite{FMJ16}.  It was further shown in~\cite{JMF13} that equiangular tight frames formed from so-called McFarland difference sets are equivalent to certain Steiner equiangular tight frames.  The equivalence in~\cite{JMF13} between the two classes of equiangular tight frames is proven via a change of basis by an explicit unitary matrix.  However, in general, it is not necessary to define an explicit mapping to verify switching equivalence as one can perform a simple test on the inner products instead, as seen in Theorem~\ref{thm:TP}.

\begin{thm}[\cite{AFF11,ChWa16,FJKM17}] \label{thm:TP}
Let $m=(m_0, \hdots, m_s)$ be a vector of odd integers $\geq 3$ with $\abs{m} = \prod_{\ell=0}^s m_\ell$ and $\Phi = \left\{\varphi_{k,\kappa}: (k,\kappa) \in \left(\bigoplus_{\ell=0}^s\bZ_{m_\ell}\right)^2\right\}$, $\Psi= \left\{\psi_{k,\kappa}: (k,\kappa) \in \left(\bigoplus_{\ell=0}^s\bZ_{m_\ell}\right)^2\right\}$ be equiangular tight frames.  Further set the {\it triple products} to be
\[
\TP_\Phi((k,\kappa),(\tilde{k},\tilde{\kappa}),(\hat{k},\hat{\kappa})) = \ip{\varphi_{k,\kappa}}{\varphi_{\tilde{k},\tilde{\kappa}}}\ip{\varphi_{\tilde{k},\tilde{\kappa}}}{\varphi_{\widehat{k},\widehat{\kappa}}}\ip{\varphi_{\widehat{k},\widehat{\kappa}}}{\varphi_{k,\kappa}},
\]
where the subscript $\Phi$ is omitted when the equiangular tight frame is clear from context.

Then $\Phi$ and $\Psi$ are switching equivalent if and only if 
\[
\TP_\Phi((k,\kappa),(\tilde{k},\tilde{\kappa}),(\hat{k},\hat{\kappa})) = \TP_\Psi(\tau(k,\kappa),\tau(\tilde{k},\tilde{\kappa}),\tau(\hat{k},\hat{\kappa}))
\] 
for some permutation $\tau: \left(\bigoplus_{\ell=0}^s\bZ_{m_\ell}\right)^2 \rightarrow \left(\bigoplus_{\ell=0}^s\bZ_{m_\ell}\right)^2$ and all distinct $(k,\kappa),(\tilde{k},\tilde{\kappa}),(\hat{k},\hat{\kappa}) \in \left(\bigoplus_{\ell=0}^s\bZ_{m_\ell}\right)^2$.

For $\beta \subseteq \left(\bigoplus_{\ell=0}^s\bZ_{m_\ell}\right)^2$, $\left\{\varphi_{k,\kappa}: (k,\kappa) \in \beta\right\}$ is a simplex if and only if $\abs{\beta} = \abs{m}$ and all of the triple products are real and negative. An equiangular tight frame $\Phi$ contains a simplex if and only if the spark of $\Phi$ saturates the bound in \eqref{eqn:spark}. 
\end{thm}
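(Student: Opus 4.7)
The plan is to treat the three assertions of the theorem as separate claims tied together by the observation that the triple products $\TP$ are invariants of the switching equivalence relation of Definition~\ref{defn:simpl}.

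For the switching equivalence criterion, I would handle the two directions separately. Forward direction: if $\Psi = U\Phi P D$ with $D$ a unimodular diagonal, $P$ a permutation inducing $\tau$, and $U$ unitary, then $\ip{\psi_{\tau(a)}}{\psi_{\tau(b)}} = d_a \overline{d_b}\,\ip{\varphi_a}{\varphi_b}$, and multiplying three such inner products around a directed triangle makes the unimodular factors telescope and cancel, so the triple products coincide. Reverse direction: fix a reference index $a_0$ where $\ip{\varphi_{a_0}}{\varphi_a}\ne 0$ for all $a$ (a generic condition in an equiangular tight frame), define the unimodular scalar $d_a = \ip{\psi_{\tau(a_0)}}{\psi_{\tau(a)}}/\ip{\varphi_{a_0}}{\varphi_a}$, and use the hypothesis $\TP_\Phi(a_0,a,b)=\TP_\Psi(\tau(a_0),\tau(a),\tau(b))$ to verify that $d_a\overline{d_b}\,\ip{\varphi_a}{\varphi_b}=\ip{\psi_{\tau(a)}}{\psi_{\tau(b)}}$ for all $a,b$. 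The Gram matrices of $\{d_a\varphi_a\}$ and $\{\psi_{\tau(a)}\}$ thus agree, and two frames with identical Gram matrices that span their ambient spaces are related by a unitary.

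For the simplex characterization of triple products, one direction is immediate: a simplex is switching equivalent to a set with all pairwise inner products equal to a common negative real, which yields real and negative triple products, and triple products are preserved under switching by the first part just proved. For the converse, I would run a cocycle-trivialization argument. Write $\ip{\varphi_a}{\varphi_b} = c\,\zeta_{ab}$ with $c$ the common modulus (from the equiangularity of $\Phi$) and $\zeta_{ab}$ unimodular with $\zeta_{ba}=\overline{\zeta_{ab}}$; the hypothesis becomes $\zeta_{ab}\zeta_{bc}\zeta_{ca}=-1$ for every triangle $a,b,c\in\beta$. Fixing $a_0\in\beta$ and setting $\eta_{a_0}=1$, $\eta_a=-\zeta_{a_0 a}$ for $a\neq a_0$, the triangle condition verifies $\eta_a\overline{\eta_b}\zeta_{ab}=-1$ for all $a\ne b$, so the switched subset $\{\eta_a\varphi_a\}_{a\in\beta}$ has constant inner product $-c$. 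Saturating the first Rankin bound from Theorem~\ref{thm:rankin} with $|m|$ equiangular vectors at this common negative value then forces them to sum to zero and span a subspace of dimension $|m|-1$, which is precisely the simplex condition.

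The third claim follows by a dimension count combined with the preceding pieces. If $\Phi$ contains a simplex of size $|m|$, those vectors sum to zero after switching and are therefore linearly dependent, giving $\spark(\Phi)\le|m|$; the lower bound of Theorem~\ref{thm:spark} yields $\spark(\Phi)\ge 1+1/c=|m|$, so equality holds. Conversely, the equality case of Theorem~\ref{thm:spark} forces a linearly dependent set of $\spark(\Phi)$ vectors to have all pairwise inner products of maximal modulus $c$ and to sum to zero, from which the triple products along this subset are real and negative, hence by the second claim a simplex. The main obstacle in this plan is the cocycle trivialization step: verifying that the triangular constraints on $\zeta_{ab}$ are globally consistent across $\beta$ amounts to showing that the triangle cocycle is actually a coboundary on the complete graph restricted to $\beta$, which requires careful bookkeeping of base-point dependence and handling of any indices at which the reference inner product vanishes.
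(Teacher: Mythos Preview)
The paper does not actually prove this theorem; it is stated as a citation to \cite{AFF11,ChWa16,FJKM17} and used as a black box. Your sketch is essentially the standard argument from those references, so there is nothing to compare against.

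A few small points where your write-up overstates the difficulty or is slightly imprecise. First, your parenthetical ``a generic condition in an equiangular tight frame'' is unnecessary: in an ETF every off-diagonal Gram entry has the same nonzero modulus, so the reference inner product never vanishes and the base-point construction always works. Second, in the converse of the spark claim you say the minimally dependent set ``sums to zero''; what actually happens is that the null vector of its Gram matrix has unimodular entries (this follows from equality in the Gershgorin/triangle inequalities that underlie Theorem~\ref{thm:spark}), and only after switching by those unimodulars do the vectors sum to zero. That is exactly the step that produces the negative real triple products. Third, your closing worry about the cocycle step is misplaced: on the complete graph on $\beta$ the triangle constraint $\zeta_{ab}\zeta_{bc}\zeta_{ca}=-1$ is automatically a coboundary, and your explicit base-point formula $\eta_a=-\zeta_{a_0 a}$ already verifies it with no further consistency check needed beyond the one-line computation you indicated.
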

We note that triple products are also known as \emph{$3$-vertex Bargmann invariants} \cite{RAMS99}.  In order to study the structure of simplices in equiangular tight frames, we will analyze the triple products. 

\section{A family of Gabor equiangular tight frames associated with balanced incomplete block designs} \label{sec:BIBD}
With the help of Theorem~\ref{thm:TP} we will be able to prove the existence of infinite classes of equiangular tight frames which have binders that are balanced incomplete block designs.

\begin{lem}\label{lem:ip}
Let $m=(m_0, \hdots, m_s)$ be a vector of odd integers $\geq 3$, $\abs{m} = \prod_{\ell =0}^s m_{\ell}$, with $\zeta_{m_\ell}$ the primitive $m_\ell$th roots of unity generating $\cG(m)$. Further let $k, \tilde{k}, \kappa, \tilde{\kappa} \in \bigoplus_{\ell=0}^s \bZ_{m_\ell}$.  Then
\[
\ip{\pi(k,\kappa)\psi}{\pi(\tilde{k},\tilde{\kappa})\psi} = 
\abs{m}\delta_{k,\tilde{k}} \delta_{\kappa,\tilde{\kappa}}  -\prod_{\ell=0}^s \zeta_{m_\ell}^{(\kappa_\ell-\tilde{\kappa}_\ell)(\tilde{k}_\ell+k_\ell-1)/2},
\]
where for each $\ell$, $1/2$ in the exponent refers to the multiplicative inverse of $2$ modulo $m_\ell$ and $\delta_{x,y}$ is one if $x= y$ and zero otherwise.
\end{lem}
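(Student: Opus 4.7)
The plan is to exploit the block-diagonal structure of $\pi$ together with the two-point support of each $\phi_i$ and the tensor factorization of the Weyl operators across the direct summands $\bZ_{m_\ell}$. Since $\pi(k,\kappa) = I_{(|m|-1)/2} \otimes (M_m^{(\kappa)} T_m^{(k)})$ acts with the same Weyl factor on each of the $(|m|-1)/2$ blocks of $\psi = \bigoplus_{i \in \cI} \phi_i$, bilinearity of the inner product gives
\[
\ip{\pi(k,\kappa)\psi}{\pi(\tilde k, \tilde \kappa)\psi} = \sum_{i \in \cI} \ip{M_m^{(\kappa)} T_m^{(k)}\phi_i}{M_m^{(\tilde\kappa)}T_m^{(\tilde k)}\phi_i}.
\]
Writing $\phi_i = e_i - e_{m - i - \mathbbm{1}}$ expands each summand into four inner products of shifted, phased basis vectors. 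Using $M_m^{(\kappa)}T_m^{(k)} = \bigotimes_\ell M_{m_\ell}^{\kappa_\ell} T_{m_\ell}^{k_\ell}$ and $M_{m_\ell}^{\kappa_\ell}T_{m_\ell}^{k_\ell} e_{j_\ell} = \zeta_{m_\ell}^{\kappa_\ell(j_\ell + k_\ell)} e_{j_\ell + k_\ell}$, each of the four terms factors over $\ell$ into a Kronecker delta (imposing a coincidence of support) multiplied by a root of unity.

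Next, I would separate the two \emph{diagonal} contributions $\ip{A e_i}{B e_i}$ and $\ip{Ae_{m-i-\mathbbm 1}}{Be_{m-i-\mathbbm 1}}$, which vanish unless $k = \tilde k$, from the two \emph{off-diagonal} contributions $\ip{A e_i}{B e_{m-i-\mathbbm 1}}$ and $\ip{A e_{m-i-\mathbbm 1}}{B e_i}$. For the diagonal part, summing over $i \in \cI$ combines each index $i$ with its partner $m - i - \mathbbm 1$; since $|m|$ is odd, the involution $j \mapsto m - j - \mathbbm 1$ on $G = \bigoplus_\ell \bZ_{m_\ell}$ has the unique fixed point $j = (m-\mathbbm 1)/2 \notin \cI$, so the combined sum runs over $G \setminus \{(m-\mathbbm 1)/2\}$. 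The character orthogonality $\sum_{j_\ell \in \bZ_{m_\ell}} \zeta_{m_\ell}^{j_\ell(\kappa_\ell - \tilde \kappa_\ell)} = m_\ell \delta_{\kappa_\ell,\tilde\kappa_\ell}$ turns the full sum over $G$ into $|m|\delta_{\kappa,\tilde\kappa}$, and subtracting the missing midpoint term contributes exactly the phase $\prod_\ell \zeta_{m_\ell}^{(\kappa_\ell - \tilde\kappa_\ell)(k_\ell + (m_\ell-1)/2)}$, which on the locus $k = \tilde k$ agrees with $\prod_\ell \zeta_{m_\ell}^{(\kappa_\ell - \tilde\kappa_\ell)(\tilde k_\ell + k_\ell - 1)/2}$ because $(m_\ell-1)/2 \equiv -2^{-1} \pmod{m_\ell}$.

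For the off-diagonal contributions, the Kronecker deltas force the single indices $i = (\tilde k - k - \mathbbm 1)/2$ in the first term and $i = (k - \tilde k - \mathbbm 1)/2$ in the second (well-defined since each $m_\ell$ is odd). These two candidate indices are interchanged by the involution $i \mapsto m - i - \mathbbm 1$, so exactly one of them lies in $\cI$ when $k \ne \tilde k$; when $k = \tilde k$ both reduce to the excluded fixed point $(m-\mathbbm 1)/2$ and neither contributes. Substituting either surviving index into the phase factor yields the same value $\prod_\ell \zeta_{m_\ell}^{(\kappa_\ell - \tilde\kappa_\ell)(\tilde k_\ell + k_\ell - 1)/2}$, which enters with a minus sign from the expansion $\phi_i = e_i - e_{m-i-\mathbbm 1}$.

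Assembling the cases closes the argument: when $k = \tilde k$, only the diagonal part contributes, giving $|m|\delta_{\kappa,\tilde\kappa}$ minus the phase; when $k \ne \tilde k$, only the off-diagonal part contributes, giving minus the same phase. Both are captured uniformly by the claimed formula. The main obstacle is bookkeeping the pairing $i \leftrightarrow m - i - \mathbbm 1$ on $\cI$ and checking that the $k = \tilde k$ and $k \ne \tilde k$ sub-cases collapse into a single compact expression; once this is set up carefully, everything else reduces to a character sum and two elementary phase computations.
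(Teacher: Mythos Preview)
Your proposal is correct and follows essentially the same approach as the paper: both reduce to the block sum $\sum_{i\in\cI}\ip{M_m^{(\kappa)}T_m^{(k)}\phi_i}{M_m^{(\tilde\kappa)}T_m^{(\tilde k)}\phi_i}$, split into the cases $k=\tilde k$ and $k\neq\tilde k$, and in the latter isolate the unique $i^\ast\in\cI$ with $\tilde k-k=\pm(2i^\ast+\mathbbm 1)$ before reading off the phase. The only cosmetic difference is organizational: the paper branches on $k=\tilde k$ first and invokes the Fourier-row description of $\beta_k$ from Theorem~\ref{thm:Steiner} for that case, whereas you expand $\phi_i=e_i-e_{m-i-\mathbbm 1}$ into four terms, group them as ``diagonal'' versus ``off-diagonal,'' and recover the $k=\tilde k$ contribution via character orthogonality directly---but the underlying computation is the same.
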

\begin{proof}
Set $\mathcal{I}=\left\{ (0,0, \hdots, 0) , \hdots, ((m_0-1)/2, (m_1-1)/2, \hdots, (m_s-3)/2)\right\}$. 

We first consider the case that $\tilde{k} = k$.  Then both $\pi(k,\kappa)\psi$ and $\pi(\tilde{k},\tilde{\kappa})\psi$ are vectors in $\beta_k$ and the non-zero entries correspond to the $\kappa$ and $\tilde{\kappa}$ columns of $F_{m}$ without row $(m-\mathbbm{1})/2+k$.  If $\kappa = \tilde{\kappa}$, then the inner product is precisely $\abs{m}-1$.  Otherwise, the inner product is 
\[
- \prod_{\ell=0}^s \zeta_{m_\ell}^{[(m_\ell-1)/2 + k_\ell](\kappa_\ell-\tilde{\kappa}_\ell)} =   -\prod_{\ell=0}^s \zeta_{m_\ell}^{(\kappa_\ell-\tilde{\kappa}_\ell)(\tilde{k}_\ell+k_\ell-1)/2}.
\]

For $k \neq \tilde{k}$ there is a unique $i^\ast \in \mathcal{I}$ such that $\tilde{k} - k = 2i^\ast+ \mathbbm{1}$ or $-(2i^\ast + \mathbbm{1})$.  We compute
\begin{align*}
\lefteqn{\ip{\pi(k,\kappa)\psi}{\pi(\tilde{k},\tilde{\kappa})\psi}= \sum_{i \in \mathcal{I}} \ip{M_m^{(\kappa)}T_m^{(k)}\phi_i}{M_m^{(\tilde{\kappa})}T_m^{(\tilde{k})}\phi_i }}\\
&= \sum_{i \in \cI} \ip{\left(\begin{array}{lr}1; & j=i+k\\[1mm]  -1; & j=m-i-\mathbbm{1}+k\\[1mm] 0; & \textrm{o.w.} \end{array}\right)_j}{M_m^{(\tilde{\kappa}-\kappa)}\left(\begin{array}{lr}1; & j=i+\tilde{k}\\[1mm]  -1; & j=m-i-\mathbbm{1}+\tilde{k}\\[1mm] 0; & \textrm{o.w.} \end{array}\right)_j}\\
&=\! \sum_{i \in \cI}\!\! \ip{\!\!\left(\begin{array}{lr}\!1;\! \!& \!\! j=i+k\!\\[1mm]  \!-1;\!\! &\!\! j=m-i-\mathbbm{1}+k\!\\[1mm]\! 0;\! \!& \!\!\textrm{o.w.} \!\end{array}\right)_j\!\!}{\!\!\left(\begin{array}{lr}\!\prod_{\ell=0}^s \zeta_{m_\ell}^{(\tilde{\kappa}_\ell - \kappa_\ell)(i_\ell + \tilde{k}_\ell)};\! \!& \!\! j=i+\tilde{k}\!\\[1mm] \! -\prod_{\ell=0}^s \zeta_{m_\ell}^{(\tilde{\kappa}_\ell - \kappa_\ell)(-i_\ell -1+ \tilde{k}_\ell)};\! \!&\!\! j=m-i-\mathbbm{1}+\tilde{k}\!\\[1mm]\! 0; \!\!&\!\! \textrm{o.w.} \!\end{array}\right)_j\!\!}\\
&= \sum_{i \in \cI} \left\{\begin{array}{lr}  -\prod_{\ell=0}^s \zeta_{m_\ell}^{(\kappa_\ell - \tilde{\kappa}_\ell)(-i_\ell - 1 + \tilde{k}_\ell)}; & \tilde{k}-k =2i + \mathbbm{1} \\[1 mm]
-\prod_{\ell=0}^s \zeta_{m_\ell}^{(\kappa_\ell - \tilde{\kappa}_\ell)(i_\ell  + \tilde{k}_\ell)} ; & \tilde{k}-k =-(2i + \mathbbm{1}) \\  0; & \textrm{o.w.} \end{array}\right.\\
&=\left\{\begin{array}{lr}  -\prod_{\ell=0}^s \zeta_{m_\ell}^{(\kappa_\ell - \tilde{\kappa}_\ell)(-i^\ast_\ell - 1 + \tilde{k}_\ell)}; & \tilde{k}-k =2i^\ast + \mathbbm{1} \\[1 mm]
-\prod_{\ell=0}^s \zeta_{m_\ell}^{(\kappa_\ell - \tilde{\kappa}_\ell)(i^\ast_\ell  + \tilde{k}_\ell)} ; & \tilde{k}-k =-(2i^\ast + \mathbbm{1})\end{array}\right.\\
   &= \left\{\begin{array}{lr}  -\prod_{\ell=0}^s \zeta_{m_\ell}^{(\kappa_\ell -
 \tilde{\kappa}_\ell)(-(\tilde{k}_\ell -k_\ell -1)/2- 1 + \tilde{k}_\ell)}; &  \tilde{k}-k \in 2\cI + \mathbbm{1}\\[1 mm]
-\prod_{\ell=0}^s \zeta_{m_\ell}^{(\kappa_\ell - \tilde{\kappa}_\ell)((-\tilde{k}_\ell + k_\ell -1)/2 + \tilde{k}_\ell)} ; &  \tilde{k}-k \in -(2\cI + \mathbbm{1}) \end{array}\right.\\
 &=  -\prod_{\ell=0}^s \zeta_{m_\ell}^{(\kappa_\ell-\tilde{\kappa}_\ell)(\tilde{k}_\ell+k_\ell-1)/2}.
\end{align*}
\end{proof}

\begin{lem}\label{lem:TP}
Let $m=(m_0, \hdots, m_s)$ be a vector of odd integers $\geq 3$, $\abs{m} = \prod_{\ell =0}^s m_{\ell}$, with $\zeta_{m_\ell}$ the primitive $m_\ell$th roots of unity generating $\cG(m)$. Further let $(k, \kappa), (\tilde{k}, \tilde{\kappa}), (\widehat{k},  \widehat{\kappa}) \in \left(\bigoplus_{\ell=0}^s \bZ_{m_\ell}\right) \times \left( \bigoplus_{\ell=0}^s \bZ_{m_\ell}\right)$ be distinct.  Then
\begin{align}
\lefteqn{\hspace{-60mm}\TP((k,\kappa),(\tilde{k},\tilde{\kappa}),(\widehat{k},\widehat{\kappa})) = \ip{\pi(k,\kappa)\psi}{\pi(\tilde{k},\tilde{\kappa})\psi}\ip{\pi(\tilde{k},\tilde{\kappa})\psi}{\pi(\widehat{k},\widehat{\kappa})\psi}\ip{\pi(\widehat{k},\widehat{\kappa})\psi}{\pi(k,\kappa)\psi}} \nonumber\\
&\hspace{-60mm}= -\prod_{\ell=0}^s \zeta_{m_\ell}^{[k_\ell (\widehat{\kappa}_\ell-\tilde{\kappa}_\ell) + \tilde{k}_\ell( \kappa_\ell-\widehat{\kappa}_\ell) + \widehat{k}_\ell(\tilde{\kappa}_\ell-\kappa_\ell)]/2} \label{eqn:TP}
 \\
&\hspace{-60mm}= -\prod_{\ell=0}^s \zeta_{m_\ell}^{[(k_\ell \widehat{\kappa}_\ell - \widehat{k}_\ell \kappa_\ell) + (\tilde{k}_\ell \kappa_\ell - k_\ell \tilde{\kappa}_\ell) + (\widehat{k}_\ell \tilde{\kappa}_\ell -  \tilde{k}_\ell \widehat{\kappa}_\ell)]/2} \label{eqn:TP2}
\end{align}
\end{lem}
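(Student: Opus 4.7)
The plan is to obtain Lemma~\ref{lem:TP} as a direct consequence of Lemma~\ref{lem:ip} by multiplying three pairwise inner products and cancelling terms in the exponents.

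First I would observe that since the triple $(k,\kappa),(\tilde k,\tilde\kappa),(\widehat k,\widehat\kappa)$ is assumed distinct, \emph{pairwise} the two arguments in each of the three inner products differ, so the Kronecker-delta contribution $|m|\delta_{k,\tilde k}\delta_{\kappa,\tilde\kappa}$ in Lemma~\ref{lem:ip} vanishes for every factor. Hence each of the three inner products reduces to a single monomial $-\prod_{\ell}\zeta_{m_\ell}^{e_\ell}$, and multiplying them produces an overall sign of $(-1)^3=-1$ (matching the leading minus sign in \eqref{eqn:TP}) and a total exponent that is the sum over $\ell$ of the three contributions, each of the form $(\text{difference of }\kappa)\cdot(\text{sum of two }k\text{'s}-1)/2$.

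Next I would expand, component-wise in $\ell$, the sum
\[
(\kappa_\ell-\tilde\kappa_\ell)(\tilde k_\ell+k_\ell-1)+(\tilde\kappa_\ell-\widehat\kappa_\ell)(\widehat k_\ell+\tilde k_\ell-1)+(\widehat\kappa_\ell-\kappa_\ell)(k_\ell+\widehat k_\ell-1).
\]
The key observation is that all the ``diagonal'' terms $\kappa_\ell k_\ell$, $\tilde\kappa_\ell\tilde k_\ell$, $\widehat\kappa_\ell\widehat k_\ell$ cancel in pairs (each appears twice with opposite signs), and the linear corrections $-1$ multiplied by the differences $\kappa_\ell-\tilde\kappa_\ell$, $\tilde\kappa_\ell-\widehat\kappa_\ell$, $\widehat\kappa_\ell-\kappa_\ell$ sum to zero. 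What remains are exactly the six ``off-diagonal'' bilinear terms
\[
\kappa_\ell\tilde k_\ell-\tilde\kappa_\ell k_\ell+\tilde\kappa_\ell\widehat k_\ell-\widehat\kappa_\ell\tilde k_\ell+\widehat\kappa_\ell k_\ell-\kappa_\ell\widehat k_\ell,
\]
which, after dividing by $2$ (the inverses modulo each $m_\ell$ exist because the $m_\ell$ are odd), is precisely the exponent in~\eqref{eqn:TP}. Regrouping the same six terms as $(k_\ell\widehat\kappa_\ell-\widehat k_\ell\kappa_\ell)+(\tilde k_\ell\kappa_\ell-k_\ell\tilde\kappa_\ell)+(\widehat k_\ell\tilde\kappa_\ell-\tilde k_\ell\widehat\kappa_\ell)$ yields the equivalent form in~\eqref{eqn:TP2}.

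There is no real obstacle here: the proof is a bookkeeping exercise. The only point worth highlighting is that the division by $2$ in the exponent is legitimate because each $m_\ell$ is odd and therefore $2$ is invertible in $\mathbb{Z}_{m_\ell}$; this is why Definition~\ref{defn:psi} and Lemma~\ref{lem:ip} require the $m_\ell$ to be odd, and why the claim of Lemma~\ref{lem:TP} is well-posed as stated. Once the expansion is carried out, both stated forms of the triple product follow immediately, and the minus sign confirms that simplices, which require negative real triple products (Theorem~\ref{thm:TP}), correspond exactly to index triples for which the product over $\ell$ equals $1$.
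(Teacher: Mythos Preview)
Your proposal is correct and follows essentially the same route as the paper: apply Lemma~\ref{lem:ip} to each of the three pairwise inner products (the Kronecker deltas vanish by distinctness), collect the overall sign $(-1)^3=-1$, and simplify the resulting exponent component-wise. Your account of the cancellations (the diagonal $\kappa_\ell k_\ell$-type terms pair off, the $-1$ corrections telescope to zero, leaving the six off-diagonal bilinear terms) is in fact more explicit than the paper's, which simply writes down the combined exponent and states the simplified form.
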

\begin{proof}
We compute using Lemma~\ref{lem:ip}: 
\begin{align*}
\lefteqn{\ip{\pi(k,\kappa)\psi}{\pi(\tilde{k},\tilde{\kappa})\psi}\ip{\pi(\tilde{k},\tilde{\kappa})\psi}{\pi(\widehat{k},\widehat{\kappa})\psi}\ip{\pi(\widehat{k},\widehat{\kappa})\psi}{\pi(k,\kappa)\psi}}\\
&= -\prod_{\ell=0}^s \zeta_{m_\ell}^{[(\kappa_\ell-\tilde{\kappa}_\ell)(\tilde{k}_\ell + k_\ell -1) +  (\tilde{\kappa}_\ell-\widehat{\kappa}_\ell)(\widehat{k}_\ell + k_\ell -1) +(\widehat{\kappa}_\ell-\kappa_\ell)(k_\ell + \widehat{k}_\ell-1) ]/2}.
\end{align*}
Isolating and manipulating twice the exponent for the $\ell$ factor, we obtain
\begin{align*}
\lefteqn{\hspace{-30mm}(\kappa_\ell-\tilde{\kappa}_\ell)(\tilde{k}_\ell + k_\ell -1) +  (\tilde{\kappa}_\ell-\widehat{\kappa}_\ell)(\widehat{k}_\ell + k_\ell -1) +(\widehat{\kappa}_\ell-\kappa_\ell)(k_\ell + \widehat{k}_\ell-1)}\\
&\hspace{-30mm}= k_\ell (\widehat{\kappa}_\ell-\tilde{\kappa}_\ell) + \tilde{k}_\ell( \kappa_\ell-\widehat{\kappa}_\ell) + \widehat{k}_\ell(\tilde{\kappa}_\ell-\kappa_\ell),
\end{align*}
as desired.
\end{proof}

\begin{prop}\label{prop:ab}
Let $m=(m_0, \hdots, m_s)$ be a vector of odd integers $\geq 3$. For any $a, b \in \bigoplus_{\ell=0}^s \bZ_{m_\ell}$,
\[
\beta_{a,b} = \left\{ (k, \diag(a)\,k+b): k \in \bZ_m\right\},
\]
where $\diag(a)\, k = (a_0 k_0, \hdots a_s  k_s)$, is a simplex.  Thus $\cB(\cG(m))$ contains at least $\abs{m}(\abs{m}+1)$ simplices.
\end{prop}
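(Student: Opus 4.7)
The plan is to invoke the characterization in Theorem~\ref{thm:TP}: a subset of a Gabor equiangular tight frame indexed by $\beta\subseteq \bigl(\bigoplus_{\ell=0}^s \bZ_{m_\ell}\bigr)^{2}$ is a simplex precisely when $|\beta|=|m|$ and every triple product on $\beta$ is real and negative. First I would observe that the parametrization $k\mapsto (k,\diag(a)\,k+b)$ is injective because the first coordinate recovers $k$; hence $|\beta_{a,b}|=|m|$. Then for any three distinct points $(k,\kappa),(\tilde k,\tilde\kappa),(\widehat k,\widehat \kappa)\in\beta_{a,b}$, the linear relation $\kappa=\diag(a)\,k+b$ (and its tilde/hat analogues) gives $\widehat\kappa_\ell-\tilde\kappa_\ell=a_\ell(\widehat k_\ell-\tilde k_\ell)$, and similarly for the other two differences. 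Substituting into the exponent appearing in \eqref{eqn:TP} of Lemma~\ref{lem:TP}, the $\ell$-th bracket becomes
\[
a_\ell\bigl[k_\ell(\widehat k_\ell-\tilde k_\ell)+\tilde k_\ell(k_\ell-\widehat k_\ell)+\widehat k_\ell(\tilde k_\ell-k_\ell)\bigr]/2,
\]
which vanishes by the telescoping identity in the bracket (and the division by $2$ is well defined since each $m_\ell$ is odd). Every factor in the product equals $\zeta_{m_\ell}^{0}=1$, so the triple product equals $-1$. By Theorem~\ref{thm:TP}, $\beta_{a,b}$ is a simplex.

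For the counting claim, I would show that distinct pairs $(a,b)\ne(a',b')$ yield distinct simplices $\beta_{a,b}\ne\beta_{a',b'}$: evaluating the parametrization at $k=0$ recovers $b$ from the second coordinate, and once $b$ is known, evaluating at coordinate unit vectors recovers each $a_\ell$. This gives $|m|^{2}$ distinct simplices of the form $\beta_{a,b}$. These are moreover distinct from the $|m|$ simplices $\beta_k=\{(k,\kappa):\kappa\in\bZ_m\}$ of Theorem~\ref{thm:Steiner}, since the first coordinates of points in any $\beta_{a,b}$ take every value in $\bZ_m$ exactly once, while the first coordinates of points in $\beta_k$ are constant. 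Summing gives the stated lower bound $|m|^{2}+|m|=|m|(|m|+1)$.

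The main step is really just the vanishing of the exponent, and there is no genuine obstacle beyond recognizing the right algebraic identity; the content is geometric: the sets $\beta_{a,b}$ are precisely the affine lines of non-vertical slope in the symplectic grid $\bigl(\bigoplus_{\ell=0}^s\bZ_{m_\ell}\bigr)^{2}$, complementing the vertical lines $\beta_k$ from Theorem~\ref{thm:Steiner}. This mirrors the appearance of $AG(2,3)$ in the Hesse SIC of Example~\ref{ex:hesse}, and suggests that the $|m|(|m|+1)$ simplices should be interpreted as the lines of an affine geometry whenever $m$ is such that $\bigoplus_\ell \bZ_{m_\ell}$ is a field, which is exactly the setup of Theorem~\ref{thm:AG} to follow.
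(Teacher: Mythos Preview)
Your proposal is correct and follows essentially the same approach as the paper: substitute the parametrization $\kappa=\diag(a)k+b$ into the exponent of Lemma~\ref{lem:TP}, observe it telescopes to zero so every triple product equals $-1$, and invoke Theorem~\ref{thm:TP}. Your version is in fact more thorough than the paper's, which simply asserts ``There are $|m|^2$ such $\beta_{a,b}$'' without explicitly checking injectivity of $(a,b)\mapsto\beta_{a,b}$ or disjointness from the $\beta_k$'s; your arguments for these (recover $b$ at $k=0$, recover $a_\ell$ at coordinate unit vectors, contrast the first-coordinate behavior) are correct and worth keeping.
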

\begin{proof}
Every set of 3 distinct points in $\beta_{a,b}$ have the form
\[
(k, \diag(a)\,k +b),\, (\tilde{k}, \diag(a)\,\tilde{k} +b),\, (\widehat{k}, \diag(a)\,\widehat{k} +b)
\]
with $k\neq \tilde{k} \neq \widehat{k} \neq k$. Thus, for each $\ell \in \{0,\hdots, s\}$
\begin{align*}
k_\ell (\widehat{\kappa}_\ell-\tilde{\kappa}_\ell ) + \tilde{k}_\ell(\kappa_\ell-\widehat{\kappa}_\ell) + \widehat{k}_\ell(\tilde{\kappa}_\ell-\kappa_\ell) &= k_\ell a_\ell (\widehat{k}_\ell- \tilde{k}_\ell ) + \tilde{k}_\ell a_\ell(k_\ell-\widehat{k}_\ell ) + \widehat{k}_\ell a_\ell (\tilde{k}_\ell-k_\ell ) = 0.
\end{align*}
Using Lemma~\ref{lem:TP}, we obtain
\[
\TP((k,\kappa),(\tilde{k},\tilde{\kappa}),(\widehat{k},\widehat{\kappa})) =-1.
\]
Thus is follows from Theorem~\ref{thm:TP} that $\beta_{a,b}$ is a simplex. There are $\abs{m}^2$ such $\beta_{a,b}$, and it follows from Theorem~\ref{thm:Steiner} that the binder also contains $\abs{m}$ $\beta_k$'s.
\end{proof}

\begin{prop}\label{prop:prod}
Let $m=(m_0, \hdots, m_s)$ be a vector of pairwise relatively prime odd integers $\geq 3$. If for each $\ell \in \{ 0, \hdots, s\}$, $\cB_\ell$ is the binder for $\cG(m_\ell)$, then the binder of $\cG(m)$ is
\[
\cB = \Big\{ \big\{ \left((k_0, \hdots, k_s),(\kappa_0, \hdots, \kappa_s)\right): \forall \ell \in \{0, \hdots, s\} \, (k_\ell,\kappa_\ell) \in \beta_\ell \big\}: \forall \ell \in \{0, \hdots, s\} \, \beta_\ell \in \cB_\ell\Big\}.
\] 
Thus if $m$ is an odd integer $\geq 3$ with prime factorization $\prod_i p_i^{e_i}$, $\cG(m)$ has at least $\prod_i p_i^{e_i} (p_i^{e_i} +1)$ simplices.
\end{prop}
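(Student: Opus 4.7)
The plan is to establish the claimed equality of binders by two inclusions, then deduce the counting bound. Throughout I would rely on the characterization of simplices via triple products (Theorem~\ref{thm:TP}) and the explicit formula for the triple products of $\cG(m)$ (Lemma~\ref{lem:TP}).

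\emph{Inclusion $\supseteq$.} Given $\beta_\ell\in\cB_\ell$ for each $\ell$, I would set $B$ to be the product in $(\bigoplus_\ell\bZ_{m_\ell})^2$, which has size $\prod_\ell m_\ell=\abs{m}$. For any three distinct points of $B$, Lemma~\ref{lem:TP} factors the triple product as $-\prod_\ell\zeta_{m_\ell}^{E_\ell/2}$, where each $E_\ell$ depends only on the $\ell$-th coordinates. For each $\ell$ either the three $\ell$-projections are all distinct, so they form a triple inside the simplex $\beta_\ell\in\cB(\cG(m_\ell))$ and Lemma~\ref{lem:TP} applied to $\cG(m_\ell)$ gives $\zeta_{m_\ell}^{E_\ell/2}=1$, or two coincide and a direct substitution yields $E_\ell=0$. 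Hence every triple product is $-1$ and $B$ is a simplex by Theorem~\ref{thm:TP}.

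\emph{Inclusion $\subseteq$.} Conversely, let $B$ be a simplex in $\cG(m)$. By Lemma~\ref{lem:TP} and Theorem~\ref{thm:TP}, $\prod_\ell\zeta_{m_\ell}^{E_\ell/2}=1$ for each distinct triple in $B$. Pairwise coprimality of the $m_\ell$ is critical here: the cyclic groups $\langle\zeta_{m_\ell}\rangle\subset\bC^\times$ have pairwise coprime orders, so a product of such roots of unity equals $1$ if and only if each factor does; this forces $E_\ell\equiv 0\pmod{m_\ell}$ for every $\ell$ and every triple. Setting $\beta_\ell=\pi_\ell(B)$ (the coordinate projection), every triple of distinct points of $\beta_\ell$ lifts to a triple of distinct points of $B$ and inherits $E\equiv 0\pmod{m_\ell}$. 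The key combinatorial step is to bound $\abs{\beta_\ell}\le m_\ell$: translating so $0\in\beta_\ell$, the condition applied to triples $(0,p,q)$ reads $p\times q\equiv 0\pmod{m_\ell}$, where $\times$ is the standard alternating form on $\bZ_{m_\ell}^2$; thus the subgroup $H\subseteq\bZ_{m_\ell}^2$ generated by $\beta_\ell$ is $\times$-isotropic, and the nondegeneracy identity $\abs{H}\cdot\abs{H^\perp}=m_\ell^2$ with $H\subseteq H^\perp$ yields $\abs{H}\le m_\ell$. Combining the injection $B\hookrightarrow\prod_\ell\beta_\ell$ with $\abs{B}=\abs{m}=\prod_\ell m_\ell\ge\prod_\ell\abs{\beta_\ell}$ forces all inequalities to be equalities, so $B=\prod_\ell\beta_\ell$, $\abs{\beta_\ell}=m_\ell$, and each $\beta_\ell$ is a simplex in $\cG(m_\ell)$ by Theorem~\ref{thm:TP}.

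\emph{Counting.} For a scalar odd $m=\prod_i p_i^{e_i}$, I would identify $\cG(m)$ with $\cG((p_1^{e_1},\ldots,p_r^{e_r}))$ via the CRT isomorphism of the index group, noting that Lemma~\ref{lem:TP} shows triple products are preserved under the reindexing, so binders are in bijection. The equality above then yields $\abs{\cB(\cG(m))}=\prod_i\abs{\cB(\cG(p_i^{e_i}))}\ge\prod_i p_i^{e_i}(p_i^{e_i}+1)$ by Proposition~\ref{prop:ab}. The main obstacle is the isotropic-subgroup bound $\abs{\beta_\ell}\le m_\ell$: this is immediate for prime $m_\ell$ (where every collinear set lies on an affine line), but for composite $m_\ell$ collinear sets need not be affine lines, as illustrated by $\{0,p,2p\}^2\subset\bZ_{p^2}^2$, so the symplectic-duality argument appears to be necessary.
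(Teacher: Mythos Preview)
Your argument is correct and shares the paper's one key observation: pairwise coprimality of the $m_\ell$ forces each factor $\zeta_{m_\ell}^{E_\ell/2}$ in the triple-product formula of Lemma~\ref{lem:TP} to equal $1$ separately. The paper's proof, however, is a two-sentence sketch: after noting this factorization it simply asserts that ``the simplices in $\cB$ must be Cartesian products of the simplices in the $\cB_\ell$'' and moves on to the CRT counting. It provides no argument for why a simplex $B$ in $\cG(m)$ must decompose as a product, nor why each projection $\beta_\ell=\pi_\ell(B)$ has size exactly $m_\ell$.

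Your symplectic-isotropic step supplies precisely what is missing. Translating so $0\in\beta_\ell$, the vanishing of $E_\ell$ on triples $(0,p,q)$ gives $[p,q]=0$ for all $p,q\in\beta_\ell$, so the subgroup $H=\langle\beta_\ell\rangle$ is isotropic for the standard alternating form on $\bZ_{m_\ell}^2$; nondegeneracy then yields $|H|\cdot|H^\perp|=m_\ell^2$ and $H\subseteq H^\perp$ forces $|\beta_\ell|\le|H|\le m_\ell$. Combined with $|B|=\prod_\ell m_\ell$ and the injection $B\hookrightarrow\prod_\ell\beta_\ell$, this pins down $B=\prod_\ell\beta_\ell$ with each $\beta_\ell\in\cB_\ell$. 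As you correctly observe, this isotropy bound is not optional for composite $m_\ell$: collinear subsets of $\bZ_{m_\ell}^2$ need not sit on affine lines, so there is no shortcut via Theorem~\ref{thm:AG}. Your proof is thus not a different route but a genuine completion of the paper's outline; the $\supseteq$ direction and the CRT identification for scalar $m$ are handled the same way in both. One small wording point: under CRT the triple products of $\cG(m)$ and $\cG((p_1^{e_1},\ldots,p_r^{e_r}))$ need not be literally equal (the primitive roots may differ), but the condition $TP=-1$, i.e.\ $E\equiv 0$, is preserved, which is all you need.
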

\begin{proof}
If $m_i,\, m_j$ are relatively prime and $\zeta_{m_i}$, $\zeta_{m_j}$ are primitive roots of unity, then 
\[
\zeta_{m_i}^{k_i} \zeta_{m_j}^{k_j} = 1 \enskip \textrm{if and only if} \enskip m_i \big\vert k_i, \, m_j \big\vert k_j.
\]
Thus, with the hypotheses above,
\[
-\prod_{\ell=0}^s \zeta_{m_\ell}^{[k_\ell ( \widehat{\kappa}_\ell-\tilde{\kappa}_\ell) + \tilde{k}_\ell(\kappa_\ell-\widehat{\kappa}_\ell) + \widehat{k}_\ell(\tilde{\kappa}_\ell-\kappa_\ell)]/2}
\]
can only be equal to negative one if each of the terms is equal to one and the simplices in $\cB$ must be Cartesian products of the simplices in the $\cB_\ell$.

The lower bound on $\abs{\cB(\cG(m))}$ for $m=\prod_i p_i^{e_i}$ follows from the fact that $\bZ_m \cong \oplus_{i} \bZ_{p_i^{e_i}}$ since the prime factors are trivially relatively prime.
\end{proof}

\begin{prop}\label{prop:Ab}
Let $m$ an odd integer $\geq 3$ and $(m,\hdots,m)$ length $s+1$. Then each $A \in \SYM_{s+1}(\bZ_m)$ and $b \in \bZ_m^{s+1}$ defines a simplex in $\cG((m, \hdots, m))$, namely
\[
\beta_{A,b} = \Big\{\left( k, Ak + b\right): k \in   \bZ_m^{s+1}\Big\}.
\]
Thus $\cB(\cG(m,m, \hdots, m))$ has at least $\abs{m}\left(\abs{m}^{(s+2)(s+1)/2}+1\right)$ simplices.
\end{prop}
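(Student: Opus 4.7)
The plan is to verify directly that every triple product within $\beta_{A,b}$ equals $-1$, and then to invoke Theorem~\ref{thm:TP} to conclude that $\beta_{A,b}$ is a simplex. Since all moduli coincide ($m_\ell = m$), Lemma~\ref{lem:TP} collapses the product over $\ell$ into a single power of $\zeta_m$, so the question reduces to showing that the exponent sum vanishes modulo $m$ for any three distinct elements of $\beta_{A,b}$. This is a finite generalization of Proposition~\ref{prop:ab}: replacing $\diag(a)$ by an arbitrary symmetric matrix $A$ is exactly what is needed for the cancellation to still go through.

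For the computation, pick three distinct points $(k, Ak+b)$, $(\tilde k, A\tilde k+b)$, $(\hat k, A\hat k+b)$ in $\beta_{A,b}$, so $k,\tilde k,\hat k$ are distinct in $\bZ_m^{s+1}$. Substituting $\hat\kappa - \tilde\kappa = A(\hat k - \tilde k)$ (and cyclic analogues) into the exponent of \eqref{eqn:TP} and summing over $\ell$ gives
\[
k^{\!\top} A(\hat k - \tilde k) + \tilde k^{\!\top} A(k-\hat k) + \hat k^{\!\top} A(\tilde k - k),
\]
which regroups as
\[
\bigl[k^{\!\top} A\hat k - \hat k^{\!\top} A k\bigr] + \bigl[\tilde k^{\!\top} A k - k^{\!\top} A\tilde k\bigr] + \bigl[\hat k^{\!\top} A\tilde k - \tilde k^{\!\top} A\hat k\bigr].
\]
Each bracket vanishes because $A = A^{\!\top}$, so the sum is $0 \bmod m$. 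Since $m$ is odd, $2$ is invertible in $\bZ_m$, so dividing by $2$ preserves this vanishing. By Lemma~\ref{lem:TP} the triple product is $-\zeta_m^0 = -1$, and since $|\beta_{A,b}| = m^{s+1} = |m|$, Theorem~\ref{thm:TP} yields the simplex claim.

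For the counting, I would argue that distinct pairs $(A,b)$ produce distinct simplices: the set $\beta_{A,b}$ is the graph of the affine map $k \mapsto Ak+b$ on $\bZ_m^{s+1}$, and such an affine map is recovered from its graph by evaluating at $0$ (giving $b$) and then at standard basis vectors (giving the columns of $A$). There are $m^{(s+1)(s+2)/2}$ symmetric matrices over $\bZ_m$ and $|m| = m^{s+1}$ choices of $b$, so this construction provides $|m| \cdot m^{(s+1)(s+2)/2}$ simplices. These are moreover disjoint from the $|m|$ Steiner simplices $\beta_k$ of Theorem~\ref{thm:Steiner}, whose first coordinate is constant, whereas in $\beta_{A,b}$ the first coordinate ranges over all of $\bZ_m^{s+1}$. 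Adding gives the stated lower bound $|m|\bigl(m^{(s+2)(s+1)/2}+1\bigr)$ on $|\cB(\cG(m,\dots,m))|$.

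There is no real obstacle here — the essence of the argument is the simple identity $x^{\!\top} A y = y^{\!\top} A x$ for symmetric $A$, which is precisely what causes the three antisymmetric contributions in the triple product exponent to cancel. The only minor point to be checked is the invertibility of $2$ modulo $m$, which is automatic since $m$ is odd; without the symmetry of $A$ the cancellation would fail, so this is the unique algebraic input distinguishing Proposition~\ref{prop:Ab} from the diagonal case of Proposition~\ref{prop:ab}.
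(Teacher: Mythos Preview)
Your proof is correct and follows essentially the same approach as the paper: both reduce to showing the exponent in Lemma~\ref{lem:TP} vanishes by rewriting it as $\ip{k}{A(\hat k-\tilde k)}+\ip{\tilde k}{A(k-\hat k)}+\ip{\hat k}{A(\tilde k-k)}$ and using $A=A^\top$. Your counting argument is in fact more explicit than the paper's, which simply asserts the bound ``comes from counting the symmetric affine transformations''; your observation that the graph of $k\mapsto Ak+b$ determines $(A,b)$ and is disjoint from each $\beta_k$ fills in exactly what the paper leaves implicit.
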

\begin{proof}
We first note that $\bigoplus_{\ell=0}^s \bZ_m \cong \bZ_m^{s+1}$. Letting $(k,\kappa)$, $(\tilde{k},\tilde{\kappa})$, $(\widehat{k},\widehat{\kappa})$ be arbitrary distinct points in $\beta_{A,b}$, we compute the triple product:
\begin{align*}
\TP((k,\kappa),(\tilde{k},\tilde{\kappa}),(\widehat{k},\widehat{\kappa})) &= -\prod_{\ell=0}^s \zeta_{m}^{[k_\ell (\widehat{\kappa}_\ell-\tilde{\kappa}_\ell ) + \tilde{k}_\ell( \kappa_\ell-\widehat{\kappa}_\ell) + \widehat{k}_\ell(\tilde{\kappa}_\ell-\kappa_\ell )]/2} = - \zeta_m^{\textrm{EXP}},
\end{align*}
where 
\begin{align*}
2 \cdot \textrm{EXP} &= \sum_{\ell=0}^s \left( k_\ell (\widehat{\kappa}_\ell-\tilde{\kappa}_\ell ) + \tilde{k}_\ell( \kappa_\ell-\widehat{\kappa}_\ell) + \widehat{k}_\ell(\tilde{\kappa}_\ell-\kappa_\ell )\right)\\
&= \ip{k}{A(\widehat{k}-\tilde{k} )} +  \ip{\tilde{k}}{A(k-\widehat{k})} +  \ip{\widehat{k}}{A( \tilde{k}-k)}\\
&=  \ip{Ak}{\widehat{k}}-\ip{k}{A\tilde{k}} +   \ip{A\tilde{k}}{k} -\ip{\tilde{k}}{A\widehat{k}}+  \ip{A\widehat{k}}{\tilde{k}}- \ip{\widehat{k}}{Ak}= 0,
\end{align*}
as desired.  The lower bound on $\abs{\cB(\cG(m,m,\hdots,m))}$ comes from counting the symmetric affine transformations over $\bZ_m^{s+1}$.
\end{proof}

In the coming proofs, we will frequently need to fit a line to two points in the plane $\bZ_p^2$.  Thus for $(k, \kappa), (\tilde{k},\tilde{\kappa}) \in \bZ_p^2$ with $k \neq \tilde{k}$, we define
\[
(a,b) = SI((k,\kappa), (\tilde{k},\tilde{\kappa})) \in \bZ_p^2
\]
to be the unique (since $\bZ_p$ is a field) slope and intercept such that $\kappa = ak +b$ and $\tilde{\kappa} = a\tilde{k} +b$. For $a_0, a_1, a_2 \in R$, we define $\Sym(a_0,a_1,a_2) \in \SYM_2(R)$ to be 
\[
\Sym(a_0,a_1,a_2) = \left( \begin{array}{cc} a_0 & a_1 \\ a_1 & a_2 \end{array} \right).
\]

\begin{thm}\label{thm:AG}
Let $p$ be an odd prime. Then the binder $\cB$ of $\cG(p)$ is precisely $AG(2,p)$ (see Definition~\ref{defn:affgeo}).
\end{thm}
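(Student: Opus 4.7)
The plan is to show the two inclusions separately, using the triple-product characterization of simplices from Theorem~\ref{thm:TP}. Note first that since $\abs{m}=p$ here, a simplex must have exactly $p$ elements. The $p(p+1)$ affine lines of $AG(2,p)$ split into the $p$ ``vertical'' lines $\{k\}\times \bZ_p$ and the $p^2$ ``non-vertical'' lines $\{(k,ak+b):k\in\bZ_p\}$. The vertical lines are already known to be simplices by Theorem~\ref{thm:Steiner}, and the non-vertical ones are simplices by Proposition~\ref{prop:ab} (with $s=0$). So one containment $AG(2,p)\subseteq \cB(\cG(p))$ is immediate.

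For the reverse containment, I would start from the triple-product formula of Lemma~\ref{lem:TP}, which in this single-factor case reads
\[
\TP\!\bigl((k,\kappa),(\tilde{k},\tilde{\kappa}),(\widehat{k},\widehat{\kappa})\bigr)
\;=\;-\,\zeta_p^{\,[\,k(\widehat{\kappa}-\tilde{\kappa})+\tilde{k}(\kappa-\widehat{\kappa})+\widehat{k}(\tilde{\kappa}-\kappa)\,]/2}.
\]
Because $p$ is an odd prime, the only real $p$-th root of unity is $1$; consequently the triple product is real and negative if and only if the exponent vanishes modulo $p$. A direct expansion identifies this exponent (up to sign) with the $3\times 3$ determinant
\[
\det\!\begin{pmatrix} k & \kappa & 1\\ \tilde{k} & \tilde{\kappa} & 1\\ \widehat{k} & \widehat{\kappa} & 1\end{pmatrix},
\]
which over the field $\bZ_p$ vanishes exactly when the three points $(k,\kappa),(\tilde{k},\tilde{\kappa}),(\widehat{k},\widehat{\kappa})$ are collinear in $AG(2,p)$.

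Combining this with Theorem~\ref{thm:TP}, a subset $\beta\subseteq\bZ_p\times\bZ_p$ of size $p$ is a simplex if and only if every three of its points are collinear in $AG(2,p)$. The final step is the standard affine-geometry observation: if a set of at least three points has every triple collinear, then fixing any two of them determines a line on which all the remaining points must lie; thus $\beta$ is contained in, and by cardinality equals, a single affine line of $AG(2,p)$. This proves $\cB(\cG(p))\subseteq AG(2,p)$ and hence equality.

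I expect the only delicate point to be the correct accounting of the factor $1/2$ in the exponent (which is well-defined since $2$ is invertible mod an odd prime $p$) and confirming that ``triple product real and negative'' forces the exponent to be $0$ mod $p$ rather than some other congruence; once that is cleanly recorded, the remainder is bookkeeping and the classical collinearity argument.
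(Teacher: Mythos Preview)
Your proposal is correct and follows the same overall strategy as the paper: establish $AG(2,p)\subseteq\cB$ via Theorem~\ref{thm:Steiner} and Proposition~\ref{prop:ab}, then use Lemma~\ref{lem:TP} together with Theorem~\ref{thm:TP} to show that a triple product equals $-1$ precisely when the three points are collinear, and conclude by cardinality.

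The one genuine difference is in how you handle the ``triple product equals $-1$ $\Leftrightarrow$ collinear'' step. The paper argues by a case split on whether $k=\tilde{k}$ or $k\neq\tilde{k}$, in each case algebraically factoring the exponent to force the third point onto the line determined by the first two. Your observation that the exponent (up to the invertible factor $2^{-1}$ and a sign) is exactly the $3\times3$ collinearity determinant over $\bZ_p$ compresses both cases into a single line and makes the geometry transparent; it also makes the final ``every triple collinear $\Rightarrow$ all on one line'' step immediate. The paper's case analysis, on the other hand, is closer in form to the longer case analysis needed for $\cG(p,p)$ in Theorem~\ref{thm:AG2}, so it serves as a warm-up for that argument. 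Either route is fine; yours is the more conceptual packaging of the same computation.
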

\begin{proof}
It follows from Theorem~\ref{thm:Steiner} that for each $k \in \bZ_p$, 
\[
\beta_k = \{ (k, \kappa): \kappa \in \bZ_p\} \in \cB.
\]
These are the vertical lines.  It follows as a corollary to either Proposition~\ref{prop:ab} or~\ref{prop:Ab} that for each $a, b \in \bZ_p$
\[
\beta_{a,b} = \{ (k, ak+b): k \in \bZ_p\} \in \cB,
\]
that is, the non-vertical lines.  Hence $AG(2,p) \subseteq \cB$.

To prove inclusion the other direction, let $\beta \in \cB$. We will actually prove a stronger statement than $\cB \subseteq AG(2,p)$, namely that all triple products which yield a $-1$ belong to a block in $AG(2,p)$. To this end choose distinct $(k, \kappa), (\tilde{k}, \tilde{\kappa}), (\widehat{k},  \widehat{\kappa}) \in \beta$. There are two cases: $k = \tilde{k}$ and $k \neq \tilde{k}$.

We plug $k = \tilde{k}$ into \eqref{eqn:TP} from Lemma~\ref{lem:TP}:
\[
-1 = \TP((k,\kappa),(\tilde{k},\tilde{\kappa}),(\widehat{k},\widehat{\kappa})) = - \zeta_{p}^{[k(\widehat{\kappa}-\tilde{\kappa} ) + k(\kappa-\widehat{\kappa}) + \widehat{k}(\tilde{\kappa}-\kappa )]/2} = -\zeta_p^{\textrm{EXP}},
\]
where 
\begin{align*}
2 \cdot \textrm{EXP} &= k(\tilde{\kappa} - \kappa) + \widehat{k}(\kappa - \tilde{\kappa})= (k-\widehat{k})(\tilde{\kappa}-\kappa).
\end{align*}
Since the triple product must equal $-1$ and since $p$ is a prime, either $k=\widehat{k}$ or $\tilde{\kappa}=\kappa$.  However, if the latter were true, then $(k, \kappa), (\tilde{k}, \tilde{\kappa})$ would not be distinct points.  Thus the three points are a subset of $\beta_k$.

If $k \neq \tilde{k}$, we set $(a,b) = SI((k,\kappa),(\tilde{k},\tilde{\kappa}))$.  Plugging this into \eqref{eqn:TP}, we obtain $-1 = -\zeta_p^{\textrm{EXP}}$, where
\begin{align*}
2 \cdot \textrm{EXP} & = k(\widehat{\kappa}-a \tilde{k} -b ) + \tilde{k}(ak +b-\widehat{\kappa} ) + \widehat{k}a ( \tilde{k}-k) = (k - \tilde{k})(\widehat{\kappa}-a - b\widehat{k}).
\end{align*}
Since $k \neq \tilde{k}$, it must hold that $\widehat{\kappa} = a + b \widehat{k}$, and the three points lie in $\beta_{a,b}$.
\end{proof}

\begin{cor}\label{cor:prodAG}
Let $p=(p_0, \hdots, p_s)$ be a vector of distinct odd primes. Then the binder of $\cG(p)$,
\[
\cB\!\! = \!\!\Big\{\! \big\{ \left((k_0, \hdots, k_s),(\kappa_0, \hdots, \kappa_s)\right)\!:\! \forall \ell \in \{0, \hdots, s\} \, (k_\ell,\kappa_\ell) \in \beta_\ell \big\}\!:\! \forall \ell \in \{0, \hdots, s\} \, \beta_\ell \in AG(2,p_\ell)\!\Big\},
\] 
is not a BIBD.
\end{cor}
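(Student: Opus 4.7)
My plan is to exploit the Cartesian product structure of $\cB$ that Proposition~\ref{prop:prod} (combined with Theorem~\ref{thm:AG}) hands us, and show that pair coverage is non-constant. First I would assume $s \geq 1$ (otherwise $\cB = AG(2,p_0)$ is a BIBD, so the claim is vacuous for a single prime). By Proposition~\ref{prop:prod} and Theorem~\ref{thm:AG},
\[
\cB = \big\{ \beta_0 \times \beta_1 \times \cdots \times \beta_s : \beta_\ell \in AG(2,p_\ell) \text{ for each } \ell \big\},
\]
where we identify $\bigoplus_{\ell=0}^s \bZ_{p_\ell} \times \bigoplus_{\ell=0}^s \bZ_{p_\ell}$ with $\prod_{\ell=0}^s \bZ_{p_\ell}^2$. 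Since every line in $AG(2,p_\ell)$ has size $p_\ell$, every block in $\cB$ has the same size $\prod_\ell p_\ell$, so block-uniformity is not the issue.

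The plan is to compute pair coverage directly. Recall that $AG(2,p_\ell)$ is a $(p_\ell^2,p_\ell,1)$-BIBD whose replication number -- the number of lines through a given point -- equals $r_\ell = (p_\ell^2-1)/(p_\ell-1) = p_\ell+1$. Now fix two distinct points $x = (x_0,\dots,x_s)$ and $y = (y_0,\dots,y_s)$ in $\prod_\ell \bZ_{p_\ell}^2$ and let $S(x,y) = \{\ell : x_\ell = y_\ell\}$, which is a proper subset of $\{0,\dots,s\}$. The number of blocks $\beta_0 \times \cdots \times \beta_s \in \cB$ containing both $x$ and $y$ factors coordinate-wise: for $\ell \in S(x,y)$ we count lines of $AG(2,p_\ell)$ through the single point $x_\ell=y_\ell$, giving $p_\ell+1$ choices, while for $\ell \notin S(x,y)$ we count lines through two distinct points, giving exactly one choice. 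Hence
\[
\#\{B \in \cB : \{x,y\} \subseteq B\} = \prod_{\ell \in S(x,y)} (p_\ell+1).
\]

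To finish, I exhibit two pairs with different coverage. Because $s \geq 1$, I can pick $x,y$ with $x_\ell \neq y_\ell$ for all $\ell$, giving coverage~$1$; and I can pick $x',y'$ agreeing in exactly one coordinate~$\ell_0$, giving coverage $p_{\ell_0}+1 \geq 4$. Thus no single $\lambda$ witnesses balance, and $\cB$ is not a BIBD. There is no real obstacle here: the only subtlety is remembering that the notion of BIBD requires balance for \emph{every} pair of distinct points, and that in a product design the coincidence pattern $S(x,y)$ controls coverage; once that is in place the conclusion is immediate.
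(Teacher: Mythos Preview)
Your proof is correct and follows essentially the same approach as the paper: both invoke Proposition~\ref{prop:prod} and Theorem~\ref{thm:AG} to obtain the Cartesian product structure, then observe that pair coverage depends on whether the two points agree in some coordinate (yielding a factor of $p_\ell+1$ from the replication number of $AG(2,p_\ell)$) or differ (yielding a factor of $1$ from $\lambda=1$). Your version is slightly more explicit in writing out the full coverage formula $\prod_{\ell \in S(x,y)}(p_\ell+1)$ and in flagging the $s=0$ edge case, but the argument is the same.
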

\begin{proof}
The structure of the binder follows from Proposition~\ref{prop:prod} and Theorem~\ref{thm:AG}.

For $p$ prime any point lies in $p+1$ blocks of $AG(2,p)$ and any distinct pair of points lies in $1$ block. Thus for $p_i \neq p_j$, $(k_i,\kappa_i), (\tilde{k}_i,\tilde{\kappa}_i) \in \bZ_{p_i}^2$ with $(k_i,\kappa_i) \neq (\tilde{k}_i,\tilde{\kappa}_i)$ and  $(k_j,\kappa_j),(\tilde{k}_j,\tilde{\kappa}_j) \in \bZ_{p_j}^2$, the pair
\[
((k_i, k_j),(\kappa_i,\kappa_j)), ((\tilde{k}_i, \tilde{k}_j),(\tilde{\kappa}_i,\tilde{\kappa_j}))
\]
in $(\bZ_{p_i} \oplus \bZ_{p_j})^2$ lies in $1$ block of $AG(p_i) \times AG(p_j)$ if $(k_j,\kappa_j) \neq (\tilde{k}_j,\tilde{\kappa}_j)$ and $p_j+1$ blocks otherwise.  In general no such product $AG(2,p_\ell)$ will again yield a BIBD.
\end{proof}

We would like to show that for an odd prime $p$, the binder of $\cG(p,p)$ forms a balanced incomplete block design which seems to be new.  We will first characterize the balanced incomplete block design and prove that it is, in fact, a balanced incomplete block design.

\begin{thm}\label{thm:bibdsets}
Let $p$ be an odd prime. The collection of blocks
\[
\widetilde{AG}(2,p^2) =  \bigcup_{i=0}^4 \cB_i
\]
with 
\begin{align*}
\cB_0 &= \Big\{ \beta_k^{(0)} = \big\{ (k, \kappa) : \kappa \in \bZ_p^2 \big\} : k \in \bZ_p^2 \Big\},\\
\cB_1 &= \Big\{\beta_{A,b}^{(1)} = \big\{ (k, Ak+b): k \in \bZ_p^{2}\big\} : A \in \SYM_2(\bZ_p), b \in \bZ_p^{2}\Big\},\\
\cB_2 &= \Big\{\beta_{a,b,c_0}^{(2)} = \big\{ ((c_0,k_1),(\kappa_0,ak_1 + b)): k_1,\kappa_0 \in \bZ_p\big\}: a,b,c_0 \in \bZ_p\Big\},\\
\cB_3 &= \Big\{\beta_{a,b,c_1}^{(3)} = \big\{ ((k_0,c_1),(ak_0 + b,\kappa_1)): k_0,\kappa_1 \in \bZ_p\big\}: a,b,c_1 \in \bZ_p\Big\}, \textrm{ and}\\
\cB_4 &= \Big\{\beta_{\alpha, \beta, a, b}^{(4)} \!\!= \!\!\big\{ ((k_0, \alpha k_0 + \beta),(\kappa_0, (-1/\alpha)\kappa_0 + ak_0 +b )\!\!:\!\! k_0, \kappa_0 \in \bZ_p \big\}:\alpha, \beta, a, b \in \bZ_p, \alpha \neq 0\Big\}
\end{align*}
is a $(p^4,p^2,p+1)$-BIBD.
\end{thm}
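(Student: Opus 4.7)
The plan is to verify the three defining axioms of a $(p^4, p^2, p+1)$-BIBD directly from the construction: the point set $\bZ_p^2 \times \bZ_p^2$ has $v = p^4$ elements; each block is parametrised by two independent coordinates ranging over $\bZ_p$ and so has $p^2$ points; and every pair of distinct points lies in exactly $p+1$ blocks. The first two are routine inspection of the five defining families $\cB_0,\ldots,\cB_4$. The only substantive content is the pair-coverage property.

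For the pair-coverage, fix distinct points $P_i = (k^i,\kappa^i) \in (\bZ_p^2)^2$ for $i = 1,2$ and split into Case~A ($k^1 = k^2$, forcing $\kappa^1 \neq \kappa^2$) and Case~B ($k^1 \neq k^2$). The cleanest way to organise the bookkeeping is to dispose of $\cB_0$ and $\cB_1$ first. In Case~A, $\cB_0$ contributes exactly $1$ (the unique block $\beta_{k^1}^{(0)}$) while $\cB_1$ contributes $0$ since $Ak^1 + b$ is single-valued. In Case~B, $\cB_0$ contributes $0$, and the linear constraint $A(k^1 - k^2) = \kappa^1 - \kappa^2$ on a symmetric $A \in \SYM_2(\bZ_p)$ is two equations in the three unknown entries of $A$; a short check of the possible nonzero shapes of $k^1 - k^2$ shows this system is always consistent and has a one-parameter family of solutions, so $\cB_1$ contributes exactly $p$ blocks ($b$ being then determined).

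The remaining contributions from $\cB_2,\cB_3,\cB_4$ split by further subcases and are all elementary. In Case~A, $\cB_2$ (resp.\ $\cB_3$) contributes $p$ precisely when $\kappa_1^1 = \kappa_1^2$ (resp.\ $\kappa_0^1 = \kappa_0^2$), with the unconstrained parameter ranging freely and the other forced; subtracting the two instances of $\kappa_1 = -\kappa_0/\alpha + a k_0 + b$ shows that $\cB_4$ contributes $p$ in the complementary subcase where both $\kappa_0^1 \neq \kappa_0^2$ and $\kappa_1^1 \neq \kappa_1^2$ (the slope $\alpha = -(\kappa_0^1-\kappa_0^2)/(\kappa_1^1-\kappa_1^2)$ is forced non-zero and $a$ remains free). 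The three Case~A subcases each total $1 + p = p+1$. In Case~B, $\cB_2$ contributes $1$ iff $k_0^1 = k_0^2$, $\cB_3$ contributes $1$ iff $k_1^1 = k_1^2$, and $\cB_4$ contributes $1$ exactly when both components of $k^1 - k^2$ are nonzero (the slope $\alpha = (k_1^1-k_1^2)/(k_0^1-k_0^2)$ is then forced nonzero and $\beta,a,b$ are uniquely determined by a $2\times 2$ system with coefficient matrix $\bigl(\begin{smallmatrix}k_0^1 & 1\\ k_0^2 & 1\end{smallmatrix}\bigr)$); otherwise they contribute $0$. Each of the three Case~B subcases then totals $p + 1$ as well.

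The principal obstacle is purely the bookkeeping across these six subcases; none of the individual linear-algebra computations is hard, but all must be executed carefully over $\bZ_p$. As a consistency check that nothing has been overcounted, the five families are pairwise disjoint because the projection of a block onto its set of first coordinates in $\bZ_p^2$ (a point, all of $\bZ_p^2$, a vertical line, a horizontal line, or a line of slope $\alpha \neq 0$, respectively) already distinguishes the class $\cB_i$; and the total $|\cB_0| + |\cB_1| + |\cB_2| + |\cB_3| + |\cB_4| = p^2 + p^5 + p^3 + p^3 + (p-1)p^3 = p^2(p+1)(p^2+1)$ matches the required block count $vr/k$ with replication number $r = \lambda(v-1)/(k-1) = (p+1)(p^2+1)$.
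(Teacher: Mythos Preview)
Your proof is correct and follows essentially the same approach as the paper's: a direct case-by-case verification that every pair of distinct points lies in exactly $p+1$ blocks, split according to how the first coordinates $k^1,k^2$ agree or disagree. Your organisation is mildly cleaner than the paper's---you dispose of $\cB_0$ and $\cB_1$ uniformly before handling $\cB_2,\cB_3,\cB_4$, whereas the paper works through four cases (your Case~A and the three subcases of your Case~B) and constructs the blocks explicitly via slope--intercept fits---and you add a disjointness and block-count sanity check that the paper omits, but the underlying argument is the same.
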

\begin{proof}
We would like to show that if $(k,\kappa), (\tilde{k},\tilde{\kappa}) \in (\bZ_p^2)^2$ are distinct, then they lie in exactly $p+1$ blocks of $\widetilde{AG}(2,p^2)$. We will consider four such cases.

\noindent\textbf{Case 1}: If $(k_0,k_1) = (\tilde{k}_0, \tilde{k}_1)$, the points clearly lie in exactly one block in $\cB_0$, namely, $\beta_k^{(0)}$ and cannot lie in a block in $\cB_1$. Otherwise $\kappa = \tilde{\kappa}$ would hold, and the points would not be distinct.
\begin{enumerate}
\item If further $\kappa_0 \neq \tilde{\kappa}_0$ and $\kappa_1 \neq \tilde{\kappa}_1$, then the points cannot lie in blocks in $\cB_2$ or $\cB_3$. Let $(a^\ast, b^\ast) = SI((\kappa_0,\kappa_1),(\tilde{\kappa}_0,\tilde{\kappa}_1))$, where  $a^\ast$ is necessarily non-zero.  Set $\alpha = -1/a^\ast$ and $\beta= k_1 - \alpha k_0$.  For any $a \in \bZ_p$, define $b=b^\ast - a k_0$.  Then for each of the $p$ values of $a$, the pair of points lies in $\beta^{(4)}_{\alpha, \beta, a, b}$.
\item If $\kappa_0 = \tilde{\kappa}_0$ and $\kappa_1 \neq \tilde{\kappa}_1$, then the points cannot lie in blocks in $\cB_2$ or $\cB_4$.  For any $a \in \bZ_p$, set $b = \kappa_0 - a k_0$. Thus, the pair of points lies in $p$ different blocks $\beta_{a,b,k_1}^{(3)}$.
\item Similarly, if $\kappa_0 \neq \tilde{\kappa}_0$ and $\kappa_1 = \tilde{\kappa}_1$, then the pair of points lies in $p$ different blocks $\beta^{(2)}_{a,b,k_0}$ and no blocks in either $\cB_3$ or $\cB_4$.
\end{enumerate}
\textbf{Case 2}: If $k_0 = \tilde{k}_0$ and $k_1 \neq \tilde{k}_1$, then the points cannot lie in a block in $\cB_0$, $\cB_3$, or $\cB_4$.  Let $(a, b) = SI((k_1,\kappa_1),(\tilde{k}_1,\tilde{\kappa}_1))$.  Then the pair of points lies in the block $\beta_{a,b,k_0}^{(2)}$.   Now let $(a_1,b^\ast) = SI((k_1,\kappa_0),(\tilde{k}_1,\tilde{\kappa}_0))$. For $a_0 \in \bZ_p$, let $b_0 = b^\ast - a_0 k_0$.  Now set $(a_2,b_1) = SI((k_1,\kappa_1),(\tilde{k}_1,\tilde{\kappa}_1))$ and $A = \Sym(a_0,a_1,a_2)$.
Then for each of the $p$ choices of $a_0$, the pair of points lies in $\beta_{A,(b_0,b_1)}^{(1)}$.\\
\textbf{Case 3}: By symmetry, if $k_0 \neq \tilde{k}_0$ and $k_1 = \tilde{k}_1$, then the pair of points cannot lie in a block in $\cB_0$, $\cB_2$, or $\cB_4$, but the pair does lie in precisely one block of $\cB_3$ and in $p$ blocks of $\cB_1$.\\
\textbf{Case 4}: If $k_0 \neq \tilde{k}_0$ and $k_1 \neq \tilde{k}_1$, then the points cannot lie in a block in $\cB_0$, $\cB_2$, or $\cB_3$.  Let $(\alpha, \beta) = SI((k_0,k_1),(\tilde{k}_0,\tilde{k}_1))$ with $\alpha$ necessarily nonzero.  Then set $(a,b^\ast) = SI((k_0,\kappa_1),(\tilde{k}_0,\tilde{\kappa}_1))$ and $b=b^\ast + (1/\alpha)\kappa_0$.  Thus the points lie in $\beta_{\alpha,\beta,a,b}^{(4)}$.  Now we will show the the points lie in $p$ blocks in $\cB_1$.  Let $a_1 \in \bZ_p$ and set $(a_0,b_0) = SI((k_0,\kappa_0-a_1 k_1),(\tilde{k}_0,\tilde{\kappa}_0-a_1\tilde{k}_1))$ and then $(a_2,b_1) = SI((k_1,\kappa_1-a_1k_0),(\tilde{k}_1,\tilde{\kappa}_1-a_1\tilde{k}_0))$, implying that for $A = \Sym(a_0,a_1,a_2)$, 
the points lie in $\beta_{A,(b_0, b_1)}^{(1)}$.
\end{proof}
The structure of $\widetilde{AG}(2,p^2)$ is very similar to the affine geometry $AG(2,p^2)$ (cf., Definition~\ref{defn:affgeo}, Example~\ref{ex:affgeo2}), but it is a non-affine BIBD (proven below in Theorem~\ref{thm:bibddecomp}) and contains $p+1$ as many blocks as $AG(2,p^2)$.  In particular, $AG(2,p^2)$ may be represented as
\begin{align*}
\lefteqn{AG(2,p^2)}\\
&= \Big\{\big\{ ak + b\kappa + c = 0: k, \kappa \in GF(p^2) \big\}: (a,b) \in \{(0,1)\} \cup \{(a,0): a \in GF(p^2)\}, c \in GF(p^2) \Big\}\\
&\subset \Big\{\big\{ ak + b\kappa + c = 0: k, \kappa \in GF(p^2) \big\}: a,b, c \in GF(p^2)\Big\} ,
\end{align*}
while
\[
\widetilde{AG}(2,p^2) \subset \Big\{\big\{ Ak + B\kappa + c = 0: k, \kappa \in \bZ_p^2 \big\}: A, B \in \operatorname{Mat}_2(\bZ_p), c \in \bZ_p^2 \Big\}.
\]

For any odd prime $p$ one may obtain a $(p^4,p^2,p+1)$-BIBD by gluing together $p+1$ $(p^4,p^2,1)$-BIBDs like $AG(2,p^2)$ \cite{MaRo85,Jung86,Handbook}.  However, the $(p^4,p^2,p+1)$-BIBDs presented in Theorem~\ref{thm:bibdsets} are not built up from smaller balanced incomplete block designs in a sense made clear in the following theorem and, to the best of our knowledge, represent a new class of balanced incomplete block design construction.  In \cite{Bill82,Ebe04}, a selection of simple, irreducible, quasimultiple $(n,k,\lambda)$-BIBDs are presented, but each construction has a restriction on the parameters, like $k=\lambda$ or $\lambda=3$, that does not apply to $\widetilde{AG}(2,p^2)$.
In general, it is NP-Complete to determine whether a $(v,k,\lambda)$-BIBD with $\lambda > 1$ contains a $(v,k,\tilde{\lambda})$-BIBD with $\tilde{\lambda} < \lambda$ \cite{CCS85}; however, we are able to prove the irreducibilty of $\widetilde{AG}(2,p^2)$ using a simple pigeonhole principle argument.

\begin{thm} \label{thm:bibddecomp}
Let $p$ be an odd prime. Then $\widetilde{AG}(2,p^2)$ is a simple, non-affine balanced incomplete block design.  Also, $\widetilde{AG}(2,p^2)$ is not decomposable into a union of $p+1$ balanced incomplete block designs isomorphic to $AG(2,p^2)$.  Further, any pair of blocks $\beta, \beta' \in \widetilde{AG}(2,p^2)$ satisfy
\[
 | \beta \cap \beta'  | \in \{p^2, p, 1, 0\} .
\]
\end{thm}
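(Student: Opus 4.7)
The key observation is that every block of $\widetilde{AG}(2,p^2)$ is a two-dimensional $\bZ_p$-affine subspace of $\bZ_p^4$, as is immediate from the parameterizations of $\cB_0$ through $\cB_4$ in Theorem~\ref{thm:bibdsets}. Two distinct blocks therefore meet in an affine subspace of dimension at most $1$, yielding $|\beta\cap\beta'|\in\{0,1,p,p^2\}$ with $p^2$ occurring only when $\beta=\beta'$; this gives the intersection-size claim. Simplicity then follows because each family's parameterization is injective and the linear directions distinguish the five families, so no block is repeated. For non-affineness, an affine $(p^4,p^2,p+1)$-BIBD would force distinct-block intersection sizes into $\{1,0\}$ by Corollary~\ref{cor:affint} (since $k^2/v=1$); but the intersection $\beta_{(c_0,c_1)}^{(0)}\cap\beta_{a,b,c_0}^{(2)}=\{((c_0,c_1),(\kappa_0,ac_1+b)):\kappa_0\in\bZ_p\}$ has size $p$, ruling this out.

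For non-decomposability, suppose for contradiction that $\widetilde{AG}(2,p^2)=\bigsqcup_{i=0}^p\mathcal{A}_i$ with each $\mathcal{A}_i\cong AG(2,p^2)$. Each affine $\mathcal{A}_i$ has distinct blocks meeting in $0$ or $1$ point, and its parallel classes are sets of $p^2$ disjoint blocks partitioning $\bZ_p^4$, necessarily the translates of a single two-dimensional direction. Since $(p+1)(p^2+1)$ equals the total number of Lagrangian subspaces of $\bZ_p^4$ relative to the symplectic form $\omega((k,\kappa),(k',\kappa'))=k\cdot\kappa'-k'\cdot\kappa$---a direct check shows that each of the five families' directions is $\omega$-isotropic---every Lagrangian direction is used by exactly one copy. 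Hence the decomposition corresponds to a partition of the Lagrangian subspaces of $\bZ_p^4$ into $p+1$ symplectic spreads, i.e., a packing of the symplectic generalized quadrangle $W(3,p)$, which is classically known not to exist for odd $p$.

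The main obstacle is this last step. For a self-contained pigeonhole-style argument, I would first observe that the parallel class $\cB_0$ must lie in a single copy $\mathcal{A}_0$ and that computing direction intersections with $\lspan(e_3,e_4)$ rules out any $\cB_2$-, $\cB_3$-, or $\cB_4$-direction in $\mathcal{A}_0$; thus the remaining $p^2$ directions of $\mathcal{A}_0$ must come from $\cB_1$ and give rise to $p^2$ symmetric matrices in $\SYM_2(\bZ_p)$ with pairwise nonsingular differences. A parallel analysis shows each of the other $p$ copies must carry exactly one direction from $\cB_2$, one from $\cB_3$, one $\cB_4$-direction for each nonzero $\alpha$, and $p^2-p$ further $\cB_1$-directions satisfying explicit linear non-equalities inherited from the triviality of pairwise direction intersections. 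A careful count of the $\cB_1$-directions still available to the remaining copies then produces the required cardinality contradiction.
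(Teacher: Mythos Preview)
Your treatment of simplicity, the intersection-size set $\{0,1,p,p^2\}$, and non-affineness is essentially the paper's argument: blocks are two-dimensional $\bZ_p$-affine subspaces, so pairwise intersections are affine subspaces, and exhibiting one pair meeting in $p$ points rules out affineness via Corollary~\ref{cor:affint}. (The paper uses two $\cB_3$ blocks rather than a $\cB_0$--$\cB_2$ pair, but the idea is identical.)

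The non-decomposability argument, however, has genuine gaps. First, the assertion that each parallel class of a hypothetical factor $\mathcal{A}_i$ is ``necessarily the translates of a single two-dimensional direction'' is not justified: a priori a parallel class is only a partition of $\bZ_p^4$ into $p^2$ pairwise disjoint two-dimensional affine subspaces, and such partitions need not be coset decompositions of a single linear subspace. You would need to argue that two blocks of $\mathcal{A}_i$ whose directions meet in a line are forced to intersect in $p$ points (hence cannot coexist), which constrains the direction set of each $\mathcal{A}_i$ to be a spread---but this step is missing. Second, even granting that reduction, the appeal to ``a packing of the symplectic generalized quadrangle $W(3,p)$, which is classically known not to exist for odd $p$'' is asserted without reference or proof; this is not a standard fact one can cite offhand. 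Third, your fallback ``self-contained pigeonhole-style argument'' is only a sketch that ends with ``a careful count \ldots\ then produces the required cardinality contradiction'' without carrying out that count.

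By contrast, the paper bypasses all of this. It simply observes that for any $a\neq a'$ the blocks $\beta_{a,b,0}^{(3)}$ and $\beta_{a',b',0}^{(3)}$ intersect in exactly $p$ points, and then applies the pigeonhole principle directly to these $p^2$ blocks distributed among $p+1$ affine factors. No symplectic structure, no spread classification, no analysis of how the five families interact across copies---just one explicit family of pairwise ``bad'' blocks. Your symplectic reformulation is conceptually interesting and, if completed, would say something stronger about the Lagrangian structure, but as written it substitutes a hard unproven claim for the paper's elementary counting.
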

\begin{proof}
It follows from the proof of Theorem~\ref{thm:bibdsets} that $\widetilde{AG}(2,p^2)$ contains no repeated blocks and is thus simple.  

Let $\beta, \beta' \in \widetilde{AG}(2,p^2)$ be distinct.  Then there exist $A, B, A', B' \in \operatorname{Mat}_2(\bZ_p)$ and $c, c' \in \bZ_p^2$ such that
\begin{align*}
\beta &= \big\{ Ak + B\kappa + c = 0: k, \kappa \in \bZ_p^2 \big\} \quad \textrm{and} \quad \beta' = \big\{ A'k + B'\kappa + c' = 0: k, \kappa \in \bZ_p^2 \big\}.
\end{align*}
Thus $\beta \cap \beta'$ is a solution to a system of linear equations over $(\bZ_p^2)^2$ and must be an affine $\bZ_p$-subspace. Since $\beta, \beta'$ are distinct and of size $p^2$, this means that $\abs{\beta\cap\beta'}\in \{p,1,0\}$, as desired.

By Corollary~\ref{cor:affint}, an affine $(p^4,p^2,\lambda)$-BIBD has blocks that intersect in sets of size $0$, $1$, or $p^2$; however, for any $a, b, a', b' \in \bZ_p$ with $a \neq a'$
\[
\beta_{a,b,0}^{(3)} \cap \beta_{a',b',0}^{(3)} = \left\{ \left(\left(\frac{b'-b}{a-a'},0\right),\left(\frac{ab'-a'b}{a-a'},\kappa_1\right)\right) : \kappa_1 \in \bZ_p\right\},
\]
which has size $p$.  Thus $\widetilde{AG}(2,p^2)$ is not affine.  Further, since there are $p^3(p-1)/2$ possible $\beta_{a,b,0}^{(3)}$, $\beta_{a',b',0}^{(3)}$ with $a \neq a'$, it follows from the pigeonhole principle that $\widetilde{AG}(2,p^2)$ is also not decomposable into a disjoint union of $p+1$ affine $(p^4,p^2,1)$-BIBDs like $AG(2,p^2)$.
%
%
\end{proof}

\begin{thm}\label{thm:AG2}
Let $p$ be an odd prime. Then the binder $\cB$ of $\cG(p,p)$ is precisely $\widetilde{AG}(2,p^2)$.
\end{thm}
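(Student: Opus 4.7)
The plan is to prove both inclusions $\widetilde{AG}(2,p^2) \subseteq \cB$ and $\cB \subseteq \widetilde{AG}(2,p^2)$.

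For $\widetilde{AG}(2,p^2) \subseteq \cB$ I verify family by family that each block is a simplex. The family $\cB_0$ is already known to consist of simplices by Theorem~\ref{thm:Steiner}, and $\cB_1$ by Proposition~\ref{prop:Ab}. For $\cB_2, \cB_3, \cB_4$ I apply Lemma~\ref{lem:TP}: the triple product of three distinct points equals $-1$ precisely when $E_0 + E_1 \equiv 0 \pmod{p}$, where
\[
E_\ell = k_\ell(\widehat{\kappa}_\ell - \tilde{\kappa}_\ell) + \tilde{k}_\ell(\kappa_\ell - \widehat{\kappa}_\ell) + \widehat{k}_\ell(\tilde{\kappa}_\ell - \kappa_\ell).
\]
For $\cB_2$, the sum $E_0$ vanishes because $k_0 \equiv c_0$ is constant, and $E_1 = 0$ by the same calculation as in the proof of Proposition~\ref{prop:ab} applied to the affine relation $\kappa_1 = ak_1+b$; the case $\cB_3$ is symmetric. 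For $\cB_4$, substituting $k_1 = \alpha k_0 + \beta$ and $\kappa_1 = -(1/\alpha)\kappa_0 + ak_0 + b$ into $E_1$ produces four term groups: one reproducing $-E_0$, together with $\beta/\alpha$-, $a\alpha$-, and $a\beta$-coefficient groups, each of which vanishes by telescoping or by the skew-symmetric identity used in the proof of Proposition~\ref{prop:Ab}; hence $E_0 + E_1 = 0$.

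For the harder inclusion I follow the approach of Theorem~\ref{thm:AG} and first establish the stronger statement that any three distinct points $P_i = (k^{(i)}, \kappa^{(i)})$ with triple product $-1$ lie in some block of $\widetilde{AG}(2,p^2)$. I case-split on the configuration of $k^{(0)}, k^{(1)}, k^{(2)} \in \bZ_p^2$: (a) all three are equal, giving a $\cB_0$ block; (b) exactly two are equal, where $E_0 + E_1 \equiv 0 \pmod{p}$ collapses to a linear equation forcing a single $\kappa$-coordinate alignment and placing the triple in $\cB_2$ or $\cB_3$; (c) pairwise distinct and not collinear in $\bZ_p^2$, in which case the TP constraint is exactly the consistency condition that permits the six equations $\kappa^{(i)} = Ak^{(i)} + b$ to be solved by a unique $A \in \SYM_2(\bZ_p)$, giving a $\cB_1$ block; (d) pairwise distinct and collinear along $k_1 = \alpha k_0 + \beta$, where the substitution $\mu_i = \kappa_0^{(i)} + \alpha \kappa_1^{(i)}$ reduces $E_0 + E_1 \equiv 0$ to the collinearity of $(k_0^{(i)}, \mu_i)$ in $\bZ_p^2$, from which $(a,b)$ is recovered to place the triple in $\beta_{\alpha,\beta,a,b}^{(4)} \in \cB_4$.

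Given three points $P_0, P_1, P_2 \in \beta$ lying in a block $B \in \widetilde{AG}(2,p^2)$, I extend to any fourth $P_3 \in \beta$: each of $(P_0,P_1,P_3)$, $(P_0,P_2,P_3)$, $(P_1,P_2,P_3)$ has triple product $-1$ and so lies in some block of $\widetilde{AG}(2,p^2)$. By Theorem~\ref{thm:bibddecomp}, distinct blocks intersect in sets of size $0, 1, p,$ or $p^2$; combining this with a family-by-family examination of the explicit parametrizations of $\cB_0, \ldots, \cB_4$ pins each of these blocks to $B$, forcing $P_3 \in B$ and hence $\beta \subseteq B$. Since $|\beta| = |B| = p^2$, equality gives $\beta = B$. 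The main obstacle will be the case analysis in the three-point statement, especially subcase (d), where the parameters $(\alpha, \beta, a, b)$ of the $\cB_4$ block must be recovered from the TP constraint and checked for consistency; a secondary subtlety is the extension step, which is most cleanly handled family by family using the explicit block parametrizations rather than invoking Theorem~\ref{thm:bibddecomp} abstractly.
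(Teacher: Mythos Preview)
Your overall architecture matches the paper's: both inclusions, with the forward direction handled family by family via Lemma~\ref{lem:TP}, and the reverse direction by case analysis on the triple-product constraint $E_0+E_1\equiv 0$. The paper, however, organizes the reverse case-split coordinatewise on the \emph{first two} points (Case~1: $k=\tilde k$; Case~2: $k_0=\tilde k_0$, $k_1\ne\tilde k_1$; Case~3 symmetric; Case~4: $k_0\ne\tilde k_0$, $k_1\ne\tilde k_1$), whereas you split on the global configuration of all three $k^{(i)}$.

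Your case (b) is wrong as stated. If $k^{(0)}=k^{(1)}\ne k^{(2)}$ the constraint becomes
\[
(k^{(0)}_0-k^{(2)}_0)(\kappa^{(0)}_0-\kappa^{(1)}_0)+(k^{(0)}_1-k^{(2)}_1)(\kappa^{(0)}_1-\kappa^{(1)}_1)=0,
\]
and when \emph{both} components of $k^{(0)}-k^{(2)}$ are nonzero this does \emph{not} force either $\kappa$-coordinate to align; such triples can only lie in a $\cB_4$ block, not $\cB_2$ or $\cB_3$. (For instance in $\bZ_5$: $k^{(0)}=k^{(1)}=(0,0)$, $k^{(2)}=(1,1)$, $\kappa^{(0)}=(0,0)$, $\kappa^{(1)}=(1,4)$ lands in $\beta^{(4)}_{1,0,a,0}$.) The paper's coordinatewise split avoids this trap because its Case~1 ($k=\tilde k$) explicitly allows $\cB_4$ as a destination.

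You are right to flag the extension step from ``every triple in $\beta$ lies in some block'' to ``$\beta$ is a block'': the paper simply asserts $\cB\subseteq\widetilde{AG}(2,p^2)$ at the end of its triple-level analysis, which is justified in the setting of Theorem~\ref{thm:AG} because $AG(2,p)$ has $\lambda=1$ (two points determine a unique line, so fixing $P_0,P_1\in\beta$ forces every other point into that line), but is not automatic here with $\lambda=p+1$. Your proposed closure via Theorem~\ref{thm:bibddecomp} and a fourth-point argument is on the right track but too vague as written; you would need to argue concretely why the three blocks through $\{P_0,P_1,P_3\}$, $\{P_0,P_2,P_3\}$, $\{P_1,P_2,P_3\}$ must all coincide with $B$, and the intersection sizes $\{0,1,p\}$ alone do not obviously force this.
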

\begin{proof}
Let $\zeta_p$ be the primitive $p$th root of unity used in the construction of $\cG(p,p)$. 

We first show that $\widetilde{AG}(2,p^2) \subseteq \cB$.  By construction, each block in $\cB_0$ lies in $\cB$. It follows from Proposition~\ref{prop:Ab} that $\cB_1 \subseteq \cB$.  Let $\beta_{a,b,c_0}^{(2)} \in \cB_2$.  Then for distinct $(k, \kappa), (\tilde{k}, \tilde{\kappa}), (\widehat{k},  \widehat{\kappa}) \in \beta_{a,b,c_0}^{(2)}$,
\[
TP((k, \kappa), (\tilde{k}, \tilde{\kappa}), (\widehat{k},  \widehat{\kappa}))= -\zeta_p^{\textrm{EXP}},
\]
where
\begin{align*}
\lefteqn{\textrm{EXP}}
\\ &= {\frac{1}{2}  \left( k_0 (\widehat{\kappa}_0-\tilde{\kappa}_0 ) + \tilde{k}_0(\kappa_0-\widehat{\kappa}_0 ) + \widehat{k}_0( \tilde{\kappa}_0-\kappa_0) + k_1 ( \widehat{\kappa}_1-\tilde{\kappa}_1) + \tilde{k}_1( \kappa_1-\widehat{\kappa}_1) + \widehat{k}_1( \tilde{\kappa}_1-\kappa_1) \right)}\\
&= \frac{1}{2} \left( c_0 ( \widehat{\kappa}_0-\tilde{\kappa}_0 ) + c_0( \kappa_0-\widehat{\kappa}_0) + c_0( \tilde{\kappa}_0-\kappa_0) + k_1 a(\widehat{k}_1-\tilde{k}_1 ) + \tilde{k}_1 a( k_1-\widehat{k}_1) + \widehat{k}_1a(\tilde{k}_1-k_1) \right) \\
&= 0.
\end{align*}
Thus, $\beta_{a,b,c_0}^{(2)} \in \cB$.  Similarly each $\beta_{a,b,c_1}^{(3)} \in \cB_3$ lies $\cB$.  Now let $\beta_{\alpha, \beta, a,b}^{(4)} \in \cB_4$.  Then for distinct $(k, \kappa), (\tilde{k}, \tilde{\kappa}), (\widehat{k},  \widehat{\kappa}) \in \beta_{\alpha, \beta, a,b}^{(4)}$,
\[
TP((k, \kappa), (\tilde{k}, \tilde{\kappa}), (\widehat{k},  \widehat{\kappa}))= -\zeta_p^{\textrm{EXP}},
\]
where
\begin{align*}
\lefteqn{\textrm{EXP}}\\
 &= {\frac{1}{2}  \big( k_0 ( \widehat{\kappa}_0-\tilde{\kappa}_0) + \tilde{k}_0(\kappa_0-\widehat{\kappa}_0 ) + \widehat{k}_0( \tilde{\kappa}_0-\kappa_0 ) + (\alpha k_0 + \beta) ((-1/\alpha) (\widehat{\kappa}_0-\tilde{\kappa}_0 )+ a(\widehat{k}_0-\tilde{k}_0 )) } \\  
&\,\,\,+ (\alpha \tilde{k}_0 + \beta)((-1/\alpha)( \kappa_0-\widehat{\kappa}_0)+a ( k_0-\widehat{k}_0)) + (\alpha\widehat{k}_0 + \beta)((-1/\alpha)(\tilde{\kappa}_0-\kappa_0)+ a(\tilde{k}_0-k_0)) \big)\\
&=\frac{1}{2}\big( k_0 ( \widehat{\kappa}_0-\tilde{\kappa}_0) + \tilde{k}_0( \kappa_0-\widehat{\kappa}_0) + \widehat{k}_0( \tilde{\kappa}_0-\kappa_0) -(k_0 ( \widehat{\kappa}_0-\tilde{\kappa}_0 ) + \tilde{k}_0(\kappa_0-\widehat{\kappa}_0 ) + \widehat{k}_0( \tilde{\kappa}_0-\kappa_0)) \big)\\
&= 0.
\end{align*}
Thus $\widetilde{AG}(2,p^2) \subseteq \cB$.
 
For the other inclusion, let $(k, \kappa), (\tilde{k}, \tilde{\kappa}), (\widehat{k},  \widehat{\kappa}) \in \beta \in \cB$ be distinct points in a block $\beta$ in the simplex binder.  We will show that they lie in a block in $\widetilde{AG}(2,p^2)$. Then
\beq\label{eqn:TPp2}
0=k_0 (\widehat{\kappa}_0-\tilde{\kappa}_0 ) + \tilde{k}_0( \kappa_0-\widehat{\kappa}_0) + \widehat{k}_0(\tilde{\kappa}_0-\kappa_0 ) + k_1 ( \widehat{\kappa}_1-\tilde{\kappa}_1) + \tilde{k}_1(\kappa_1-\widehat{\kappa}_1 ) + \widehat{k}_1( \tilde{\kappa}_1-\kappa_1).
\eeq 
We consider a few cases. \\
\textbf{Case 1}: If $(k_0,k_1) = (\tilde{k}_0, \tilde{k}_1)$, then the triple does not lie in a block in $\cB_1$. Also
\beq\label{eqn:TPp3}
\eqref{eqn:TPp2} = 0 = (k_0 - \widehat{k}_0)( \kappa_0-\tilde{\kappa}_0 ) + (k_1 - \widehat{k}_1)(\kappa_1-\tilde{\kappa}_1).
\eeq
We assume that the points to not lie in a simplex in $\cB_0$, $\cB_2$, or $\cB_3$.  Then $k_0 - \widehat{k}_0, \kappa_0-\tilde{\kappa}_0 , k_1 - \widehat{k}_1,  \kappa_1-\tilde{\kappa}_1 \neq 0$.  Set $(\alpha, \beta) \in SI ((k_0,k_1),(\widehat{k}_0,\widehat{k}_1))$, where $\alpha$ must be nonzero. Plugging this into \eqref{eqn:TPp3}, we obtain
\[
\kappa_1 + (1/\alpha) \kappa_0 = \tilde{\kappa}_1 + (1/\alpha) \tilde{\kappa}_0.
\]
If we set $(a,b) = SI((k_0,\kappa_1 + (1/\alpha) \kappa_0),(\widehat{k}_0,\widehat{\kappa}_1 + (1/\alpha) \widehat{\kappa}_0))$, then the triple lies in $\beta_{\alpha, \beta, a, b}^{(4)}$.\\
\textbf{Case 2}: If $k_0=\tilde{k}_0 ,k_1 \neq \tilde{k}_1$, then the triple does not lie in a block in $\cB_0$, $\cB_3$, or $\cB_4$.  Let $(a_1,b^\ast)= SI((k_1,\kappa_0),(\tilde{k}_1,\tilde{\kappa}_0))$.  Then let $(a_2,b_1) = SI((k_1,\kappa_1 -a_1k_0),(\tilde{k}_1,\tilde{\kappa}_1 -a_1k_0))$.  We plug this into \eqref{eqn:TPp2}:
\begin{align*}
\eqref{eqn:TPp2}&= 0 \\
&= (k_0 - \widehat{k}_0)(\kappa_0-\tilde{\kappa}_0 ) + k_1 (\widehat{\kappa}_1-a_1 k_0 - a_2 \tilde{k}_1 - b_1 ) \\
& \quad+ \tilde{k}_1(a_1 k_0 + a_2 k_1 +b_1-\widehat{\kappa}_1 ) + \widehat{k}_1 a_2( \tilde{k}_1-k_1)\\
&= (\widehat{\kappa}_1-a_1 \widehat{k}_0 - a_2\hat{k}_1 -b_1 )(k_1 - \tilde{k}_1).
\end{align*}
Thus $\widehat{\kappa}_1 = a_1 \widehat{k}_0 + a_2 \widehat{k}_1 + b_1$. If $\widehat{k}_0 = k_0$, then $\widehat{\kappa}_1 - a_1 \widehat{k}_0 =a_2 \widehat{k}_1 + b_1$, and the triple lies in $\beta_{a_2,b_1+a_1k_0,k_0}^{(2)}$.  Otherwise, if $\widehat{k}_0 \neq k_0$, then choose $(a_0,b_0)=SI((k_0,\kappa_0-a_1k_1),(\widehat{k}_0,\widehat{\kappa}_0-a_1\widehat{k}_1))$. Thus, 
\[
\tilde{\kappa}_0 =a_1\tilde{k}_1 + b^\ast = a_1 \tilde{k}_1 +\kappa_0 -a_1k_1 = a_1 \tilde{k}_1 + a_0\tilde{k}_0 + b_0,
\]
and the triple lies in $\beta_{A,b}^{(1)}$ with $A = \Sym(a_0,a_1,a_2)$.\\
\textbf{Case 3}: By symmetry, when $k_0 \neq \tilde{k}_0 ,k_1 = \tilde{k}_1$, all possible triples lie in a block in $\widetilde{AG}(2,p^2)$.\\
\textbf{Case 4}: We now assume that $k_0 \neq \tilde{k}_0 ,k_1 \neq \tilde{k}_1$.  In this case, the triple cannot lie in a block in $\cB_0$, $\cB_2$, or $\cB_3$.  Let $(\alpha,\beta)= SI((k_0,k_1),(\tilde{k}_0,\tilde{k}_1))$, with $\alpha$ necessarily nonzero.  Further set $(e,f)=SI((k_0,\kappa_0),(\tilde{k}_0,\tilde{\kappa}_0))$ and $(g,h)=SI((k_0,\kappa_1),(\tilde{k}_0,\tilde{\kappa}_1))$.  Plugging these into 
\begin{align*}
\eqref{eqn:TPp2} &= 0\\
& =k_0 (\widehat{\kappa}_0 - e\tilde{k}_0 -f) + \tilde{k}_0 (ek_0 +f -\widehat{\kappa}_0) + \widehat{k}_0 e(\tilde{k}_0 - k_0) \\
&\quad +(\alpha k_0 + \beta)(\widehat{\kappa}_1 -g\tilde{k}_0 -h) + (\alpha \tilde{k}_0 + \beta)(gk_0 + h -\widehat{\kappa}_1) + \widehat{k}_1 g (\tilde{k}_0-k_0)\\
&= (\tilde{k}_0 - k_0)\left(e\widehat{k}_0 + f - \widehat{\kappa}_0 + g\widehat{k}_1 +\alpha h - \beta g - \alpha \widehat{\kappa}_1\right) \nonumber\\
& = (\tilde{k}_0 - k_0)\bigg(e\widehat{k}_0 + f - \widehat{\kappa}_0 + g\widehat{k}_1 +\alpha h - \beta g - \alpha \widehat{\kappa}_1 \\
&\quad+ a_1(\widehat{k}_1 - \alpha \widehat{k}_0 - \beta)-a_1 (\widehat{k}_1 - \alpha \widehat{k}_0 - \beta)\bigg)\nonumber\\
&= (\tilde{k}_0 - k_0)\bigg(\left( (e-a_1 \alpha)\widehat{k}_0 + a_1 \widehat{k}_1 + (f-a_1 \beta) - \widehat{\kappa}_0 \right) \\
&\quad + \left( \alpha a_1 \widehat{k}_0 + (g-a_1)\widehat{k}_1 + \alpha h - \beta (g-a_1) - \alpha \widehat{\kappa}_1 \right)\bigg).
\end{align*}
Let $a_1$ be the unique value in $\bZ_p$ such that
\[
0 =  (e-a_1 \alpha)\widehat{k}_0 + a_1 \widehat{k}_1 + (f-a_1 \beta) - \widehat{\kappa}_0.
\]
Thus,
\[
0 = a_1 \widehat{k}_0 +\frac{g-a_1}{\alpha}\widehat{k}_1 +  h - \frac{\beta}{\alpha} (g-a_1) -  \widehat{\kappa}_1.
\]
We set $a_0 = e-a_1\alpha$, $a_2=\frac{g-a_1}{\alpha}$, $b_0 = f - a_1\beta$, $b_1 = h -\frac{\beta}{\alpha} (g-a_1)$, which yields \ejk{$\widehat{\kappa} = \Sym(a_0,a_1,a_2) \widehat{k} + (\begin{array}{cc} b_0 & b_1 \end{array})^\top$},
with the same relationship for $(k,\kappa)$ and $(\tilde{k},\tilde{\kappa})$. Thus the triple lies in $\beta_{A,b}^{(1)}$. Thus $\cB \subseteq \widetilde{AG}(2,p^2)$.
\end{proof}

\ejk{There seems to be evidence that a stronger result than Theorem~\ref{thm:bibddecomp} holds, namely that for $p$ an odd prime, $\widetilde{AG}(2,p^2)$ contains no copies of $AG(2,p^2)$.  We note that by applying \cite[Theorem 5.2]{FJKM17} to \cite[Theorem 5.1]{FJMPW17} over a finite field of order $p^2$, we may obtain an equiangular tight frame of $p^4$ vectors spanning a space of dimension $p^2(p^2-1)/2$ which has a binder which contains $AG(2,p^2)$. However, one may verify via triple products that these equiangular tight frames are not switching equivalent.  Further, it is easy to find triples of points in simplices corresponding to blocks of  $AG(2,p^2)$ in that binder (e.g., from the line with slope $\alpha$  for $\alpha^2 + 1 = 0$ and intercept $0$) which do not lie in blocks in $\widetilde{AG}(2,p^2)$ under the mapping $\bF_{p^2} \rar \bZ_p \oplus \bZ_p$ that sends $k_0 + \alpha k_1 \rar (k_0, k_1)$ for $\alpha^2 +1 =0$.
}

\begin{cor}
Let $m$ be an odd integer $\geq 3$.   Then the binder $\cB$ of $\cG(m,m)$ contains $\cup_{i=0}^4 \cB_i$ (as defined in Theorem~\ref{thm:bibdsets}).  Thus $\cB(\cG(m,m))$ contains at least $(m+1)m^2(m^2+1)$ simplices.
\end{cor}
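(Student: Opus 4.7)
The plan is to extend Theorem~\ref{thm:AG2} from the prime case to an arbitrary odd $m \geq 3$ by defining the five families $\cB_0,\cB_1,\cB_2,\cB_3,\cB_4$ by the formulas of Theorem~\ref{thm:bibdsets}, replacing $\bZ_p$ by $\bZ_m$ throughout and interpreting the symbol $-1/\alpha$ in $\cB_4$ as requiring $\alpha$ to be a unit of $\bZ_m$. By Lemma~\ref{lem:TP} together with Theorem~\ref{thm:TP}, a subset of size $|m|=m^2$ is a simplex of $\cG(m,m)$ if and only if every triple product on it equals $-1$; equivalently, the exponent EXP of $\zeta_m$ in the formula of Lemma~\ref{lem:TP} must vanish modulo $m$ on every triple.

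For $\cB_0,\cB_1,\cB_2,\cB_3$ I would simply reproduce the EXP computations carried out in the proof of Theorem~\ref{thm:AG2}. These manipulations rely solely on distributivity, commutativity, the cyclic telescoping identity $a(b-c)+b(c-a)+c(a-b)=0$, and, in the case of $\cB_1$, the symmetry of $A$; all are ring identities that remain valid over $\bZ_m$ and do not need $\bZ_m$ to be a field. For $\cB_4$ the analogous calculation collapses EXP to zero via $1+\alpha\cdot(-1/\alpha)=0$, which is available precisely when $\alpha$ is a unit. Hence $\bigcup_{i=0}^{4}\cB_i \subseteq \cB(\cG(m,m))$.

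The counting step is purely combinatorial: $|\cB_0|=m^2$, $|\cB_1|=|\SYM_2(\bZ_m)|\cdot m^2 = m^5$, $|\cB_2|=|\cB_3|=m^3$, and $|\cB_4|=(m-1)m^3$ when every nonzero element of $\bZ_m$ is a unit. Pairwise disjointness of these families follows by inspecting the dependence between the $k$- and $\kappa$-coordinates in each block: $\cB_0$ has both components of $k$ constant, $\cB_2$ and $\cB_3$ have one component constant, while $\cB_1$ and $\cB_4$ have both varying but with different functional relationships between $\kappa$ and $k$. Summing the five sizes yields $m^2+m^5+2m^3+(m-1)m^3 = (m+1)m^2(m^2+1)$, which exactly matches the claimed lower bound.

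The hard part is the composite case. If $m$ is not prime, only $\phi(m)<m-1$ nonzero elements of $\bZ_m$ are units, so the construction of $\cB_4$ directly yields only $\phi(m)m^3$ simplices rather than $(m-1)m^3$. To recover the full count $(m+1)m^2(m^2+1)$ one would have to adjoin further families of simplices parameterized by non-unit $\alpha$. The natural approach is to invoke the Chinese Remainder decomposition $\bZ_m\cong \bigoplus_i \bZ_{p_i^{e_i}}$ and build simplices whose slope is invertible modulo some prime-power factors and trivial modulo the rest, then lift these back to $\cG(m,m)$. Verifying that such hybrid lifts genuinely define simplices is the main technical obstacle, as Proposition~\ref{prop:prod} does not immediately apply when the moduli coincide.
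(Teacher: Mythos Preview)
Your approach is exactly the paper's: its entire proof reads ``Simply replace $p$ with $m$ in the first half of the proof of Theorem~\ref{thm:AG2}.'' So your plan to rerun the EXP computations for $\cB_0,\dots,\cB_4$ over $\bZ_m$, noting that they are pure ring identities (plus, for $\cB_4$, the relation $\alpha\cdot(-1/\alpha)=-1$), is precisely what is intended.

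Where you diverge is in the last paragraph. You are right that for composite $m$ the symbol $-1/\alpha$ forces $\alpha\in\bZ_m^\times$, so the literal family $\cB_4$ has only $\phi(m)\,m^3$ blocks, and hence $|\cB_0\cup\cdots\cup\cB_4|$ falls short of $(m+1)m^2(m^2+1)$. The paper does not address this; its one-line proof establishes the containment $\bigcup_i\cB_i\subseteq\cB$ but never revisits the cardinality, and the ``Thus'' in the statement is justified only when $m$ is prime. Your observation is a genuine subtlety that the paper glosses over, not a defect in your argument. The CRT lifting scheme you sketch is therefore not something you need to carry out to match the paper's proof---and, as you note yourself, Proposition~\ref{prop:prod} does not apply when the factors are equal, so that route would require new work anyway. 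For the purposes of reproducing the paper's argument, stop after verifying the five EXP computations and recording the count as stated; if you want to be precise, flag that the displayed lower bound is only established verbatim for prime $m$.
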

\begin{proof}
Simply replace $p$ with $m$ in the first half of the proof of Theorem~\ref{thm:AG2}.
\end{proof}
We note that when $m$ is not prime, $\cup_{i=0}^4 \cB_i$ is not a BIBD and $\cB \neq \cup_{i=0}^4 \cB_i$. We now generalize Theorems~\ref{thm:AG} and~\ref{thm:AG2} as the following conjecture.
\begin{conj}
Let $p$ be an odd prime and $(p,\hdots,p)$ length $s+1$. Then the binder of $\cG(p,\hdots,p)$ is a $(p^{2(s+1)}, p^{s+1}, (p^{s+1}-1)/(p-1))$-BIBD.
\end{conj}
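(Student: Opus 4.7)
The approach centers on the Heisenberg/symplectic symmetry inherent in $\cG(p,\dots,p)$, which I will use to reduce the structure of the binder to the elementary geometry of Lagrangian subspaces of $(\bZ_p^{2(s+1)},\omega)$. Set $n=s+1$, $V=\bZ_p^{2n}$, and identify each index $(k,\kappa)\in\bigoplus_{\ell=0}^{s}\bZ_p\times\bigoplus_{\ell=0}^{s}\bZ_p$ with a vector $v\in V$. Equip $V$ with the standard symplectic form $\omega\bigl((k,\kappa),(\tilde k,\tilde \kappa)\bigr)=\sum_{\ell=0}^{s}(k_\ell\tilde\kappa_\ell-\tilde k_\ell\kappa_\ell)$. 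Collecting terms in the exponent of Lemma~\ref{lem:TP} (of the flavor already carried out in Propositions~\ref{prop:ab} and~\ref{prop:Ab}) gives the identity
\[
\TP(v,\tilde v,\widehat v)=-\zeta_p^{-\omega(\tilde v-v,\widehat v-v)/2}.
\]
Since $p$ is odd, $\TP=-1$ if and only if $\omega(\tilde v-v,\widehat v-v)\equiv 0\pmod p$. Invoking Theorem~\ref{thm:TP}, a subset $\beta\subseteq V$ of size $p^n$ is a simplex exactly when all pairwise differences within $\beta$ are $\omega$-isotropic.

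The next step promotes pairwise isotropy to a subspace statement. Fix $v_0\in\beta$ and set $S=\beta-v_0$; then $0\in S$ and $\omega(u,u')=0$ for all $u,u'\in S$. By bilinearity, $\omega$ vanishes on $W:=\lspan_{\bZ_p}(S)$, so $W$ is an isotropic subspace of $(V,\omega)$ and thus $\dim W\le n$, yielding $|W|\le p^n$. Combined with $S\subseteq W$ and $|S|=p^n$, this forces $W=S$ to be a Lagrangian subspace. Conversely, every coset of a Lagrangian is pairwise $\omega$-isotropic, hence a simplex. Therefore
\[
\cB(\cG(p,\dots,p))=\{L+b:L\subset V\text{ Lagrangian},\ b\in V\}
\]
coincides with the affine Lagrangian design of $(V,\omega)$. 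This identification is in fact forced by Clifford-group covariance: translations by $V$ together with $Sp(2n,\bZ_p)$ act on $\cB$, and the affine Lagrangians form a single orbit among $p^n$-subsets of $V$.

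To read off BIBD parameters, I would use the standard counts $\#\{L:L\text{ Lagrangian in }V\}=\prod_{i=1}^{n}(p^i+1)$, each Lagrangian giving rise to $p^n$ cosets; and, for any nonzero $u\in V$, the Lagrangians containing $u$ are in bijection with the Lagrangians of the $2(n-1)$-dimensional symplectic quotient $u^{\perp}/\langle u\rangle$, of which there are $\prod_{i=1}^{n-1}(p^i+1)$. By the translation invariance of $\cB$, that last count is also the number of affine Lagrangians containing any pair of distinct points of $V$, so the binder forms a $(p^{2n},p^{n},\prod_{i=1}^{s}(p^i+1))$-BIBD.

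The main obstruction to claiming the final statement as worded is numerical. The two replication numbers $(p^{s+1}-1)/(p-1)=1+p+\cdots+p^s$ and $\prod_{i=1}^{s}(p^i+1)$ agree only for $s\in\{0,1\}$, giving $1$ and $p+1$, which reproduces Theorems~\ref{thm:AG} and~\ref{thm:AG2}; already at $s=2$ the former is $p^2+p+1$ whereas the latter is $(p+1)(p^2+1)=p^3+p^2+p+1$, and an explicit check (count Lagrangians of $\bZ_p^6$ containing $(1,0,0,0,0,0)$) confirms the larger value. Consequently, the outlined argument proves the affine Lagrangian BIBD with $\lambda=\prod_{i=1}^{s}(p^i+1)$ and simultaneously refutes the exponent-series $\lambda=(p^{s+1}-1)/(p-1)$ for all $s\ge 2$; the conjectured formula appears to be a low-dimensional coincidence extrapolated from $s=0,1$, and the natural amendment is to replace it with $\prod_{i=1}^{s}(p^i+1)$.
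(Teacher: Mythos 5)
The statement you were asked to prove is labeled a \emph{conjecture} in the paper; the authors give no proof of it, so there is no argument of theirs to compare yours against. What you have produced is, as far as I can check, a correct resolution of the conjecture that in fact \emph{refutes} the stated value of $\lambda$ for $s\ge 2$. Your central identity $\TP(v,\tilde v,\widehat v)=-\zeta_p^{-\omega(\tilde v-v,\widehat v-v)/2}$ does follow from \eqref{eqn:TP2} of Lemma~\ref{lem:TP} by direct expansion, and since $2$ is invertible mod $p$ the simplex criterion of Theorem~\ref{thm:TP} becomes exactly the vanishing of $\omega$ on all differences from a common basepoint. The span argument is the key step and it is sound: a pairwise $\omega$-isotropic set $S\ni 0$ spans an isotropic subspace $W$ with $\abs{W}\le p^{s+1}$, and $\abs{S}=p^{s+1}$ forces $S=W$ Lagrangian; the converse direction is immediate. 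This identifies $\cB(\cG(p,\dots,p))$ with the set of cosets of Lagrangian subspaces of $(\bZ_p^{2(s+1)},\omega)$, which is consistent with Proposition~\ref{prop:Ab} (graphs of symmetric matrices are precisely the Lagrangians transverse to the ``momentum'' subspace) and with Theorems~\ref{thm:AG} and~\ref{thm:AG2} at $s=0,1$. Your counts are standard and correct: there are $\prod_{i=1}^{s+1}(p^i+1)$ Lagrangians, each with $p^{s+1}$ cosets, and the number containing a fixed nonzero vector is $\prod_{i=1}^{s}(p^i+1)$ by the reduction to $u^\perp/\langle u\rangle$ (or by double counting), so the binder is a $(p^{2(s+1)},p^{s+1},\prod_{i=1}^{s}(p^i+1))$-BIBD. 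Since $\prod_{i=1}^{s}(p^i+1)$ and $(p^{s+1}-1)/(p-1)$ agree only for $s\in\{0,1\}$, the conjecture's $\lambda$ is a low-dimensional coincidence, exactly as you say; already for $\cG(p,p,p)$ the pair $\{0,(e_1,0)\}$ lies in $(p+1)(p^2+1)$ blocks, not $p^2+p+1$.

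Two small remarks. First, the aside about Clifford-group covariance forcing the identification is the only loosely argued sentence (one would need the metaplectic action to preserve the frame up to switching equivalence), but it is dispensable: the characterization already follows from Lemma~\ref{lem:TP} and Theorem~\ref{thm:TP} alone, plus translation invariance of the triple-product criterion for the $2$-design property. Second, be explicit that ``all pairwise differences within $\beta$ are $\omega$-isotropic'' means $\omega(\tilde v-v,\widehat v-v)=0$ for all triples with a common basepoint $v$; your subsequent use of $S=\beta-v_0$ makes this clear, but the phrase on its own is ambiguous. With those cosmetic fixes, your argument both proves the corrected statement and disproves the conjectured parameter for $s\ge 2$.
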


\ejk{In the preceding results, we showed that Gabor-Steiner equiangular tight frames over any type of finite abelian group have rich binders. It is proven in~\cite{King19} that $\cG(p, p, \hdots, p)$ for odd prime $p$ has a high level of symmetry. We believe that $\cG(m)$  \bgb{could provide a set of examples of equiangular tight frames with a variety of symmetry groups for more general choices of the vector} $m$.}

\bgb{We conclude this section by commenting on the relation between these results and related work.}
One may show with help of triple products that the Gabor-Steiner equiangular tight frames are equivalent to certain indirect constructions of equiangular tight frames.  Let $p$ be an odd prime and $\zeta_p$ be a primitive $p$th root of unity. By \cite[Theorem 5.2]{BoEl10b}, there is a $p^2 \times p^2$ Butson Hadamard such that the off-diagonal elements are the negatives of the phases of the Gram matrix $\Phi^\ast \Phi$ of an equiangular tight frame $\Phi$ of $p^2$ vectors in $\bC^{p(p-1)/2}$. 
Let $(k,\kappa), (\tilde{k},\tilde{\kappa}) \in \bZ_p^2$ with $(k,\kappa)\neq (\tilde{k},\tilde{\kappa})$, then $\ip{\varphi_{k,\kappa}}{\varphi_{\tilde{k},\tilde{\kappa}}} = -\zeta_p^{-\tilde{k}\kappa + k \tilde{\kappa}}$.  Thus, 
\[
TP((k,\kappa),(\tilde{k},\tilde{\kappa}),(\widehat{k},\widehat{\kappa}))=-\zeta_p^{k (\tilde{\kappa}- \widehat{\kappa}) + \tilde{k}_\ell(\widehat{\kappa}- \kappa) + \widehat{k}(\kappa - \tilde{\kappa})}.
\]
We note that $\zeta_p^{-2}$ is still a primitive $p$th root of unity. Thus the triple products arising from the construction in \cite{BoEl10a,BoEl10b} (or, more precisely, the Naimark complement of the equiangular tight frames constructed in those papers) with $\zeta_p$ are the same as the triple products for the Gabor-Steiner equiangular tight frame $\cG(p)$ constructed in Definition~\ref{defn:psi} over $\bZ_p$ with primitive root $\zeta_p^{-2}$; hence by Theorem~\ref{thm:TP} they are switching equivalent. Another equivalent construction appears in \cite[Theorem 6.4]{IJM17}.  Namely, let $(m_0,\hdots,m_s)$ be a vector of odd integers and $G = \oplus_{\ell=0}^s \bZ_{m_\ell}$.  Then the  average of the $\abs{m} \times \abs{m}$ identity $I_{\abs{m}}$ and the \emph{parity operator} $\frac{1}{\abs{m}} \sum_{(k,\kappa) \in G \times G} D_m^{(k,\kappa)}$ yields a rank $(\abs{m}-1)/2$ projection  $P_O$ in $\SYM_{\abs{m}}(\bC)$ \cite{ABDF17}.  The orbit of $P_O$ under the complete set of displacement operators $\{ D_m^{(k,\kappa)} : (k,\kappa) \in G \times G\}$ yields an equiangular (with respect to the Hilbert-Schmidt inner product) tight frame of $\abs{m}^2$ matrices in an $\abs{m}(\abs{m}-1)/2$-dimensional subspace of operators on $L^2(G) \rightarrow L^2(G)$.  Let $(k,\kappa), (\tilde{k},\tilde{\kappa}) \in G \times G$ with $(k,\kappa)\neq (\tilde{k},\tilde{\kappa})$.  Up to normalization of the operators and selection of primitive $m_\ell$th roots of unity, the inner products are 
\[
\ip{D_m^{(k,\kappa)}P_O}{D_m^{(\tilde{k},\tilde{\kappa})}P_O}_{H.S.} = -\prod_{\ell=0}^s \zeta_{m_\ell}^{(-\tilde{k}_{\ell}\kappa_\ell + k_\ell \tilde{\kappa}_{\ell})/2}
\]
and thus
\[
\TP((k,\kappa),(\tilde{k},\tilde{\kappa}),(\widehat{k},\widehat{\kappa})) = -\prod_{\ell=0}^s \zeta_{m}^{[k_\ell (\widehat{\kappa}_\ell-\tilde{\kappa}_\ell ) + \tilde{k}_\ell( \kappa_\ell-\widehat{\kappa}_\ell) + \widehat{k}_\ell(\tilde{\kappa}_\ell-\kappa_\ell )]/2},
\]
as desired. 

\ejk{In \bgb{another related paper}~\cite{FJMPW17}, incidence matrices of certain balanced incomplete block designs are phased in a particular way to yield equiangular tight frames for their spans.  These are called \emph{phased BIBD ETFs}.  We analyze the Naimark complement of the specific construction in~\cite[Theorem 5.1]{FJMPW17} (arising from a so-called \emph{$(q^2,q,q)$-polyphase BIBD ETF}), which has the same parameters as Gabor-Steiner ETFs for groups of size odd prime power $q=p^{s+1}$. Let $\chi$ be a non-trivial character over the additive group of $GF(q)$ and let $(k,\kappa), (\tilde{k},\tilde{\kappa}) \in GF(q)^2$.  Then the triple products of the Naimark complement of the phased BIBD ETF are
\[
TP((k,\kappa),(\tilde{k},\tilde{\kappa}),(\widehat{k},\widehat{\kappa}))=-\chi\left(k (\tilde{\kappa}- \widehat{\kappa}) + \tilde{k}_\ell(\widehat{\kappa}- \kappa) + \widehat{k}(\kappa - \tilde{\kappa})\right),
\]
with multiplication defined over the finite field.  Thus, when $q = p$, the Naimark complement of the corresponding phased BIBD ETF with $p^2$ vectors is switching equivalent (up to choice of primitive $p$th root) to $\cG(p)$.  However, for $q = p^2$, the triple products of the Naimark complement of the corresponding phased BIBD ETF are of the form
\[
TP((k,\kappa),(\tilde{k},\tilde{\kappa}),(\widehat{k},\widehat{\kappa}))=-\zeta_p^{\textrm{EXP}},
\]
where 
\begin{align*}
\textrm{EXP} = &k_0 \tilde{\kappa}_0 - k_1\tilde{\kappa}_1 - \tilde{k}_0\kappa_0 + \tilde{k}_1\kappa_1 + a\left(k_0 \tilde{\kappa}_1 + k_1\tilde{\kappa}_0 - \tilde{k}_0\kappa_1 - \tilde{k}_1\kappa_0\right)\\
&+\tilde{k}_0 \hat{\kappa}_0 - \tilde{k}_1\hat{\kappa}_1 - \hat{k}_0\tilde{\kappa}_0 + \hat{k}_1\tilde{\kappa}_1 + a\left(\tilde{k}_0 \hat{\kappa}_1 + \tilde{k}_1\hat{\kappa}_0 - \hat{k}_0\tilde{\kappa}_1 - \hat{k}_1\tilde{\kappa}_0\right)\\
&+\hat{k}_0 {\kappa}_0 - \hat{k}_1 {\kappa}_1 - {k}_0\hat{\kappa}_0 + {k}_1\hat{\kappa}_1 + a\left(\hat{k}_0 {\kappa}_1 + \hat{k}_1{\kappa}_0 - {k}_0\hat{\kappa}_1 - {k}_1\hat{\kappa}_0\right)
\end{align*}
for some choice $a \in \bZ_p \setminus \{0\}$ depending on the character $\chi$, which are unequal to the triple products of $\cG(p,p)$ which have exponent
\[
\textrm{EXP}  = k_0 \tilde{\kappa}_0 - k_1\tilde{\kappa}_1 + \tilde{k}_0\kappa_0 - \tilde{k}_1\kappa_1 + \tilde{k}_0 \hat{\kappa}_0 - \tilde{k}_1\hat{\kappa}_1 + \hat{k}_0\tilde{\kappa}_0 - \hat{k}_1\tilde{\kappa}_1+\hat{k}_0 {\kappa}_0 - \hat{k}_1 {\kappa}_1 + {k}_0\hat{\kappa}_0 - {k}_1\hat{\kappa}_1. 
\]
Since further some of the triple products of $\cG(p^2)$ involve primitive $p^2$th roots of unity not raised to a multiple of $p$, the phased BIBD ETF with $q^2 = p^4$ vectors is not switching equivalent to any Gabor-Steiner ETF.}

\section{Spectrahedral arrangements from binders of equiangular tight frames}\label{sec:SAfromB}

As in Section~\ref{sec:MUB}, we can use subsets of vectors in $\Phi$, here with help from $\cG(m)$, to generate spectrahedral arrangements which correspond to subspace packings.  Namely, for each simplex $\beta$ in a binder $\cB$ we consider the collection of appropriately scaled orthogonal projections onto the subspaces spanned by these vectors.  The results are due to the following lemma. 
\begin{lem}\label{overlappack}
Let $\cB$ be a binder of an equiangular tight frame $\Phi$ of $n$ vectors in $\bF^d$.  Then for any two $\beta, \beta' \in \cB$ with corresponding orthogonal projections $P_\beta, P_{\beta'}$, 
\[
\tr(P_\beta P_{\beta'}) = 1 + \abs{\beta \cap \beta'} \frac{\abs{\beta}-2}{\abs{\beta}}.
\]
\end{lem}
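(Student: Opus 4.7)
The plan is to write each projection as a scaled sum of the rank-one outer products of the frame vectors it covers, and then reduce $\tr(P_\beta P_{\beta'})$ to a double sum of squared pairwise inner products that can be evaluated using the ETF structure of $\Phi$.

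First, I would unpack Definition~\ref{defn:simpl}: each $\Phi_\beta = \{\varphi_j\}_{j \in \beta}$ is switching equivalent to an equiangular tight frame for a $(|\beta|-1)$-dimensional span, hence is itself a tight frame for its own span. Indeed, if $\Psi = U \Phi_\beta P D$ with $P$ a permutation matrix and $D$ unimodular diagonal, then $\Psi \Psi^* = U \Phi_\beta \Phi_\beta^* U^*$, so the tightness of $\Psi$ transfers to $\Phi_\beta$. This yields
\begin{equation*}
\sum_{j \in \beta} \varphi_j \varphi_j^* = c_\beta P_\beta,
\end{equation*}
and taking traces, using the common frame-vector norm $\nu$ in $\Phi$ and $\tr P_\beta = |\beta|-1$, gives $c_\beta = |\beta|\nu^2/(|\beta|-1)$. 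Write $k = |\beta|$ and observe the analogous formula for $\beta'$.

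Next, I would pin down the inner product magnitudes appearing in the double sum. Applied to the rank-one projections $(1/\nu^2)\varphi_j\varphi_j^*$, the first Rankin bound in Theorem~\ref{thm:rankin} is saturated by $\Phi$, yielding the global identity
\begin{equation*}
|\langle \varphi_j, \varphi_l\rangle|^2 = \nu^4 \frac{n-d}{d(n-1)} \quad \text{for all } j \ne l.
\end{equation*}
Crucially, this magnitude applies to pairs both inside a single simplex and across different simplices. Matching it with the corresponding Rankin bound applied inside the $(k-1)$-dimensional simplex forces the common size to satisfy $(k-1)^2 = d(n-1)/(n-d)$, consistent with Theorem~\ref{thm:spark}, and gives the off-diagonal squared inner product in the convenient form $\nu^4/(k-1)^2$.

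Finally, I would directly expand
\begin{equation*}
\tr(P_\beta P_{\beta'}) = \left(\frac{k-1}{k\nu^2}\right)^2 \sum_{j \in \beta}\sum_{l \in \beta'} |\langle \varphi_j, \varphi_l\rangle|^2,
\end{equation*}
separating the $|\beta \cap \beta'|$ diagonal contributions of $\nu^4$ from the remaining $k^2 - |\beta \cap \beta'|$ off-diagonal contributions of $\nu^4/(k-1)^2$. A short algebraic simplification collapses the expression to $1 + |\beta\cap\beta'|(k-2)/k$, which is the claimed formula. The main step requiring care is justifying that the frame operator of $\Phi_\beta$ is a scalar multiple of $P_\beta$ via switching equivalence; once that is in hand, the rest is routine bookkeeping.
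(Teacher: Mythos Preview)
Your proposal is correct and follows essentially the same approach as the paper: write $P_\beta$ as a scalar multiple of $\Phi_\beta\Phi_\beta^*$, expand $\tr(P_\beta P_{\beta'})$ as a double sum of $|\langle\varphi_j,\varphi_l\rangle|^2$, and split into the $|\beta\cap\beta'|$ diagonal terms and the remaining off-diagonal terms using the common ETF coherence $\nu^4/(k-1)^2$. You supply a bit more detail than the paper does---in particular, the switching-equivalence argument justifying tightness of $\Phi_\beta$ on its span and the derivation of $(k-1)^2 = d(n-1)/(n-d)$---but the structure of the argument is the same.
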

\begin{proof}
Let $\norm{\varphi}$ denote the norm of any of the vectors in $\Phi$, which is necessarily constant. For an arbitrary $\beta \in \cB$, all elements of $\cB$ have size $\abs{\beta}$ and for any distinct $\varphi_k, \varphi_{k'} \in \Phi$, 
\[
\absip{\varphi_k}{\varphi_{k'}}^2 = \frac{\norm{\varphi}^2}{(\abs{\beta}-1)^2}.
\]
We further note that if $\beta\in \cB$, then the orthogonal projection onto the subspace spanned by the corresponding vectors is $P_\beta = \frac{\abs{\beta}-1}{\abs{\beta}\norm{\varphi}}\Phi_{\beta}\Phi_{\beta}^\ast$. 
Let $\beta, \beta' \in \cB$.  Then
\begin{align*}
\tr(P_\beta P_{\beta'}) &= \tr\left( \frac{\abs{\beta}-1}{\abs{\beta}\norm{\varphi}}\Phi_{\beta}\Phi_{\beta}^\ast\frac{\abs{\beta'}-1}{\abs{\beta'}\norm{\varphi}}\Phi_{\beta'}\Phi_{\beta'}^\ast\right) \\
&= \frac{(\abs{\beta}-1)^2}{\abs{\beta}^2\norm{\varphi}^2} \sum_{k \in \beta} \sum_{k' \in \beta'} \absip{\varphi_k}{\varphi_{k'}}^2\\
&= \frac{(\abs{\beta}-1)^2}{\abs{\beta}^2\norm{\varphi}^2} \left(\abs{\beta\cap\beta'} \norm{\varphi}^2 + (\abs{\beta}^2 - \abs{\beta \cap \beta'})\frac{\norm{\varphi}^2}{(\abs{\beta}-1)^2} \right)\\
&= 1 + \abs{\beta \cap \beta'} \frac{\abs{\beta}-2}{\abs{\beta}}.
\end{align*}
\end{proof}
The following theorem is a specific application of \cite[Theorem 6.2]{FJKM17}, which generalizes \cite[Section 2.4]{Zauner2011}, but we will prove it here directly with Lemma~\ref{overlappack} and state it within the context of spectrahedron arrangements.

\begin{thm}\label{prop:ff1}
Let $m=(m_0, \hdots, m_s)$ be a vector of odd integers $\geq 3$ with $\abs{m} = \prod_{\ell = 0}^s m_\ell$ and construct $\cG(m)$ with binder $\cB$.  There exists at least one $\cA\subset \cB$ which is a partition of $\left(\bigoplus_{\ell=0}^s \bZ_{m_\ell}\right)^2$.  Then for $\beta, \beta' \in \cA$ with $\beta \neq \beta'$ 
\[
\tr(P_{\beta} P_{\beta'}) = 1, 
\]
where $P_{\beta}$ is the orthogonal projection onto the subspace spanned by the vectors in $\beta$.  Thus $\cA$ generates an $(\abs{m}, \abs{m}(\abs{m}-1)/2)$-spectrahedron arrangement of purity $1/(\abs{m}-1)$ which saturates the first Rankin bound, that is, an equichordal arrangement of $\abs{m}$  $(\abs{m}-1)$-dimensional subspaces of $\bC^{\abs{m}(\abs{m}-1)/2}$.
\end{thm}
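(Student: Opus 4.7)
The plan is to assemble the conclusion directly from Theorem~\ref{thm:Steiner}, Lemma~\ref{overlappack}, and the first Rankin bound from Theorem~\ref{thm:rankin}, so no new technical machinery is required. I would first exhibit the required partition $\cA$ explicitly. By Theorem~\ref{thm:Steiner}, for each $k \in G := \bigoplus_{\ell=0}^s \bZ_{m_\ell}$ the set $\beta_k = \{\pi(k,\kappa)\psi : \kappa \in G\}$ is a simplex in $\cB$. Since the frame vectors of $\cG(m)$ are indexed by $(k,\kappa) \in G \times G$, the collection $\cA := \{\beta_k : k \in G\}$ consists of $|m|$ disjoint subsets of size $|m|$ whose union is all of $G \times G = (\bigoplus_{\ell=0}^s \bZ_{m_\ell})^2$. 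This gives the required partition.

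Next, for any two distinct $\beta, \beta' \in \cA$ the supports are disjoint, so $|\beta \cap \beta'| = 0$ and $|\beta| = |m|$. Plugging these into Lemma~\ref{overlappack} immediately yields
\[
\tr(P_\beta P_{\beta'}) \;=\; 1 + 0 \cdot \frac{|m|-2}{|m|} \;=\; 1.
\]
Since each $P_\beta$ is the orthogonal projection onto the $(|m|-1)$-dimensional span of a simplex, the trace-normalized states $W_\beta := \frac{1}{|m|-1} P_\beta$ live in the spectraplex of $\bC^{|m|(|m|-1)/2}$ and have purity $\tr(W_\beta^2) = \frac{1}{|m|-1}$, while $\tr(W_\beta W_{\beta'}) = \frac{1}{(|m|-1)^2}$ for $\beta \neq \beta'$.

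Finally I would check that this saturates the first Rankin bound from Theorem~\ref{thm:rankin}. With $n = |m|$, $d = |m|(|m|-1)/2$ and $\gamma = 1/(|m|-1)$, the right-hand side of \eqref{eqn:Rankin1} equals
\[
\frac{n - \gamma d}{(n-1)d} \;=\; \frac{|m| - \tfrac{1}{|m|-1} \cdot \tfrac{|m|(|m|-1)}{2}}{(|m|-1) \cdot \tfrac{|m|(|m|-1)}{2}} \;=\; \frac{|m|/2}{|m|(|m|-1)^2/2} \;=\; \frac{1}{(|m|-1)^2},
\]
which coincides with the constant value computed above. Because $n = |m| \le D+1$ is satisfied in the ambient spectraplex and the pairwise inner products are all equal to the bound, $\{W_\beta\}_{\beta \in \cA}$ is an equichordal optimal $(|m|, |m|(|m|-1)/2)$-spectraplex arrangement of purity $1/(|m|-1)$.

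There is no real obstacle here; the only mildly delicate point is keeping track of the normalization between projections $P_\beta$ (of rank and trace $|m|-1$) and the trace-one states $W_\beta$, which is what translates the combinatorial identity of Lemma~\ref{overlappack} into the packing statement matching the Rankin bound.
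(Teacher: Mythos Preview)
Your proposal is correct and follows essentially the same approach as the paper's proof: exhibit the explicit partition $\cA = \{\beta_k : k \in G\}$ from Theorem~\ref{thm:Steiner}, compute $\tr(P_\beta P_{\beta'}) = 1$ via Lemma~\ref{overlappack} with $|\beta\cap\beta'|=0$, and verify that the trace-normalized states saturate the first Rankin bound. Your write-up is in fact slightly more explicit than the paper's in citing Lemma~\ref{overlappack} and in laying out the Rankin computation.
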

\begin{proof}
We note that $\cA=\{ \beta_k : k \in \bigoplus_{\ell=0}^s \bZ_{m_\ell}\}$ is one such subset of the binder which is a partition of $\left(\bigoplus_{\ell=0}^s \bZ_{m_\ell}\right)^2$.  For each $\beta \in \cA$, we define $W_\beta = P_\beta/\tr(P_\beta)$, which have trace $1$. Further for each $\beta \in \cA$, \[
\tr(W_\beta^2) = \tr(P_\beta^2)/\tr(P_\beta)^2 = \frac{1}{\abs{m}-1};
\]
thus the purity is $\gamma =1/(\abs{m}-1)$.


Let $\beta, \beta' \in \cA$ with $\beta \neq \beta'$. Then
\[
\frac{1}{\gamma^2} tr(W_{\beta} W_{\beta'}) = \tr(P_{\beta} P_{\beta'}) =1 = (\abs{m}-1)^2 \frac{2\abs{m} - \abs{m}}{\abs{m}(\abs{m}-1)^2},
\]
which saturates the first Rankin  bound \eqref{eqn:Rankin1}.

\end{proof}

We note that in \cite{BoPa15}, Gabor frames are formed from the orbit of $\mathbbm{1}_D$ under $\sigma(\bZ_m \times \bZ_m)$, for some $m \in \bN$, where $D$ is a so-called \emph{difference set} in $\bZ_m$ and $\mathbbm{1}_D$ is the vector that is $1$ on $D$ and $0$ on $\bZ_m \backslash D$.  The resulting frames are not equiangular but rather \emph{biangular}; however, $\{\beta_k = \{ \sigma(k, \kappa) \mathbbm{1}_D : \kappa \in  \bZ_{m} \}: k \in \bZ_m\}$ forms an optimal (equichordal) $(m, m)$-spectrahedron arrangement of purity $1/\abs{D}$, where in general $\abs{D} < m-1$.

\begin{thm}\label{thm:affrank}
Let $\Phi$ be an equiangular tight frame with binder $\cB$ which is a simple affine $(v,k,\lambda)$-BIBD.  Then the collection of orthogonal projections onto the simplices $\{P_\beta\}_{\beta \in \cB}$ generates a $\left(\frac{v(v-1)}{k(k-1)}\lambda,\frac{(k-1)^2 v}{k^2-2k+v}\right)$-spectrahedron arrangement with purity $1/(k-1)$ which saturates the second Rankin spectrahedron bound, that is, an arrangement of $\frac{v(v-1)}{k(k-1)}\lambda$  scaled projections of rank $k-1$ on the Hilbert space $\bC^{\frac{(k-1)^2 v}{k^2-2k+v}}$.
\end{thm}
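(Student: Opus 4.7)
The plan is to verify each structural claim in the statement in turn, using Lemma~\ref{overlappack} as the main workhorse. First, I would identify the ambient Hilbert space dimension. The existence of a simplex of size $k$ in the binder (together with Definition~\ref{defn:simpl}) forces the pairwise inner products of $\Phi$ to satisfy $|\langle \varphi_i,\varphi_j\rangle|^2 = \|\varphi\|^4/(k-1)^2$. Since $\Phi$ is an equiangular tight frame of $v$ vectors in some $\bF^{d}$, the Welch/first Rankin bound equality also gives $|\langle \varphi_i,\varphi_j\rangle|^2 = (v-d)\|\varphi\|^4/[d(v-1)]$. Equating the two yields $d = (k-1)^2 v/(k^2-2k+v)$, matching the Hilbert space appearing in the statement.

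Next, since each $\Phi_\beta$ for $\beta \in \cB$ spans a $(k-1)$-dimensional subspace, the projection $P_\beta$ has rank $k-1$, and $W_\beta := P_\beta/(k-1)$ is a trace-one state with purity $\tr(W_\beta^2) = 1/(k-1)$. The number of blocks is $n = v(v-1)\lambda/[k(k-1)]$ by Corollary~\ref{cor:affint}.

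The main step is to bound the pairwise Hilbert-Schmidt inner products. Applying Lemma~\ref{overlappack} gives
\[
\tr(P_\beta P_{\beta'}) = 1 + |\beta\cap\beta'|\,\tfrac{k-2}{k},
\]
and by Corollary~\ref{cor:affint} the intersection sizes for an affine design lie in $\{k,\,k^2/v,\,0\}$. Since $|\beta| = |\beta'| = k$ and the BIBD is simple, distinct $\beta,\beta' \in \cB$ can only intersect in $k^2/v$ or $0$ elements. Substituting each value and dividing by $(k-1)^2$ gives
\[
\tr(W_\beta W_{\beta'}) \in \Bigl\{\tfrac{k^2 - 2k + v}{v(k-1)^2},\, \tfrac{1}{(k-1)^2}\Bigr\} = \Bigl\{\tfrac{1}{d},\, \tfrac{1}{(k-1)^2}\Bigr\},
\]
with the maximum $1/d$ attained by any pair of blocks from different parallel classes. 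This saturates the second Rankin spectrahedron bound \eqref{eqn:Rankin2}.

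The only nontrivial obstacle is the algebraic verification that $(v + k(k-2))/[v(k-1)^2]$ simplifies to $1/d$ using $d = (k-1)^2 v/(k^2 - 2k + v)$, which is routine. A minor subtlety is that attainment of the maximum requires the affine design to have at least two parallel classes, i.e., two blocks intersecting in exactly $k^2/v$ points; this holds in every non-degenerate affine BIBD (otherwise the blocks would form a single partition of $\{1,\dots,v\}$ and the design would collapse).
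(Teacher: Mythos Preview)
Your proposal is correct and follows essentially the same route as the paper's proof: compute the ambient dimension $d$ by equating the simplex inner-product value $1/(k-1)^2$ with the Welch/first Rankin value $(v-d)/[d(v-1)]$, use Corollary~\ref{cor:affint} to count blocks and to restrict the distinct-block intersection sizes to $\{0,\,k^2/v\}$ (via simplicity), and then feed these into Lemma~\ref{overlappack} to see that $\tr(W_\beta W_{\beta'})\in\{1/(k-1)^2,\,1/d\}$ with maximum $1/d$. Your closing remark that the maximum value $1/d$ is actually attained (equivalently, that an affine BIBD has more than one parallel class) is a point the paper glosses over; it is true and worth noting.
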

\begin{proof}
Since $\cB$ is a $(v,k,\lambda)$-BIBD, we know that $\Phi$ has $v$ vectors, and  it follows from Corollary~\ref{cor:affint} that the binder has $\frac{v(v-1)}{k(k-1)}\lambda$ simplices spanning subspaces of dimension $k-1$. Further, the evaluation of the first Rankin spectrahedron bound \eqref{eqn:Rankin1} on both the simplices and the entire equiangular tight frame yields
\[
\frac{1}{k-1} = \sqrt{\frac{v-d}{d(v-1)}} \quad \Rightarrow \quad d=\frac{(k-1)^2v}{k^2 -2k+v},
\]
where $d$ is the dimension of the span of $\Phi$. Thus, for each $\beta \in \cB$
$W_\beta = P_\beta/(k-1)$ satisfies
\[
\tr W_j = \frac{k-1}{k-1} =1 \quad \textrm{and} \quad \tr W_j^2 = \frac{k-1}{(k-1)^2} = \frac{1}{k-1},
\]
yielding the parameters of the spectrahedron arrangement.

Since $\cB$ is simple, it does not have any repeated blocks, and we may apply Corollary~\ref{cor:affint} to determine that any two distinct blocks $\beta, \beta' \in \cB$ intersect in a set of size $1$ or $k^2/v$. It follows from Lemma~\ref{overlappack} that in the former case, $\tr(P_\beta P_{\beta'})=1$ and in the latter
\[
(k-1)^2\tr(W_{\beta} W_{\beta'})= \tr(P_\beta P_{\beta'}) = \frac{(k-1)^2}{\frac{(k-1)^2v}{k^2-2k+v}} = (k-1)^2 \frac{1}{\frac{(k-1)^2v}{k^2-2k+v}},
\]
which saturates the second Rankin bound \eqref{eqn:Rankin2}.
\end{proof}

\begin{cor}\label{cor:ff2}
Let $p$ be an odd prime and construct $\cG(p)$ with binder $\cB$.  Then for $\beta, \beta' \in \cB$ with $\beta \neq \beta'$
\[
\tr(P_{\beta} P_{\beta'}) \in \left\{1, \frac{2(p-1)}{p} \right\}
\]
where $P_{\beta}$ is the orthogonal projection onto the subspace spanned by the vectors in $\beta$.  Thus $\cB$ generates a $(p(p+1),p(p-1)/2)$-spectrahedron arrangement with purity $1/(p-1)$ which saturates the second Rankin bound, that is, an arrangement of $p(p+1)$ 
states of rank $p-1$ on the Hilbert space $\bC^{p(p-1)/2}$. This spectrahedron arrangement is optimal in the spectrahedron $\cS$ defined by
the trace-normalized positive semidefinite operators in the real span of
$ \{ \pi(k,\kappa) \psi \otimes \psi^\ast \pi(k,\kappa)^\ast: (k,\kappa) \in  \bZ_p \times \bZ_p\} \, .
$
\end{cor}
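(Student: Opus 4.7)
The plan is to identify the binder via Theorem~\ref{thm:AG}, apply Lemma~\ref{overlappack} to compute the pairwise Hilbert--Schmidt inner products, and then deduce optimality from saturation of the second Rankin bound together with a dimension count on $\cS$.

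First I would invoke Theorem~\ref{thm:AG} to write $\cB = AG(2,p)$, a $(p^2,p,1)$-BIBD whose $p(p+1)$ blocks (affine lines in $\bZ_p^2$) partition into $p+1$ parallel classes of $p$ lines each. Any two distinct affine lines are either parallel and disjoint, or meet in exactly one point, so $|\beta \cap \beta'| \in \{0,1\}$ for all distinct $\beta, \beta' \in \cB$. Plugging $|\beta|=p$ and these two intersection sizes into Lemma~\ref{overlappack} yields
\[
\tr(P_\beta P_{\beta'}) \in \left\{1,\; 1 + \tfrac{p-2}{p}\right\} = \left\{1,\; \tfrac{2(p-1)}{p}\right\},
\]
as claimed. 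Setting $W_\beta = P_\beta/(p-1)$ gives $\tr W_\beta = 1$ and $\tr W_\beta^2 = 1/(p-1)$, so the purity is $\gamma = 1/(p-1)$; correspondingly $\max_{\beta \ne \beta'} \tr(W_\beta W_{\beta'}) = 2(p-1)/(p(p-1)^2) = 2/(p(p-1)) = 1/d$ for $d=p(p-1)/2$, saturating the second Rankin bound~\eqref{eqn:Rankin2}. The arrangement size $p(p+1)$ and the subspace dimension $p(p-1)/2$ also agree with the parameters given by Theorem~\ref{thm:affrank} applied to $(v,k,\lambda)=(p^2,p,1)$.

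To see that each $W_\beta$ lies in $\cS$, I would use that a simplex of $p$ unit-norm vectors is a tight frame for its $(p-1)$-dimensional span, so
\[
P_\beta \;=\; \tfrac{p-1}{p\|\psi\|^2}\sum_{(k,\kappa)\in\beta} \pi(k,\kappa)\psi\psi^\ast\pi(k,\kappa)^\ast,
\]
exhibiting $W_\beta$ as a trace-normalized positive semidefinite element of the real span of the Weyl--Heisenberg orbit of $\psi\psi^\ast$. For optimality, the dimension of $\cS$ is bounded by $D \le p^2 - 1$ because $\cS$ lives in the real span of the $p^2$ orbit operators; hence $n = p(p+1) = p^2 + p > p^2 \ge D+1$, placing us in the regime where Theorem~\ref{thm:rankin} guarantees the lower bound $\max_{\beta \ne \beta'}\tr(W_\beta W_{\beta'}) \ge 1/d$ for every $n$-element spectraplex arrangement inside $\cS$. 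Since our arrangement meets this bound with equality, it is optimal. The main point to watch is the dimension bookkeeping on $\cS$: one must confirm $n > D+1$ so that the second Rankin regime is active rather than the first, which is immediate here because the orbit has only $p^2$ rank-one operators, but it is essential to restrict to $\cS$ rather than enlarging to the full spectraplex of $\SYM_{p(p-1)/2}(\bC)$.
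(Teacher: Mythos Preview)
Your proof is correct and follows essentially the same route as the paper: identify $\cB = AG(2,p)$ via Theorem~\ref{thm:AG}, read off the possible intersection sizes of distinct lines, and then use Lemma~\ref{overlappack} (the paper packages this step into Theorem~\ref{thm:affrank}) to obtain the two inner-product values and the saturation of the second Rankin bound, finishing with the dimension count $n=p(p+1)>p^2\ge D_{\cS}+1$. Your explicit verification that each $W_\beta$ lies in $\cS$ via the tight-frame decomposition of $P_\beta$ is a nice addition the paper leaves implicit.
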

\begin{proof}
We know from Theorem~\ref{thm:AG} that $\cB = AG(2,p)$, which is a simple, affine $(p^2,p,1)$-BIBD. Thus the first part of the claim follows immediately from Theorem~\ref{thm:affrank}.
We note that there are $p(p+1)$ elements in the packing and $p(p+1)> p^2-1 = D_{\cS}$.  Thus the arrangement saturates the second Rankin spectrahedron bound \eqref{eqn:Rankin2} with sufficiently many elements and is optimal.
\end{proof}
The evidence suggests that the spectrahedron arrangement in Corollary~\ref{cor:ff2} is not an optimal spectraplex arrangement.  Initially, for $p > 3$ 
\[
p(p+1) < \frac{p^2(p^2+1)}{4} = \DSYM = D_{\textrm{spectraplex}} + 1,
\]
meaning that saturation of the second Rankin bound does not guarantee optimality as a packing of the spectraplex.  Furthermore, numerical testing using a Matlab implementation of \cite{DHST08} suggests that there exists an equichordal $(m(m+1),m(m-1)/2)$-spectraplex arrangement with purity $1/(m-1)$ that saturates the first Rankin spectrahedron bound for $m=4,5,6,7$.  In any case, the numerical experiments yield arrangements which are more spread with respect to chordal distance.

%


\begin{cor}\label{cor:ff3}
Let $p$ be an odd prime and construct $\cG(p,p)$ with binder $\cB$.  Then for $\beta, \beta' \in \cB$ with $\beta \neq \beta'$
\[
\tr(P_{\beta} P_{\beta'}) \in \left\{1, \frac{2(p^2-1)}{p^2}, \frac{(p-1)(p+2)}{p} \right\}
\]
where $P_{\beta}$ is the orthogonal projection onto the subspace spanned by the vectors in $\beta$.  Further, $\cB$ generates a $(p^2(p^2+1)(p+1),p^2(p^2-1)/2)$-spectrahedron arrangement of purity $1/(p^2-1)$.
\end{cor}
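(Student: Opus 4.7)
The plan is to apply Lemma~\ref{overlappack} directly, since the structure of the binder of $\cG(p,p)$ is fully determined by Theorem~\ref{thm:AG2}, which identifies $\cB$ with $\widetilde{AG}(2,p^2)$. First I would invoke Theorem~\ref{thm:bibdsets} to recall that every block $\beta \in \widetilde{AG}(2,p^2)$ has size $p^2$, and Theorem~\ref{thm:bibddecomp} to note that $\widetilde{AG}(2,p^2)$ is a simple BIBD whose blocks are affine $\bZ_p$-subspaces, so that any two distinct blocks intersect in a set of size $|\beta \cap \beta'| \in \{p, 1, 0\}$ (the value $p^2$ is ruled out by simplicity).

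Next, since $\cG(p,p)$ is an equiangular tight frame in $\bC^{p^2(p^2-1)/2}$ whose binder is $\cB$, I would substitute $|\beta|=p^2$ and each of the three admissible intersection sizes into the formula
\[
\tr(P_\beta P_{\beta'}) = 1 + |\beta \cap \beta'| \cdot \frac{p^2-2}{p^2}
\]
from Lemma~\ref{overlappack}. The three cases $|\beta \cap \beta'| \in \{0,1,p\}$ yield the values $1$, $\frac{2(p^2-1)}{p^2}$, and $1 + \frac{p^3-2p}{p^2} = \frac{(p-1)(p+2)}{p}$ respectively, which is exactly the asserted set.

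For the spectrahedron arrangement parameters I would count the number of blocks of the $(p^4, p^2, p+1)$-BIBD using the standard identity $|\cB| = \lambda v(v-1)/[k(k-1)]$; plugging in $v=p^4$, $k=p^2$, $\lambda=p+1$ and simplifying $(p^4-1)/(p^2-1) = p^2+1$ yields $|\cB| = p^2(p^2+1)(p+1)$. Each simplex $\beta$ spans a subspace of dimension $|\beta|-1 = p^2-1$, so the trace-normalized states $W_\beta = P_\beta/(p^2-1)$ live in $\SYM_{p^2(p^2-1)/2}(\bC)$, and their purity is $\tr(W_\beta^2) = (p^2-1)/(p^2-1)^2 = 1/(p^2-1)$, matching the claimed parameters.

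I expect the proof to be almost entirely bookkeeping, since the substantive content is already encoded in Theorems~\ref{thm:AG2}, \ref{thm:bibdsets}, and~\ref{thm:bibddecomp} together with Lemma~\ref{overlappack}. The only point requiring a little care is confirming that $|\beta \cap \beta'| = p^2$ does not occur, which is precisely the simplicity clause of Theorem~\ref{thm:bibddecomp}, and the small algebraic simplification producing $(p-1)(p+2)/p$. Unlike Corollary~\ref{cor:ff2}, no optimality claim needs to be verified here, so no comparison against any Rankin bound is necessary --- the statement is purely descriptive of the three distinct inner-product values produced by the binder.
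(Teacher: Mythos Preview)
Your proposal is correct and follows exactly the same route as the paper, which simply cites Theorems~\ref{thm:AG2} and~\ref{thm:bibddecomp} together with Lemma~\ref{overlappack}; you have merely spelled out the substitutions and arithmetic that the paper leaves implicit. The only minor remark is that your invocation of Theorem~\ref{thm:bibdsets} for the block size is already subsumed by Theorem~\ref{thm:bibddecomp}, but this is cosmetic.
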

\begin{proof}
The result follows from Theorems~\ref{thm:AG2} and~\ref{thm:bibddecomp} as well as Lemma~\ref{overlappack}.
\end{proof}
It is hard to ascertain the potential optimality of  the arrangement in Corollary~\ref{cor:ff3} since $\max_{\beta \neq \beta'} \tr (P_\beta P_\beta')$ does not saturate either Rankin bound.

\section{Mutually unbiased bases from a non-maximal equiangular tight frame} \label{sec:BinderMUB}
It is known that $AG(2,q)$ for $q$ a prime power induces maximal mutually unbiased bases in $\bC^q$ by assigning to each line in the affine geometry a vector  in $\bC^q$ \cite{Zauner1999,Zauner2011,GHW04,Woo06}.  In this set-up, each of the $q+1$ sets of $q$ parallel lines in $AG(2,q)$ are mapped to an orthonormal basis.  The mapping from the geometry over the finite field plane to complex space is not completely trivial, although constructions are known for each prime power $q$.  One such example arises from the Hesse SIC.
\begin{ex}\label{ex:MUB}
One can use the binder of the Hesse SIC $\cG(3)$ ($\Phi_2$ in Example~\ref{ex:hesse}) to construct a maximal set of mutually unbiased bases in $\bC^3$~\cite{DBBA2013}.  Namely, for each $\beta \in \cB(\cG(3))$, we chose $\eta^{(\beta)} \in \bC^3$ to be a unit vector in $\left( \lspan_{(k,\kappa)\in\beta} M_3^\kappa T_3^k \psi\right)^\perp$.  Let $\alpha = \frac{1}{\sqrt{3}}$.   A convenient choice of the $\eta^{(\beta)}$ is
\[
\begin{array}{c||ccc|ccc|ccc|ccc} 
\beta_\ast & 0 &  1 & 2 & 0,0 & 0,1&  0,2 &  1,0 &  1,1 & 1,2 & 2,0 &  2,1&  2,2\\\hline
& 0 & 0 & 1 & \alpha & \alpha & \alpha & \alpha & \alpha & \alpha &\alpha & \alpha & \alpha  \\ 
\eta^{(\beta)} & 1 & 0 & 0 & \alpha & \alpha \zeta_3^2 & \alpha  \zeta_3 & \alpha \zeta_3^2 & \alpha \zeta_3 & \alpha &\alpha \zeta_3 & \alpha & \alpha \zeta_3^2 \\
& 0 & 1 & 0 & \alpha & \alpha \zeta_3 & \alpha \zeta_3^2 & \alpha & \alpha \zeta_3 & \alpha \zeta_3^2 &\alpha & \alpha \zeta_3 & \alpha  \zeta_3^2 
\end{array}.
\]
In this case $\{ \eta^{(\beta_k)} \}_{k \in \bZ_3}$ is the standard orthonormal basis for $\bC^3$, $\{ \eta^{(\beta_{0,b}))} \}_{b \in \bZ_3}$ is the Fourier basis (i.e. normalized columns of $F_3$), and $\{ \eta^{(\beta_{1,b}))} \}_{b \in \bZ_3}$ and $\{ \eta^{(\beta_{2,b}))} \}_{b \in \bZ_3}$ are so-called \emph{chirp bases}.
\end{ex}
Example~\ref{ex:MUB} generalizes for odd prime $p$, where the binder of $\cG(p)$ induces a maximal set of mutually unbiased bases.  In particular, Theorem~\ref{thm:gabMUB} gives a previously unknown connection between non-maximal equiangular tight frames and maximal sets of mutually unbiased bases.
\begin{thm}\label{thm:gabMUB}
Let $p$ be an odd prime. Then the image of the mapping $\cB(\cG(p)) \rightarrow \bC^p$ defined by
\[
\beta \mapsto \eta^{(\beta)}, \quad \eta^{(\beta)}_j = \left\{ \begin{array}{lr}\delta_{k+(p-1)/2,j}; & \beta = \beta_k, k \in \bZ_p \\ \frac{1}{\sqrt{p}} \zeta_p^{(p+1)/2j (aj + 2b +a)}; & \beta=\beta_{a,b}, a,b \in \bZ_p\end{array} \right.
\]
is a maximal set of mutually unbiased bases, where each parallel class in $\cB(\cG(p))$ corresponds to the vectors in one basis.  Furthermore, each $\eta^{(\beta)}$ satisfies
\[
\lspan \eta^{(\beta)} = \left( \lspan_{(k,\kappa) \in \beta, i \in \mathcal{I}} \left\{ M_p^\kappa T_p^k \phi_i \right\}\right)^\perp,
\]
where the $\phi_i$ are the component vectors used to construct the Gabor-Steiner equiangular tight frames (Definition~\ref{defn:psi}).
\end{thm}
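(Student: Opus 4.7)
The plan is to split the statement into three claims and handle each by direct computation, exploiting that $c:=(p+1)/2$ is the multiplicative inverse of $2$ modulo $p$. First I would show that each parallel class of $\cB(\cG(p))=AG(2,p)$ is mapped to an orthonormal basis of $\bC^p$; second, that any two bases arising from distinct parallel classes are mutually unbiased; and third, that each $\eta^{(\beta)}$ lies in the stated orthogonal complement. Because these three properties together imply the MUB statement, the main work is in the explicit exponential sums.

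For the vertical class $\{\beta_k\}_{k\in\bZ_p}$ there is nothing to prove: the map sends $\beta_k$ to a distinct standard basis vector $e_{k+(p-1)/2}$. For a chirp class of slope $a$, I would compute
\[
\langle \eta^{(\beta_{a,b})},\eta^{(\beta_{a,b'})}\rangle=\frac{1}{p}\sum_{j=0}^{p-1}\zeta_p^{c\cdot j\cdot[(aj+2b+a)-(aj+2b'+a)]}=\frac{1}{p}\sum_j \zeta_p^{2c(b-b')j},
\]
and the identity $2c\equiv 1\pmod{p}$ collapses this to the Kronecker delta in $b,b'$, giving an ONB. For two chirp classes of distinct slopes $a\ne a'$, the same manipulation yields
\[
\langle \eta^{(\beta_{a,b})},\eta^{(\beta_{a',b'})}\rangle=\frac{1}{p}\sum_j \zeta_p^{c(a-a')j^2+(b-b'+c(a-a'))\cdot 2j\cdot\ldots},
\]
which is a quadratic Gauss sum with nontrivial leading coefficient $c(a-a')\in\bZ_p^\times$; the classical fact that such a sum has modulus $\sqrt{p}$ delivers the unbiasedness bound $1/\sqrt{p}$. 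Unbiasedness between a chirp class and the vertical class is immediate since every entry of $\eta^{(\beta_{a,b})}$ has modulus $1/\sqrt{p}$.

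For the orthogonality claim, I would use that by Definition~\ref{defn:psi} each $\phi_i$ has support $\{i,p-i-1\}$ with values $\pm 1$, so $(M_p^{\kappa}T_p^{k}\phi_i)_j$ is supported on $j_1=i+k$ and $j_2=-i-1+k$ with values $\zeta_p^{\kappa j_1}$ and $-\zeta_p^{\kappa j_2}$. For $\beta=\beta_k$ the standard basis vector $e_{k+(p-1)/2}$ avoids both $j_1,j_2$ for every $i\in\cI$ because $(p-1)/2\notin\{i,p-i-1\}$ when $0\le i\le(p-3)/2$. For $\beta=\beta_{a,b}$ and $\kappa=ak+b$, the inner product $\langle\eta^{(\beta_{a,b})},M_p^{\kappa}T_p^{k}\phi_i\rangle$ reduces to a two-term expression; factoring out $\zeta_p^{(j_1-j_2)(\cdots)}$ with $j_1-j_2=2i+1\ne 0\pmod p$ and using $j_1+j_2=2k-1$ together with $2c\equiv 1$ causes the bracketed phase to collapse to $ak(1-2c)\equiv 0$. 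Hence the inner product vanishes, and as $|\beta|\cdot|\cI|=p(p-1)/2$ spans a codimension-one subspace (forced by the MUB conclusion), the span equalities follow.

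The main obstacle is cleanly handling the chirp--chirp inner products: one must verify that the resulting quadratic exponential sum has a genuinely nondegenerate leading coefficient modulo $p$ and then invoke the $\sqrt p$-bound for quadratic Gauss sums over $\bZ_p$. Everything else reduces, after completing the square if desired, to the single arithmetic fact that $(p+1)/2$ inverts $2$ modulo $p$, which is exactly the role of the exponent in the definition of $\eta^{(\beta)}$.
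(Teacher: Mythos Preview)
Your approach mirrors the paper's almost exactly: split into (i) each parallel class gives an ONB, (ii) distinct classes are unbiased (quadratic Gauss sum for the chirp--chirp case), and (iii) verify $\eta^{(\beta)}\perp M_p^{\kappa}T_p^{k}\phi_i$ for all $(k,\kappa)\in\beta$, $i\in\cI$. The MUB computations are correct and match the paper's, and your two-term cancellation for the orthogonality in the $\beta_{a,b}$ case, while sketched, is the right mechanism (the paper writes both exponents explicitly and shows they equal $\tfrac{p+1}{2}\,a(i+k)(i-k+1)$).

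There is, however, a genuine gap at the very end. Showing $\eta^{(\beta)}\perp \operatorname{span}\{M_p^{\kappa}T_p^{k}\phi_i\}$ only gives you $\operatorname{span}\eta^{(\beta)}\subseteq(\operatorname{span}\{\cdots\})^{\perp}$; for the claimed equality you must also show that the span on the right has dimension exactly $p-1$ in $\bC^p$. Your justification---``forced by the MUB conclusion''---does not work: the MUB property constrains the inner products among the $\eta^{(\beta)}$ but says nothing about the dimension of $\operatorname{span}_{(k,\kappa)\in\beta,\,i\in\cI}\{M_p^{\kappa}T_p^{k}\phi_i\}$, and the vector count $|\beta|\cdot|\cI|=p(p-1)/2$ in $\bC^p$ is irrelevant to whether that span is codimension one. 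The paper treats this as a separate claim and, for $\beta=\beta_{a,b}$, argues by contradiction using supports: it exhibits two disjoint-support families $\{M_p^{b}\phi_i\}_{i\in\cI}$ and $\{M_p^{a+b}T_p\phi_i\}_{i\in\cI}$ of size $(p-1)/2$ each, and shows that no vector in the second family can lie in the span of the first because any such expansion would force a support mismatch. You need some argument of this type to close the proof.
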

\begin{proof}
We will first show that $\{\eta^{(\beta)}\}_{\beta \in \cB(\cG(p))}\}$ forms a maximal set of mutually unbiased bases with parallel classes corresponding to the constituent orthonormal bases.  The vectors result from an affine re-parameterization of the mutually unbiased basis construction in \cite{WoFi89,Arc05}, but we will include a proof here for completeness.

We recall that $\cB(\cG(p))\cong AG(2,p)$ (Theorem~\ref{thm:AG}) consists of two types of simplices,
\[
\cB_0 = \left\{ \beta_k = \{ (k, \kappa): \kappa \in \bZ_p\} : k \in \bZ_p\right\},
\]
which correspond to the vertical lines in $AG(2,p)$ and
\[
\cB_1  = \left\{\beta_{a,b} = \{ (k, ak+b): k \in \bZ_p\}: a,b \in \bZ_p\right\},
\]
which correspond to the non-vertical lines.  From inspection, one can note that $\{ \eta^{(\beta_k)} \}_{k \in \bZ_p}$ corresponding to the parallel class $\cB_0$ is simply the standard orthonormal basis and for any $k, a, b \in \bZ_p$,
\[
\absip{\eta^{(\beta_k)}}{\eta^{(\beta_{a,b})}} = \abs{\frac{1}{\sqrt{p}} \zeta_p^{\frac{p+1}{2} (k+(p-1)/2) [ a (k+(p-1)/2) + 2b + a]}} = \frac{1}{\sqrt{p}},
\]
as desired.  We now seek to characterize the inner products of the $\eta^{(\beta)}$ corresponding to different $\beta$'s in $\cB_1$.  Namely, let $a,\tilde{a}, b, \tilde{b} \in \bZ_p$.  We compute
\begin{equation}\label{eqn:absum}
\ip{\eta^{(\beta_{a,b})}}{\eta^{(\beta_{\tilde{a},\tilde{b}})}} = \frac{1}{p} \sum_{j \in \bZ_p} \zeta_p^{\frac{p+1}{2}j [(a-\tilde{a})j + 2(b-\tilde{b})]}
\end{equation}
If $a = \tilde{a}$, then 
\[
\eqref{eqn:absum} = \frac{1}{p}\sum_{j \in \bZ_p} \zeta_p^{(b-\tilde{b})j} = \left\{ \begin{array}{lr} 1; & b=\tilde{b} \\ 0; &  b \neq \tilde{b} \end{array}\right..
\]
Thus each parallel class corresponding to a slope $a$ yields an orthonormal basis.  If $a \neq \tilde{a}$, then
\begin{align}
\eqref{eqn:absum} &= \frac{1}{p} \sum_{j \in \bZ_p} \zeta_p^{\frac{p+1}{2} \left[(a-\tilde{a})\left(j+\frac{b-\tilde{b}}{a-\tilde{a}}\right)^2 - \frac{(b-\tilde{b})^2}{a-\tilde{a}}\right]} =\frac{1}{p}  \zeta_p^{-\frac{p+1}{2} \frac{(b-\tilde{b})^2}{a-\tilde{a}}} \sum_{j \in \bZ_p} \zeta_p^{\frac{p+1}{2}(a-\tilde{a})j^2} \label{eqn:gauss1}\\
&= \frac{1}{p}  \zeta_p^{-\frac{p+1}{2} \frac{(b-\tilde{b})^2}{a-\tilde{a}}}  \left\{\begin{array}{lr}\sqrt{p}; & p\equiv 1 \mod 4 \\ \zeta_4\sqrt{p}; & p\equiv 3 \mod 4 \end{array} \right. \label{eqn:gauss2}\\
&\Rightarrow \absip{\eta^{(\beta_{a,b})}}{\eta^{(\beta_{\tilde{a},\tilde{b}})}} = \frac{1}{\sqrt{p}}, \nonumber
\end{align}
where \eqref{eqn:gauss2} is due to the sum on the right hand side of  \eqref{eqn:gauss1} being a quadratic Gauss sum and $\zeta_4$ is a primitive $4$th root of unity depending on the choice of the primitive $p$th root of unity $\zeta_p$.  Thus the $\{ \eta^{(\beta)}\}_{\beta \in \cB(\cG(p))}$ form a maximal set of mutually unbiased bases for $\bC^p$.

We now show that the $\eta^{(\beta)}$ have the desired relationship with the vectors in $\cG(p)$ by demonstrating two facts:
\begin{enumerate}
\item Claim 1: $ \eta^{(\beta)} \in \left( \lspan_{(k,\kappa) \in \beta, i \in \mathcal{I}} \left\{ M_p^\kappa T_p^k \phi_i \right\}\right)^\perp$ and
\item Claim 2: $\dim \lspan_{(k,\kappa) \in \beta, i \in \mathcal{I}} \left\{ M_p^\kappa T_p^k \phi_i \right\} \geq p-1$.
\end{enumerate}

To show the first claim, we consider two cases, $\beta \in \cB_0$ and $\beta \in \cB_1$.  For the former case, it follows from the definition of the $\phi_i$ that for a fixed $k \in \bZ_p$, $\lspan_{\kappa \in \bZ_p, i \in \mathcal{I}} \left\{ M_p^\kappa T_p^k \phi_i \right\}$ consists solely of vectors which are zero at $k+\frac{p-1}{2}$ and thus orthogonal to $\eta^{(\beta_k)}$.  For the latter case, we fix $a, b \in \bZ_p$ and let $i \in \mathcal{I}$, $k \in \bZ_p$ be arbitrary.  Then
\begin{align}
\lefteqn{\sqrt{p} \ip{\eta^{(\beta_{a,b})}}{M_p^{ak+b} T_p^k \phi_i} }\nonumber\\
&= \ip{\left(\zeta_p^{\frac{p+1}{2}j (aj + 2b +a) }\right)_j}{\left(\zeta_p^{(ak+b)(i+k)}\delta_{i+k,j} -\zeta_p^{(ak+b)(-i-1+k)} \delta_{-i-1+k,j}\right)} \nonumber\\
&= \zeta_p^{\frac{p+1}{2} (i+k)(ai+ak+2b+a) -(ak+b)(i+k)} -\zeta_p^{\frac{p+1}{2} (-i-1+k)(-ai-a+ak+2b+a) -(ak+b)(-i-1+k)}\nonumber\\
&=: \zeta_p^{\textrm{EXP1}} - \zeta_p^{\textrm{EXP2}}.\label{eqn:zero}
\end{align}
Computing over $\bZ_p$, we have
\begin{align*}
\textrm{EXP1} &= \frac{p+1}{2} a(i+k)(i-k+1) = \textrm{EXP2};
\end{align*}
thus, \eqref{eqn:zero} $= 0$, independent of choice of $i$ and $k$.

To prove the second claim, we will again consider cases based on whether $\beta$ lies in $\cB_0$ or $\cB_1$.  For $\beta_k \in \cB_0$, 
\[
\dim \lspan_{\kappa \in \bZ_p, i \in \cI} \left\{ M_p^\kappa T_p^k \phi_i\right\} = p-1
\]
by inspection.  Let $\beta_{a,b} \in \cB_1$.  Then $(0,b), (1,b) \in \beta_{a,b}$.  The $(p-1)/2$ vectors $\{ M_p^b \phi_i \}_{i \in \cI}$ have disjoint support and are thus orthogonal.  The same holds for $\{ M_p^{a+b} T_p \phi_i \}_{i \in \cI}$.   We assume by way of contradiction that
\[
\dim \lspan \left( \{ M_p^b \phi_i \}_{i \in \cI} \cup \{ M_p^{a+b} T_p \phi_i \}_{i \in \cI} \right) < p-1.
\]
By symmetry, we can assume that there exists an $\tilde{i} \in \cI$ such that $M_p^{a+b} T_p \phi_{\tilde{i}} \in \lspan_{i \in \cI} M_p^b \phi_i$.  By orthogonality, this implies
\begin{equation}\label{eqn:MTbwo}
M_p^{a+b} T_p \phi_{\tilde{i}} = \sum_{i \in \cI} \ip{M_p^{a+b} T_p \phi_{\tilde{i}}}{M_p^b \phi_i} \frac{M_p^b \phi_i}{\norm{M_p^b \phi_i}}.
\end{equation}
However, due to the support of the vectors, there are only one or two (depending on the exact value of $\tilde{i}$) $i \in \cI$ such that $\ip{M_p^{a+b} T_p \phi_{\tilde{i}}}{M_p^b \phi_i} \neq 0$, where the supports of the one or two $M_p^b \phi_i$ and $M_p^{a+b} T_p \phi_{\tilde{i}}$ are all necessarily different.  This, however, means that the support of the right hand side of \eqref{eqn:MTbwo} is necessarily different from the support of the left hand side, which is a contradiction.  Hence $\dim \lspan_{k \in \bZ_p, i \in \cI} \left\{ M_p^{ak+b} T_p^k \phi_i\right\} \geq p-1$, as desired.
\end{proof}
Such a mapping is called a \emph{quantum net} and yields not only a maximal set of mutually unbiased bases but also a construction of the Wootters-Wigner distribution \cite{GHW04}.  The correspondence of lines in $AG(2,q)$ to specially structured vectors (or, equivalently, pure states) generalizes the fact that marginal probabilities of the continuous (over $\bR$) Wigner function over parallel affine lines in phase space ($\bR \times \widehat{\bR}$) correspond to the probability distribution associated with the observable along that oriented axis \cite{BB87,GHW04}, which has significance for quantum tomography.  By using the trace over $GF(p^s)$ (Definition~\ref{def:trace}), one may generalize the definition of the $\eta^{(\beta)}$ above to $\bC^{p^s}$ \cite{WoFi89,Arc05}; however, the relationship with the vectors in a Gabor-Steiner equiangular tight frame would no longer hold.
\begin{cor}\label{cor:GaborMUB}
Let $m$ be an odd composite number with prime factorization $\prod_i p_i^{e_i}$ and $i^\ast = \argmin_i p_i$ satisfying $e_{i^\ast}=1$.  Further define $\left[ \bZ_m /\langle p_{i^\ast} \rangle \right]$ to be a set of coset representatives in $\bZ_m$ of the cyclic subgroup generated by $p_{i^\ast}$. Then the image of the mapping from an appropriate subset of $\cB(\cG(m))$ to  $\bC^m$ defined by
\[
\beta \mapsto \eta^{(\beta)}, \quad \eta^{(\beta)}_j = \left\{ \begin{array}{lr}\delta_{k+(m-1)/2,j}; & \beta = \beta_k, k \in \bZ_m \\ \frac{1}{\sqrt{m}} \zeta_m^{(m+1)/2j (aj + 2b +a)}; & \beta=\beta_{a,b}, a \in \left[ \bZ_m /  \langle p_{i^\ast} \rangle \right],b \in \bZ_m\end{array} \right.
\]
is a set of $p_{i^\ast}+1$ mutually unbiased bases.  Furthermore, each $\eta^{(\beta)}$ satisfies
\[
\lspan \eta^{(\beta)} = \left( \lspan_{(k,\kappa) \in \beta, i \in \mathcal{I}} \left\{ M_m^\kappa T_m^k \phi_i \right\}\right)^\perp,
\]
where the $\phi_i$ are the component vectors used to construct the Gabor-Steiner equiangular tight frames (Definition~\ref{defn:psi}).
\end{cor}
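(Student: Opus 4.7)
The plan is to mirror the proof of Theorem~\ref{thm:gabMUB}, with $p$ replaced by $m$, while restricting attention to the subset $\{\beta_k:k\in\bZ_m\}\cup\{\beta_{a,b}:a\in[\bZ_m/\langle p_{i^\ast}\rangle],\,b\in\bZ_m\}$ of $\cB(\cG(m))$; each $\beta_{a,b}$ belongs to the binder by Proposition~\ref{prop:ab}. Reading the hypothesis as requiring $p_{i^\ast}$ to be the smallest prime factor of $m$ and of multiplicity one, I would realise $[\bZ_m/\langle p_{i^\ast}\rangle]$ concretely as the integers $\{0,1,\ldots,p_{i^\ast}-1\}$ regarded inside $\bZ_m$. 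These representatives lie in distinct cosets of $\langle p_{i^\ast}\rangle$, and their pairwise differences lie in $\{\pm 1,\ldots,\pm(p_{i^\ast}-1)\}$; each such value is strictly less than every prime factor of $m$, hence coprime to $m$ and a unit in $\bZ_m$.

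With this setup, the within-basis orthogonality (fixed $a$, varying $b$) is immediate from $\sum_{j\in\bZ_m}\zeta_m^{(b-\tilde b)j}=m\,\delta_{b,\tilde b}$, which uses only that $m>1$ is odd. The cross-basis step for distinct slopes $a,\tilde a$ runs through the same completion of the square as in \eqref{eqn:absum}--\eqref{eqn:gauss2}, reducing the inner product to $(1/m)$ times the Gauss sum $\sum_{j\in\bZ_m}\zeta_m^{rj^2}$ with $r=\tfrac{m+1}{2}(a-\tilde a)$. Since $\tfrac{m+1}{2}=2^{-1}$ in $\bZ_m$ and $a-\tilde a$ is a unit by the choice of representatives, $r$ is a unit. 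Invoking the CRT decomposition $\bZ_m\cong\prod_i\bZ_{p_i^{e_i}}$, the Gauss sum factors as $\prod_i\sum_{j_i\in\bZ_{p_i^{e_i}}}\zeta_{p_i^{e_i}}^{r_ij_i^2}$, each factor being a classical quadratic Gauss sum of modulus $\sqrt{p_i^{e_i}}$ at odd prime-power modulus with unit argument. The full sum therefore has modulus $\sqrt m$, and hence $|\ip{\eta^{(\beta_{a,b})}}{\eta^{(\beta_{\tilde a,\tilde b})}}|=1/\sqrt m$, as required for the MUB property.

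The second claim, $\lspan\eta^{(\beta)}=\bigl(\lspan_{(k,\kappa)\in\beta,\,i\in\cI}\{M_m^\kappa T_m^k\phi_i\}\bigr)^\perp$, is obtained by rerunning both sub-arguments of Theorem~\ref{thm:gabMUB} verbatim. The orthogonality identity $\eta^{(\beta)}\perp M_m^\kappa T_m^k\phi_i$ collapses through the same cancellation of exponents in $\bZ_m$ (the computation is a polynomial identity that never uses primality), and the dimension lower bound $\dim\lspan_{(k,\kappa)\in\beta,\,i\in\cI}\{M_m^\kappa T_m^k\phi_i\}\ge m-1$ follows from the same disjoint-support pigeonhole argument, now applied to $\bZ_m$ in place of $\bZ_p$. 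The only nontrivial obstacle is the Gauss-sum modulus estimate at composite modulus, which the CRT factorisation handles cleanly; the phase of the composite Gauss sum differs from the prime case by prime-power Jacobi-symbol type factors, but this phase is immaterial since only the modulus enters the MUB condition.
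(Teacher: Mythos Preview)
Your proposal is correct and follows the same overall strategy as the paper's proof: rerun the argument of Theorem~\ref{thm:gabMUB} with $m$ in place of $p$, identifying the Gauss-sum evaluation as the one step that genuinely changes. The paper's proof is terser---it simply asserts that the Gauss sum has modulus $\sqrt{m}$ exactly when $\gcd(a-\tilde a,m)=1$ and that coset representatives of $\langle p_{i^\ast}\rangle$ achieve this---whereas you supply the CRT factorisation explicitly and pin down a concrete set of representatives $\{0,1,\ldots,p_{i^\ast}-1\}$ together with the reason their pairwise differences are units; both additions are sound and make the argument more self-contained.
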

\begin{proof}
With the exception of the evaluation of the Gauss sum in \eqref{eqn:gauss2}, each argument in the proof of Theorem~\ref{thm:gabMUB} applies after replacing $p$ with $m$.  In order for the Gauss sum on the right hand side of \eqref{eqn:gauss1} to have the correct absolute value of $\sqrt{m}$, $m$ and $a-\tilde{a}$ must be relatively prime.  The largest subsets of $\bZ_m$ such that every pair $a, \tilde{a}$ has a difference relatively prime to $m$ are sets of coset representatives of the largest proper cyclic subgroup, i.e., the one generated by $p_{i^\ast}$ which is isomorphic to $\bZ_{m/p_{i^\ast}}$.
\end{proof}
In a sense, Corollary~\ref{cor:GaborMUB} represents the best one can do with this type of approach.  Namely, if one defines a map from a multiplicatively closed subset of a finite ring of composite order $N=\prod_i p_i^{e_i}$ to $\bC^N$ which is the composition of a two-variable polynomial and a component-wise group homomorphism, one can obtain at most $\min_i p_i^{e_i}$ mutually unbiased bases \cite{Arc05}.  By appending the standard orthonormal basis, one obtains $1 + \min_i p_i^{e_i}$.

\oneappendix 
\begin{acknowledgements}
Part of this work was completed at the Oberwolfach Mini-Workshop in Algebraic, Geometric, and Combinatorial Methods in Frame Theory.
The authors would like to thank Marcus Appleby, Ingemar Bengtsson, Matt Fickus, Markus Grassl, Joey Iverson, John Jasper, Romanos-Diogenes Malikiosis, Chris Manon, and Dustin Mixon for fruitful discussions, some of which took place at the Oberwolfach Mini-Workshop and others at the Summer of Frame Theory at the Air Force Institute of Technology and at the Workshop on Finite Weyl-Heisenberg Groups which was part of the Hausdorff Trimester Program Mathematics of Signal Processing. The authors would also like to thank Alexander Rosa, who sent a hard copy of \cite{MaRo85}, which was not available online. 
\end{acknowledgements}


\affiliationone{
   B.\ G.\ Bodmann\\
Department of Mathematics\\
641 Philip G. Hoffman Hall\\
University of Houston\\     
Houston, Texas 77204-3008 \\
United States of America  
   \email{bgb@math.uh.edu}}
\affiliationtwo{
   E.\ J.\ King\\
   1874 Campus Delivery\\
   111 Weber Bldg\\
   Fort Collins, CO 80523-1874\\
   United States of America  
   \email{emily.king@colostate.edu}}

\end{document}